\documentclass[preprint,12pt]{elsarticle}

\usepackage{amsthm}
\usepackage[english]{babel}
\usepackage{mathtools}
\usepackage{tikz}
\usetikzlibrary{positioning,calc} 
\usetikzlibrary{arrows.meta, shapes.geometric} 
\usetikzlibrary{automata, arrows}
\usepackage[normalem]{ulem}
\usetikzlibrary{fit}  
\usepackage{tikz-cd}
\tikzset{commutative diagrams/.cd}
\tikzcdset{row sep/tiny=0.12 em}
\usepackage{verbatim}
\usepackage{enumitem}
\usepackage{amsfonts}
\usepackage[parfill]{parskip}
\usepackage{amsmath,amsthm,amssymb,epic,eepic,bm}
\usepackage{eqlist,eqparbox}
\usepackage{comment}
\usepackage{caption}
\theoremstyle{plain}
\newtheorem{theorem}{Theorem}[section]
\newtheorem{lemma}[theorem]{Lemma}
\newtheorem{proposition}[theorem]{Proposition}

\theoremstyle{definition}
\newtheorem{definition}[theorem]{Definition}

\newtheorem{remark}[theorem]{Remark}
\newtheorem{example}[theorem]{Example}


\DeclareRobustCommand{\mujmsf}[1]{%
  \ifcat\noexpand#1\relax\mujmsfgreek{#1}\else\mathsf{#1}\fi
}
\makeatletter
\newcommand{\mujmsfgreek}[1]{\csname s\expandafter\@gobble\string#1\endcsname}
\makeatother

\DeclareSymbolFont{sfletters}{OML}{cmbrm}{m}{it}

\DeclareMathSymbol{\salpha}{\mathord}{sfletters}{"0B}
\DeclareMathSymbol{\sbeta}{\mathord}{sfletters}{"0C}
\DeclareMathSymbol{\sgamma}{\mathord}{sfletters}{"0D}
\DeclareMathSymbol{\sdelta}{\mathord}{sfletters}{"0E}
\DeclareMathSymbol{\sepsilon}{\mathord}{sfletters}{"0F}
\DeclareMathSymbol{\szeta}{\mathord}{sfletters}{"10}
\DeclareMathSymbol{\seta}{\mathord}{sfletters}{"11}
\DeclareMathSymbol{\stheta}{\mathord}{sfletters}{"12}
\DeclareMathSymbol{\siota}{\mathord}{sfletters}{"13}
\DeclareMathSymbol{\skappa}{\mathord}{sfletters}{"14}
\DeclareMathSymbol{\slambda}{\mathord}{sfletters}{"15}
\DeclareMathSymbol{\smu}{\mathord}{sfletters}{"16}
\DeclareMathSymbol{\snu}{\mathord}{sfletters}{"17}
\DeclareMathSymbol{\sxi}{\mathord}{sfletters}{"18}
\DeclareMathSymbol{\spi}{\mathord}{sfletters}{"19}
\DeclareMathSymbol{\srho}{\mathord}{sfletters}{"1A}
\DeclareMathSymbol{\ssigma}{\mathord}{sfletters}{"1B}
\DeclareMathSymbol{\stau}{\mathord}{sfletters}{"1C}
\DeclareMathSymbol{\supsilon}{\mathord}{sfletters}{"1D}
\DeclareMathSymbol{\sphi}{\mathord}{sfletters}{"1E}
\DeclareMathSymbol{\schi}{\mathord}{sfletters}{"1F}
\DeclareMathSymbol{\spsi}{\mathord}{sfletters}{"20}
\DeclareMathSymbol{\somega}{\mathord}{sfletters}{"21}
\DeclareMathSymbol{\svarepsilon}{\mathord}{sfletters}{"22}
\DeclareMathSymbol{\svartheta}{\mathord}{sfletters}{"23}
\DeclareMathSymbol{\svarpi}{\mathord}{sfletters}{"24}
\DeclareMathSymbol{\svarrho}{\mathord}{sfletters}{"25}
\DeclareMathSymbol{\svarsigma}{\mathord}{sfletters}{"26}
\DeclareMathSymbol{\svarphi}{\mathord}{sfletters}{"27}

\newcommand{\leqs}{\leqslant}

\newcommand{\FSUPH}{{\mbox{$F\kern -.85ex-\kern -.35ex\mathbb{S}$}}}
\newcommand{\FSUPL}{{\mbox{$F\kern -.85ex-\kern -.35ex\mathbb{S}_\leqs$}}}
\newcommand{\smFSUPL}{{{F\kern -.35ex-\kern -.08ex\mathbb{S}_\leqs}}}
\newcommand{\FINFH}{{\mbox{$F\kern -.85ex-\kern -.35ex\mathbb{M}$}}}
\newcommand{\FINFL}{{\mbox{$F\kern -.85ex-\kern -.35ex\mathbb{M}_\leqs$}}}
\newcommand{\VSUPH}{{\mbox{${\mathbf V}\kern -.85ex-\kern -.35ex\mathbb{S}$}}}
\newcommand{\VJ}{{\mbox{${\mathbf V}\kern -.85ex-\kern -.35ex\mathbb{J}$}}}
\newcommand{\VMod}{{\mbox{$V\kern -.85ex-\kern -.35ex\mathbf{Mod}$}}}
\newcommand{\VFSUPH}{{\mbox{${\mathbf V}\kern -.85ex-\kern -.35ex {\textnormal F}\kern -.85ex-\kern -.35ex\mathbb{S}$}}}
\newcommand{\VFSUPL}{{\mbox{${\mathbf V}\kern -.85ex-\kern -.35ex {\textnormal F}\kern -.85ex-\kern -.35ex\mathbb{S}_\leqs$}}}
\newcommand{\VSUPLF}{{\mbox{${\mathbf V}\kern -.85ex-\kern -.35ex F\kern -.85ex-\kern -.35ex\mathbb{S}_\leqs$}}}
\newcommand{\dvaSUPH}{{\mbox{${\mujmsf{2}}\kern -.85ex-\kern -.35ex\mathbb{S}$}}}
\newcommand{\indexVFSUPH}{{{\mathbf V}\kern -.485ex-\kern -.1735ex {\textnormal F}\kern -.485ex-\kern -.1735ex\mathbb{S}}}
\newcommand{\indexVFSUPL}{{{\mathbf V}\kern -.485ex-\kern -.1735ex {\textnormal F}\kern -.485ex-\kern -.1735ex\mathbb{S}_\leqs}}
\makeatletter
\newcommand{\firstarrow}[2][default]{
  \expandafter\gdef\csname arrow@content@#1\endcsname{#2}
  \xrightarrow{\@nameuse{arrow@content@#1}}
}
\newcommand{\alignarrow}[1][default]{\xrightarrow{\hphantom{\@nameuse{arrow@content@#1}}}}
\makeatother

\newcommand{\homs}[1]{\textnormal{Hom}_{\mathbf V}{#1}}
\newcommand{\Vmodule}{${\mathbf V}$-module{}}
\newcommand{\Vsubmodule}{${\mathbf V}$-submodule{}}
\newcommand{\Vrelation}{${\mathbf V}$-relation{}}
\newcommand{\Vframe}{${\mathbf V}$-frame{}}
\newcommand{\VFsemilattice}{{$\mathbf V-\textnormal{F}$}-semilattice}
\newcommand{\VFcongruence}{{$\mathbf V-\textnormal{F}$}-congruence}
\newcommand{\VFprenucleus}{{$\mathbf V-\textnormal{F}$}-prenucleus}
\newcommand{\VFnucleus}{{$\mathbf V-\textnormal{F}$}-nucleus}
\newcommand{\VFnuclei}{{$\mathbf V-\textnormal{F}$}-nuclei}
\newcommand{\VFprenuclei}{{$\mathbf V-\textnormal{F}$}-prenuclei}

\newcommand{\arw}[1]{{\firstarrow{#1}}}

\newcommand{\adjunction}[4]{(#1, #2) \colon #3 \dashv #4 \colon}
\newcommand{\adjpair}{\rightleftarrows}
\newcommand{\adjpairop}{\leftleftarrows}

\begin{document}

\begin{frontmatter}
\title{Many-valued aspects of tense an related operators}


\author[label2]{Michal Botur}
	\author[label1]{Jan Paseka} \author[label1]{Richard Smolka}
	


\affiliation[label1]{organization={Department of Mathematics and Statistics}, 
            addressline={Faculty of Science, Masaryk University, Kotlářská 2}, 
            city={Brno},
            postcode={CZ-611 37}, 
            country={Czech Republic}}

\affiliation[label2]{organization={Department of Algebra and Geometry}, 
            addressline={Faculty of Science, Palacký University Olomouc, 17.\ listopadu 12}, 
            city={Olomouc},
            postcode={CZ-771 46}, 
            country={Czech Republic}}            

\begin{abstract}
Our research builds upon Halmos's foundational work on functional monadic Boolean algebras and our previous work on tense operators to develop three essential  constructions, including the important concepts of fuzzy sets and powerset operators. These constructions have widespread applications across contemporary mathematical disciplines, including algebra, logic, and topology. The framework we present generates four covariant and two contravariant functors, establishing three adjoint situations.
\end{abstract}

\begin{graphicalabstract}

\vskip3cm

\begin{center}
\resizebox{\textwidth}{!}{%
\begin{tikzpicture}[->,>=stealth',shorten >=1pt,auto,thick,
  main node/.style={circle,draw,font=\sffamily\Large\bfseries}]
  \node[main node,fill=green,font=\normalsize] (s1) at (0,0) {\begin{tabular}{c}
  \bf Quantale\\ \bf modules
  \end{tabular}};
  \node[main node,fill=yellow,font=\normalsize] (s2) at (8,0) {\begin{tabular}{c}
  \bf Tense fuzzy\\ \bf algebras
  \end{tabular}};
  \node[main node,,fill=brown,font=\normalsize] (s3) at (4,-3) {\begin{tabular}{c}
  \bf Fuzzy\\ \bf relations
  \end{tabular}};

  \draw[->, blue!50] (s1) to[bend left] (s2);
  \draw[->, blue!50] (s1) to[bend right] (s3);
  \draw[->, blue!50] (s2) to[bend left=12] (s1);
  \draw[->, red!50]  (s2) to[bend right=12] (s3);
  \draw[->, blue!50] (s3) to[bend right=12] (s1);
  \draw[->, red!50]  (s3) to[bend right] (s2);
  
  \node at (4,2) {\textcolor{blue}{$(-)^{\mathbf J}$}};          
  \node at (3.2, -1.2) {\textcolor{blue}{\phantom{i}$\mathbf J\otimes (-)$}};     
  \node at (4, -0.2) {\textcolor{blue}{$(-)\otimes\mathbf H$}};      
  \node at (7.75, -2.3) {\textcolor{red}{\phantom{ii}$\mathbf J[(-), \mathbf L]$}}; 
  \node at (0.23, -2.3) {\textcolor{blue}{$\mathbf J[\mathbf H, (-)]$\phantom{ii}}}; 
  \node at (5.19, -1.2) {\textcolor{red}{${\mathbf L}^{(-)}$\phantom{iii}}};          
\end{tikzpicture}}
\end{center}

\end{graphicalabstract}


\begin{keyword}



\end{keyword}

\end{frontmatter}

	\section{Introduction}\label{sec:intro} 

Fuzzy logic is a form of many-valued logic that extends beyond traditional binary (true/false) logic. Some situations may benefit from a wider range of possibilities, allowing for statements to be partially true or false to varying degrees. Fuzzy logic uses a complete lattice of truth values, where $0$ represents completely false and $1$ represents completely true, with various levels in between.


 The next step is to extend this complete lattice with additional operations, including (some or all) logical operations such as conjunctions, implications (in the case of non-commutative logics, with two implications), negations, or others. We consider the fundamental structure for all theories to be a complete residuated lattice \cite{gajiko}, which is an  algebra ${\mathbf V}=(V,\vee,\wedge,\cdot,/,\backslash, e)$ of the type $\langle 2,2,2,2,2,0\rangle$ where $(V,\vee,\wedge)$ is a complete lattice, $(V,\cdot,e)$ is a monoid and the adjoint property $$x\cdot y\leq z \text{ if and only if }x\leq z/y \text{ if and only if }y\leq x\backslash z$$ holds.

 It is, of course, well known that residua operations, if they exist, are determined uniquely by 
 $$
 \begin{aligned}
     x\backslash y &= \max\{z\in L\mid x\cdot z\leq y\}\\
     y/ x &= \max\{z\in L\mid z\cdot x\leq y\}.
 \end{aligned}
$$
Moreover, residual operations $/$ and $\backslash$ exist if and only if the following conditions hold for all $x,y_i\in V; i\in I:$ 
 $$
 \begin{aligned}
x\cdot \bigvee_{i\in I}y_i &= \bigvee_{i\in I}(x\cdot y_i)\\
(\bigvee_{i\in I}y_i)\cdot x &= \bigvee_{i\in I}( y_i\cdot x).
  \end{aligned}
$$
 
 In addition to the well-known fact that the infimum operation is determined by the supremum operation by
 $$\bigwedge_{i\in I}x_i=\bigvee\{x\in L\mid  (\forall i\in I)(x\leq x_i)\}$$
 we can define a residuated lattice using the structure of an algebra 
 ${\mathbf V}=(V,\bigvee, \otimes, e)$ where $(V,\bigvee)$ is a complete join semilattice (shortly $\bigvee$-semilattice), $(V,\otimes,e)$ is a monoid and the following equalities hold: 
 $$
 \begin{aligned}
x\otimes \bigvee_{i\in I}y_i &= \bigvee_{i\in I}(x\otimes y_i)\\
(\bigvee_{i\in I}y_i)\otimes x &= \bigvee_{i\in I}( y_i\otimes x).
  \end{aligned}
$$

These algebras, known as {\em unital quantales} \cite{KP}, are already established in theoretical mathematics. They have origins in pointfree topology (also known as locale theory) and have applications in quantum logic. Unital quantales provide a generalized algebraic framework that connects these seemingly disparate areas of mathematics, 
theoretical computer science and physics.

This paper studies special kinds of morphisms and operators inspired by tense operators, which are a special case of modal operators. Tense operators were first introduced on Boolean algebras to add tenses to the language of standard propositional logic (see for example \cite{Kowalski}). The idea of these operators is based on the so-called time frame $(T,\rho)$, which is the set of "instants" together with the relation $\rho$ expressing the temporal sequence. We interpret the expression $i\rho j$ 
meaning that instant $i$ precedes instant $j$ or, dually, that instant $j$ 
follows instant $i$.

 If $\mathbf L$ is a complete lattice representing a scale of truth values (e.g., in the case of Boolean algebras a two-element lattice, in the case of MV-algebras an interval $[0,1]$ of real numbers, etc.) then ${L}^T$ represents a set of propositions with a temporal dimension. We interpret the element $x\in L^T$ as a proposition 
 whose truth value at instant $i\in T$ is $x(i)$.

Given this temporal dimension, we can define operators $F$ and $P$ on the set $L^T$ having the following meaning: 
\begin{itemize}
    \item[-] $Fx$ means "(sometimes) $x$ will be true",
    \item[-] $Px$ means "the statement $x$ was (sometimes) true".
\end{itemize} 
We formally define these operators by
$$(Fx)(i)=\bigwedge_{i\rho k}x(k),\,\, (Px)(j)=\bigwedge_{l\rho j}x(l),$$
where $i$ and $j$ represent time instants.

The operators $G$ and $H$ are dually defined on the set $L^T$ such that: 
\begin{itemize}
    \item[-] $Gx$ means "the statement $x$ will always be true in the future,"
    \item[-] $Hx$ means "the statement $x$ has always been true in the past". 
\end{itemize}
 Formally, we introduce these operators by
 $$(Fx)(i)=\bigvee_{i\rho k}x(k),\,\, (Gx)(j)=\bigvee_{l\rho j}x(l).$$
 These operators have been extensively studied from various perspectives, including algebraic, logical, and computational. 
For Boolean algebras, see \cite{Kowalski}, for MV-algebras, see \cite{Georgescu}, \cite{BotPa}, and others \cite{FiPeSa}.

The final inspiration for our main concept and the focus of this paper comes from the fuzzification of binary relations, and consequently, of time frames, as studied  in  \cite{BotM}. In this context,  a fuzzy binary relation on  set $T$ 
 is represented as  $r\colon T^2\longrightarrow L$, where $r(i,j)$ expresses the truth value of the statement "the instant $i$ precedes the instant  $j$". 
 
 For a unital quantale  $(V, \bigvee, \otimes,e)$  we can 
 define an operator $F\colon V^T\longrightarrow V^T$ by $$(Fx)(i)=\bigvee_{i\in T}r(i,j)\otimes x(j).$$ 

 The same procedure applies if we substitute ${\mathbf V}$ with a module structure 
 over ${\mathbf V}$.

It's worth noting that this definition reduces to the earlier one in the case of a boolean binary relation.

These types of mappings have numerous interpretations in non-classical logic theory and serve as important tools. For further details on these operators, we direct the reader to works such as \cite{capebo,chapa,goje}.

In this paper, we aim to demonstrate that by translating this theory into the language of quantales, we uncover a rich functorial background. This translation not only clarifies the existing theory but also provides deeper insights.

The structure of this paper is as follows. 
After this introduction, Section \ref{Preliminaries} introduces the necessary theory to extend the fundamental constructions from \cite{Bot} to arbitrary unital quantales, broadening their applicability beyond the specific case of a trivial, 2-element quantale. We will define a generalized concept of a time frame $(T,\rho)$, namely a \Vframe{} $(T,r)$, and prove essential technical lemmas. 

Section \ref{Basic constructions and functorial theorems} 
establishes the core principles and structures that will underpin our subsequent research. We begin with a key assumption: $\mathbf V$ is taken to be a commutative unital quantale throughout our work.

First, we derive three fundamental constructions.

Namely, we construct: 
\begin{enumerate}[label=\textup{(\roman*)}]
    \item a \Vmodule\ $\mathbf{L}^{\mathbf{J}}$ with a unary operation 
    $F$ (called shortly an \emph{\VFsemilattice}) from a \Vmodule\ $\mathbf{L}$
    and a fuzzy relation $\mathbf{J}$ (called a \emph{\Vframe}), 
    
    \item a \Vmodule\  $\mathbf{J} \otimes \mathbf{H}$ from a {\Vframe} $\mathbf{J}$ and 
    a {\VFsemilattice} $\mathbf{H}$, and  
    
    \item a {\Vframe} $\mathbf{J}[\mathbf{H}, \mathbf{L}]$ from a {\VFsemilattice} $\mathbf{H}$ 
    and a \Vmodule\   $\mathbf{L}$. 
\end{enumerate}

These constructions will play a pivotal role in our forthcoming discussions and analyses.

Visual representation of this situation is depicted in the picture below, where blue lines correspond to covariant functors and red lines correspond to contravariant functors: 

\begin{center}
\begin{tikzpicture}[->,>=stealth',shorten >=1pt,auto,thick,
  main node/.style={circle,draw,font=\sffamily\Large\bfseries}]
  \node[main node,font=\normalsize] (s1) at (0,0) {\VSUPH};
  \node[main node,font=\normalsize] (s2) at (8,0) {\VFSUPL};
  \node[main node,font=\normalsize] (s3) at (4,-3) {\VJ};

  \draw[->, blue!50] (s1) to[bend left] (s2);
  \draw[->, blue!50] (s1) to[bend right] (s3);
  \draw[->, blue!50] (s2) to[bend left=12] (s1);
  \draw[->, red!50]  (s2) to[bend right=12] (s3);
  \draw[->, blue!50] (s3) to[bend right=12] (s1);
  \draw[->, red!50]  (s3) to[bend right] (s2);
  
  \node at (4,1) {\textcolor{blue}{$(-)^{\mathbf J}$}};          
  \node at (3, -1.2) {\textcolor{blue}{\phantom{i}$\mathbf J\otimes (-)$}};     
  \node at (4, -0.2) {\textcolor{blue}{$(-)\otimes\mathbf H$}};      
  \node at (7.5, -2.2) {\textcolor{red}{\phantom{ii}$\mathbf J[(-), \mathbf L]$}}; 
  \node at (0.3, -2.0) {\textcolor{blue}{$\mathbf J[\mathbf H, (-)]$\phantom{ii}}}; 
  \node at (5, -1.5) {\textcolor{red}{${\mathbf L}^{(-)}$\phantom{iii}}};          
\end{tikzpicture}
\end{center}

Here \VSUPH\ is the category of \Vmodule{}s, 
{\VJ} is the category of {\Vframe}s, and 
{\VFSUPL} the category of {\VFsemilattice}s.

Extending the results of \cite[Section 4]{Bot}, we establish three adjoint situations tailored to quantale-valued structures:
\begin{enumerate}
    \item[(i)] $\adjunction{\eta}{\varepsilon}{\mathbf{J} \otimes -}{(-)^{\mathbf{J}}}$ between categories:
    \[
    \VSUPH \adjpair \VFSUPL
    \]
    
    \item[(ii)] $\adjunction{\varphi}{\psi}{- \otimes \mathbf{H}}{\mathbf{J}[\mathbf{H}, -]}$ forming the correspondence:
    \[
    \VSUPH \adjpair \VJ
    \]
    
    \item[(iii)] $\adjunction{\nu}{\mu}{\mathbf{J}[-, \mathbf{L}]}{\mathbf{L}^{-}}$ inducing the duality:
    \[
    \VJ \adjpairop \VFSUPL
    \]
\end{enumerate}

To concretize some of these adjoint pairs, Section~\ref{Examples} develops a  prototypical example. Finally, Section~\ref{conclusion} synthesizes our key findings and discusses their categorical implications.

For unexplained concepts and terminology, please consult references \cite{Joy of cats}, \cite{KP}, and \cite{ZL3}. While we have strived to make this paper self-contained, space limitations have necessitated omitting certain details, which readers will need to explore independently.

	\section{Preliminaries}\label{Preliminaries}
	
In this section, we will introduce all the necessary theory which we will need to extend the fundamental constructions we have used in \cite{Bot} to accommodate arbitrary unital quantales, broadening their applicability beyond a specific case of a trivial, 2-element quantale. The notion of quantales and \Vmodule{}s \cite{KP} play a crucial role in quantum logic, where quantales serve as algebraic models for reasoning about propositions with continuous truth values. 

By providing a mathematical framework for generalizing classical logical operations to systems with infinite or continuous states, these constructions facilitate the study of quantum phenomena. Overall, while quantales enable the modeling of continuous truth values inherent in quantum propositions, \Vmodule{}s formalize the linear structure of quantum states and operators. 

Moreover, the connection between quantale modules and fuzzy logic provides a rich mathematical framework for studying and developing fuzzy logical systems. It allows for the application of algebraic and category-theoretic methods to fuzzy logic, potentially leading to new insights and applications in areas such as artificial intelligence, decision theory, and control theory.

\subsection{Notation}

 We will use the standard category-theoretic notation for composition of maps, that is, 
for maps $f\colon A\longrightarrow B$ and $g\colon B\longrightarrow C$
we denote their composition by $g\circ f\colon A\longrightarrow C$, so that
$(g\circ f)(a) = g(f(a))$ for all $a\in A$. The set of all
maps from the $A$ to $B$ we denote by the usual $B^A$. For a map
$f\colon A\longrightarrow B$ and a set $I$ we write
$f^I\colon A^I\to B^I$ for the map defined by $f^I(x)(i)=f(x(i))$ evaluated on $i \in I$.\\

The algebraic foundation for our work rests on specific structures, which we now introduce. We begin by recalling the definition of a \emph{unital quantale}: a complete lattice with an associative multiplication distributing over arbitrary joins and a multiplicative unit. This concept is fundamental in both nonclassical logics and theoretical computer science.

	\begin{definition}\label{keyquantale}
	A {\em unital quantale} is a 4-tuple $\mathbf V=(V,\bigvee, \otimes,e)$, where $(V,\bigvee)$ is a $\bigvee$-semilattice, $e\in V$ 
	and $\otimes$ is a binary operation on $V$ satisfying:
	\begin{itemize}
		\item[](V1) $a \otimes (b \otimes c) = (a \otimes b) \otimes c$ for all $a,b,c\in V$ (associativity).\\
		\item[](V2) $a \otimes (\bigvee S)=\bigvee_{s\in S}(a \otimes s)$ for every $S\subseteq V$ and every $a \in V$.\\
		\item[](V3) $(\bigvee S)\otimes a=\bigvee_{s\in S}(s\otimes a)$ for every $S\subseteq V$ and every $a \in V$.\\
		\item[](V4) $a \otimes e = e \otimes a=a$ for all $a\in V$ (unitality).\\
	\end{itemize}
	\end{definition}

A unital quantale $\mathbf V$ is called {\em commutative} if 
$u\otimes v=v\otimes u$ for all $u, v\in V$.

A quantale in general is a pair $\mathbf V=(V,\bigvee, \otimes)$, which doesn't have to satisfy unitality. Throughout this work, we will only consider unital quantales. For convenience, we may sometimes denote a unital quantale simply as $V$ instead of $\mathbf V$, but from the context, the meaning will always be clear.

\begin{example}
  \begin{itemize}[label=\textbullet, leftmargin=*]
   \item $\mujmsf{2}=(\{\bot_{\mujmsf{2}},\top_{\mujmsf{2}}\},\vee,\wedge, \top_{\mujmsf{2}})$ is a two-element unital quantale.
   \item $\mujmsf{P}_{+}=({[0,\infty]},\bigwedge, +,0)$ is a unital quantale given by the extended real half-line $[0,\infty]$ with the dual of the standard partial order.
  \end{itemize}
 \end{example}

 \begin{remark}
  \begin{itemize}[label=\textbullet, leftmargin=*]
   \item Every unital quantale $\mathbf V$ is a strict monoidal closed category.
   \item For every element $u\in V$, the map $V\arw{-\otimes u}V$ has a right adjoint map $V\arw{\homs(u,-)}V$ ($v\otimes u\leqs w$ iff
         $v\leqs\homs(u, w)$).
  \end{itemize}
 \end{remark}

We now define \emph{quantale modules}. A quantale module is a $\bigvee$-semilattice equipped with an action of a unital quantale, generalizing scalar multiplication in vector spaces. Notably, every $\bigvee$-semilattice is naturally a module over the two-element quantale $\mujmsf{2}$, allowing us to connect classical order-theoretic structures to our broader quantale framework.

	\begin{definition}\label{keymodule} Given a unital quantale $\mathbf V=(V,\bigvee, \otimes, e)$, a \textit{left ${\mathbf V}$-module} 
		is a triple $\mathbf A=(A,\bigvee,*)$ such that $(A,\bigvee)$ is a $\bigvee$-semilattice and $*\colon V\times A\longrightarrow A$ is a map satisfying:
	\begin{itemize}
		\item[](A1) $v * (\bigvee S)=\bigvee_{s\in S}(v * s)$ for every $S\subseteq A$ and every $v \in V$.\\
		\item[](A2) $(\bigvee T)* a=\bigvee_{t\in t}(t* a)$ for every $T\subseteq V$ and every $a \in A$.\\
		\item[](A3) $u*(v*a)=(u\otimes v)*a$ for every $u,v \in V$ and every $a\in A$.\\
		\item[](A4) $e * a=a$ for all $a\in A$ (unitality).\\
	\end{itemize}
		\end{definition}

A right \Vmodule{} can be defined in an analogical way and whenever we mention a 
\Vmodule{} in this work again, we will implicitly mean a left \Vmodule{} from the previous definition over a fixed but arbitrary unital quantale $\mathbf{V}=(V,\bigvee, \otimes, e)$.\\

\begin{example}
    Every quantale $\mathbf V=(V,\bigvee, \otimes,e)$ induces a \Vmodule{}, where the underlying set is the underlying set of $\mathbf V$ itself, namely $V$, the action $*$ is defined as $*:=\otimes$. In other words, the quantale $\textbf{V}=(V,\bigvee, \otimes,e)$ is itself a \Vmodule{} of the same form, as then the conditions for the \Vmodule{} and the quantale coincide.
\end{example}

\begin{definition}Let $\mathbf V=(V,\bigvee, \otimes,e) $ be a unital quantale. Given two left {\Vmodule{}s} $\mathbf A=( A,*_A)$, $\mathbf B=(B,*_B)$, a map $f:A\longrightarrow B$ is then called a \textit{left \Vmodule{} homomorphism} provided that it preserves all joins and $f(v*_Aa)=v*_Bf(a)$ for every $a\in A$ and every $v\in V$. 
	
	We denote by \VSUPH\ the category of (left) {\Vmodule{}s} and 
	(left) \Vmodule{} homomorphisms.
		\end{definition}
	
	To build a unifying categorical framework for various algebraic and logical concepts, we also introduce the related notions of \Vframe{}s (frames over a set with a \(\mathbf{V}\)-valued relation) 
    and \VFsemilattice{}s (\Vmodule{}s with an additional  endomorphism of modules). These definitions will be employed extensively throughout the paper.
	
		\begin{definition}\label{keyframe} Let $\mathbf V$ be a unital quantale. A \textit{\Vrelation{}} 
			$r$ from set $X$ to set $Y$ is a map 
   $r\colon X\times Y \longrightarrow V$ and  
			a \textit{\Vframe{}} over a set $T$ is a pair $(T,r)$ where $r$ is a map $r\colon T\times T\longrightarrow V$.
	
	\end{definition}
\begin{definition}Given two {\Vframe{}s} $(T,r)$ and $(S,s)$, 
a map $f\colon T\longrightarrow S$ is called 
	a \textit{\Vframe{} homomorphism} if it satisfies $r(i,j))\leq s(f(i),f(j))$ for every $i,j\in T$. We denote the category of \Vframe{}s as $\VJ$.
\end{definition}

	\begin{definition}\label{keyFsemi} Let $\mathbf V$ be a unital quantale. A \textit{\VFsemilattice{}} 
		is a pair $(\mathbf A,F)$ where $\mathbf A=(A,\bigvee, *)$ 
        is a {\Vmodule{}} and 
		$F\colon A \longrightarrow  A$ is a \Vmodule{} homomorphism.
\end{definition}

\begin{definition}	
	Given  two {\VFsemilattice{}s} $(\mathbf A_1,F)$ and $(\mathbf A_2,H)$, a \Vmodule{} 
	homomorphism $f:\mathbf A_1\longrightarrow \mathbf A_2$ is called 
	\begin{enumerate}[label=(\roman*)]
	\item a {\em \VFsemilattice{} homomorphism between $(\mathbf A_1,F)$ and $(\mathbf A_2,H)$} if it satisfies $H(f(a))=f(F(a))$ for any $a\in  A_1$, 
 \item a {\em lax morphism between  \VFsemilattice{}s $(\mathbf A_1,F)$ and $(\mathbf A_2,H)$} if it satisfies $H(f(a))\leq f(F(a))$ for any $a\in  A_1$.
	\end{enumerate}

We denote by \VFSUPH\ the category of {\VFsemilattice{}s} and 
\VFsemilattice{} homomorphisms and by \VFSUPL\ 
the category of {\VFsemilattice{}s} and lax morphisms.
 
\end{definition}

Recall that \VFSUPH\ is a subcategory of \VFSUPL{} and \VFSUPH\ is  
 an  equationally presentable category. Hence it has all small limits \cite[proposition 2.2.1]{KP}. These limits are 
constructed exactly as in the category of all sets. 

\begin{remark} If $\mathbf V=\mujmsf{2}$ then 
$\mujmsf{2}$-modules are exactly $\bigvee$-semilattices, 
    $\mujmsf{2}$-frames are exactly time frames and 
    $\mujmsf{2}-\textnormal{F}$-semilattices are exactly 
    $\textnormal{F}$-$\bigvee$-semilattices as studied in \cite{Bot}. 
    The seamless integration of classical and generalized contexts within the quantale approach serves to emphasize its natural character.
\end{remark}

\subsection{Quotients in \VSUPH{} and \VFSUPL{}}

In this subsection, we establish key technical foundations necessary for our later findings. We introduce and examine concepts including congruences on \Vmodule{}s and 
\VFsemilattice{}s, alongside the related notions of \VFprenuclei{} and \VFnuclei. These theoretical structures enable the formation of quotient structures and provide connections between congruences and closure operators within the context of sup-semilattices that carry a quantale action.

Specifically, for a given \Vmodule{} \(\mathbf{A}\) (where \(\mathbf{V}\) is a unital quantale), we define a congruence as an equivalence relation on \(A\) respecting both joins and the module action. We extend this to \VFsemilattice{}s, which feature an additional endomorphism \(F\). Furthermore, we introduce 
\VFprenuclei{} and demonstrate that their fixed point substructure (with induced operations) forms a \(\mathbf{V}\)-module and, indeed, a \VFsemilattice{}.
    

Let $\mathbf A$ be a \Vmodule{} where $\mathbf V$ is a unital quantale. 	
A { \em  congruence on the  \Vmodule{}}  $\mathbf A$ is 
	an equivalence relation $\theta$ on $A$ satisfying 
 \begin{enumerate}
	\item $\{(x_i, y_i)\mid i\in I\}\subseteq \theta$ implies  
	$(\bigvee_{i\in I}x_i)\theta (\bigvee_{i\in I}y_i)$, and
 \item for every $v \in V$ 
	and $(x, y)\in \theta$,  $(v*x, v*y)\in \theta$.
 \end{enumerate}
 Let us denote the set of all congruences on $\mathbf A$ as 
	$\mathrm{Con}\, \mathbf A$.

	If $\mathbf A$ is a \Vmodule{} and $\theta$ a \Vmodule{} congruence on $\mathbf  A$, the factor set
	$A/\theta$ is a \Vmodule{} again, and the projection 
	$\pi\colon A\to  A/\theta$ is, therefore, a \Vmodule{} morphism.
	Recall that if $\theta _i\in \mathrm{Con}\, \mathbf A$ for all $i\in I$, then also 
	$\bigcap _i \theta _i \in \mathrm{Con}\, \mathbf A$.
	
	A { \em  \VFcongruence{} on the \VFsemilattice}  $({\mathbf A},F)$ is 
	a congruence $\theta$ on $\mathbf A$ satisfying $a\theta b$ implies 
	$F(a)\theta F(b)$ for all $a, b\in A$. Note that if $F=\mbox{\rm id}_A$, then any congruence  on $\mathbf A$ is 
	a \VFcongruence{}  on  $({\mathbf A},\mbox{\rm id}_A)$. 

 A { \em  \VFprenucleus{} on the \VFsemilattice}  $({\mathbf A},F)$ is an 
 operator $j\colon A\to A$ such that, for every $v\in V$ and $a, b\in A$,
 \begin{enumerate}[label=({N\arabic*}), ref=\emph{\alph*}]
        \item $a\leq j(a)$,
	\item $a\leq b$ implies $j(a)\leq j(b)$,
        \item $v*j(a)\leq j(v*a)$,
        \item $F(j(a))\leq j(F(a))$.
 \end{enumerate}

We put 
$$
\begin{aligned}
    &A_j=\{a\in A \mid j(a)=a\},\ \leq_{{\mathbf A}_j}=\leq \cap (A_j\times A_j),\ 
    \mbox{$\bigvee_{{\mathbf A}_j}=j\circ \bigvee$},\\ 
    &*_{{\mathbf A}_j}=j\circ *\ 
    \text{and}\ F_{{\mathbf A}_j}=j\circ F.
\end{aligned}
$$
A { \em  \VFnucleus{} on the \VFsemilattice}  $({\mathbf A},F)$ is a 
 \VFprenucleus{} $j\colon A\to A$ such that, for every  $a\in A$,
 \begin{enumerate}
	\item[(N5)] $j(a)= j(j(a))$.
 \end{enumerate}

\begin{lemma}\label{lemnucleus}
	Let  $j$ be a nucleus on a \VFsemilattice{} $({\mathbf A},F)$.
	Then $\mathbf{A}_j=(A_j,\bigvee_{{\mathbf A}_j}, *_{{\mathbf A}_j})$ is  a \Vmodule{} and $(\mathbf{A}_j, F_{{\mathbf A}_j})$  is 
 a \VFsemilattice{}. Moreover, 	 
	the surjection $j\colon A\to {A_j}$ is a homomorphisms of \VFsemilattice{}s. 
\end{lemma}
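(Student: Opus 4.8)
The plan is to isolate two \emph{absorption} identities that encode the nontrivial axioms (N3) and (N4), and then reduce every module- and $F$-axiom on $\mathbf{A}_j$ to the corresponding axiom already known to hold in the ambient structure $\mathbf{A}$. Throughout I would use that a nucleus is in particular a closure operator (by (N1), (N2), (N5)). Since $\mathbf{A}$ is a $\bigvee$-semilattice it is a complete lattice, and the fixed points of a closure operator are closed under arbitrary meets; hence $A_j$ is again complete and its join is given by $j\circ\bigvee$, so that $\bigvee_{\mathbf{A}_j}S=j(\bigvee S)$ is indeed the least upper bound of $S$ in $A_j$. The computational workhorse is the identity
\[
j\Bigl(\bigvee_{i} j(b_i)\Bigr)=j\Bigl(\bigvee_{i} b_i\Bigr),
\]
valid for every family $\{b_i\}\subseteq A$, which follows by a two-sided sandwich from (N1) together with the monotonicity and idempotency of $j$.

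Next I would establish the two stability identities
\[
j(v*j(a))=j(v*a),\qquad j(F(j(a)))=j(F(a)),
\]
for all $v\in V$ and $a\in A$. In each case the $\ge$ inequality follows from $a\le j(a)$ and the monotonicity of $v*(-)$ (respectively of $F$), while the $\le$ inequality is obtained by applying $j$ to (N3) (respectively (N4)) and collapsing the double $j$ via (N5). The crucial point is that $v*(-)$ and $F$ are monotone precisely because they preserve joins — by (A1) and by the fact that $F$ is a \Vmodule{} homomorphism — so these monotonicity inputs are legitimate.

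With these tools the verifications are routine bookkeeping. For each module axiom (A1)–(A3) of $\mathbf{A}_j$ I would unfold the definitions $*_{\mathbf{A}_j}=j\circ*$ and $\bigvee_{\mathbf{A}_j}=j\circ\bigvee$ and then use the stability and join-absorption identities to reduce the claim to the corresponding axiom of $\mathbf{A}$; unitality (A4) is immediate since $j(e*a)=j(a)=a$ for $a\in A_j$. That $(\mathbf{A}_j,F_{\mathbf{A}_j})$ is a \VFsemilattice{} reduces to showing $F_{\mathbf{A}_j}=j\circ F$ preserves $\bigvee_{\mathbf{A}_j}$ and commutes with $*_{\mathbf{A}_j}$; both follow from the $F$-stability identity together with the homomorphism property of $F$ on $\mathbf{A}$. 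Finally, that the surjection $j\colon\mathbf{A}\to\mathbf{A}_j$ is a \VFsemilattice{} homomorphism amounts to the three identities $j(\bigvee S)=\bigvee_{\mathbf{A}_j}j(S)$, $j(v*a)=v*_{\mathbf{A}_j}j(a)$, and $F_{\mathbf{A}_j}(j(a))=j(F(a))$, each of which is a direct consequence of join-absorption or a stability identity.

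The only place demanding genuine care is the setup of the two stability identities and the accompanying join-absorption lemma: getting the directions and the monotonicity justifications right is what makes the whole reduction work. Once that machinery is in place, no essentially hard step remains, and the rest of the argument is a mechanical transfer of the axioms from $\mathbf{A}$ to $\mathbf{A}_j$.
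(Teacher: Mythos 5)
Your argument is correct, but it takes a different route from the paper in the sense that the paper does not prove this lemma at all: it simply cites \cite{KP} for the statement that $\mathbf{A}_j$ is a \Vmodule{} and that $j$ is a \Vmodule{} homomorphism, and \cite[Lemma 2.5]{Bot} for the $F$-part. What you have written is, in effect, a self-contained reconstruction of the content of those references. The machinery you set up is sound: the join-absorption identity $j(\bigvee_i j(b_i))=j(\bigvee_i b_i)$ follows by the two-sided sandwich you describe (one direction from (N1) and monotonicity, the other from applying $j$ to $\bigvee_i j(b_i)\leq j(\bigvee_i b_i)$ and collapsing with (N5)), and the two stability identities $j(v*j(a))=j(v*a)$ and $j(F(j(a)))=j(F(a))$ are exactly the right reductions of (N3) and (N4); your justification of monotonicity of $v*(-)$ and of $F$ via join-preservation is legitimate, since $a\leq b$ gives $v*b=(v*a)\vee(v*b)$. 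With those in hand the axioms (A1)--(A4) for $\mathbf{A}_j$, the homomorphism property of $F_{\mathbf{A}_j}=j\circ F$, and the three identities making $j$ a \VFsemilattice{} homomorphism all reduce mechanically to the axioms in $\mathbf{A}$, as you say. What your approach buys is transparency and independence from the literature; what the paper's approach buys is brevity, at the cost of asking the reader to assemble the result from two external sources. Either is acceptable, and your version could serve as an appendix-style expansion of the paper's one-line proof.
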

\begin{proof} Clearly, $(A_j,\bigvee_{{\mathbf A}_j}, *_{{\mathbf A}_j})$ is  a \Vmodule{} and 
$j\colon A\to {A_j}$ is a homomorphisms of \Vmodule{}s (see, e.g., \cite{KP}). From \cite[Lemma 2.5]{Bot} we conclude that 
$(\mathbf{A}_j, F_{{\mathbf A}_j})$  is  a \VFsemilattice{} and 
$j\colon A\to {A_j}$ is a homomorphisms of \VFsemilattice{}s.
\end{proof}

As with sup-semilattices and \Vmodule{}s, there is a one-to-one correspondence 
between \VFnuclei{} and \VFcongruence{}s on \VFsemilattice{}s. 
This correspondence is defined by the following maps:
$$
\begin{array}{r c l}
	j&\mapsto&\theta_j; \ \text{here}\ %
	\theta_j=\{(a, b)\in A\times A\mid j(a)=j(b)\}\\[0.2cm]
	\theta &\mapsto& j_{\theta};\ \text{here}\ %
	j_{\theta}(x)=\bigvee\{y\in A\mid x\theta y\}.\\
\end{array}
$$
We omit the straightforward verification of this correspondence.

\begin{lemma}\label{lemprenucleus}
	Let  $({\mathbf A},F)$ be  \VFsemilattice{}   and  $j$ a \VFprenucleus{} on $({\mathbf A},F)$. Then 
	the poset $\left(A_j,\leq_{{\mathbf A}_j}\right)$ is a closure system on 
        $(A,\leq)$, and the associated closure operator $\mbox{n}(j)$ is given by 
	$$
	\mbox{\rm n}(j)(a)=\bigwedge\{x\in A_j\mid a\leq x\}.
	$$
	Moreover, $\mbox{\rm n}(j)$ is  a \VFnucleus{} on $({\mathbf A},F)$. 
\end{lemma}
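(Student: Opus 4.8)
The plan is to separate the purely order-theoretic content from the two compatibility axioms, treating the latter by an adjunction argument used twice. First I would verify that $A_j$ is closed under arbitrary meets taken in $(A,\leq)$; recall that $\mathbf{A}$, having all joins, is a complete lattice, so all meets exist and $n(j)$ is well defined. For a family $\{x_i\}_{i\in I}\subseteq A_j$, axiom (N1) gives $\bigwedge_i x_i\leq j(\bigwedge_i x_i)$, while monotonicity (N2) applied to $\bigwedge_i x_i\leq x_i$ gives $j(\bigwedge_i x_i)\leq j(x_i)=x_i$ for each $i$, hence $j(\bigwedge_i x_i)\leq\bigwedge_i x_i$ and $\bigwedge_i x_i\in A_j$. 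Thus $\left(A_j,\leq_{\mathbf{A}_j}\right)$ is a closure system, and by the standard correspondence between closure systems and closure operators its associated closure operator is exactly $n(j)(a)=\bigwedge\{x\in A_j\mid a\leq x\}$, which lands in $A_j$ and has $A_j$ as its fixed-point set. This immediately yields (N1), (N2) and (N5) for $n(j)$.

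The heart of the argument, and the step I expect to be the main obstacle, is verifying (N3) and (N4). Since the action $v*(-)$ preserves all joins by (A1), it admits a right adjoint $r_v\colon A\to A$ determined by $v*c\leq b$ iff $c\leq r_v(b)$; likewise $F$, being a \Vmodule{} homomorphism, preserves joins and has a right adjoint $g\colon A\to A$. The crucial observation is that both right adjoints map $A_j$ into itself. For $r_v$ and $y\in A_j$, combining (N3) for $j$ with the counit inequality $v*r_v(y)\leq y$ and monotonicity gives $v*j(r_v(y))\leq j(v*r_v(y))\leq j(y)=y$, so by adjunction $j(r_v(y))\leq r_v(y)$ and hence $r_v(y)\in A_j$; the identical computation with (N4) in place of (N3) shows $g(y)\in A_j$.

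With this in hand the two axioms follow by one pattern. To prove (N3) I would fix $y\in A_j$ with $v*a\leq y$; then $a\leq r_v(y)\in A_j$, so $n(j)(a)\leq r_v(y)$ and therefore $v*n(j)(a)\leq y$, and taking the meet over all such $y$ gives $v*n(j)(a)\leq n(j)(v*a)$. Replacing $r_v$ by $g$ and (N3) by (N4) yields $F(n(j)(a))\leq n(j)(F(a))$, which is (N4). Together with (N1), (N2) and (N5) above, $n(j)$ satisfies all five axioms and is therefore a \VFnucleus{} on $(\mathbf{A},F)$.
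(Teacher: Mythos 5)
Your proof is correct and complete. The order-theoretic part (closure of $A_j$ under arbitrary meets via (N1) and (N2), hence a closure system whose associated closure operator is $\mbox{\rm n}(j)$ with fixed-point set $A_j$, giving (N1), (N2), (N5)) is sound, and your treatment of (N3) and (N4) — showing that the right adjoints $r_v$ of $v*(-)$ and $g$ of $F$ map $A_j$ into itself, then transferring the inequalities to $\mbox{\rm n}(j)$ by residuation — is a valid and uniform argument. Where you differ from the paper is in self-containedness rather than in substance: the paper's proof consists entirely of two citations, delegating the closure-system statement and (N4) to \cite[Lemma 2.6]{Bot} and (N1)--(N3), (N5) to \cite[Proposition 2.3.4]{KP}, whereas you reconstruct the whole argument from the axioms. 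Your key observation — that both compatibility axioms (N3) and (N4) are instances of one adjunction pattern, so a single lemma ("the right adjoint of a join-preserving map satisfying the prenucleus inequality preserves $A_j$") handles both — is exactly the mechanism one would expect to find inside the cited results, and stating it explicitly makes the proof arguably cleaner than the paper's division of labour between two external sources. The one hypothesis you use tacitly but correctly is that a $\bigvee$-semilattice is a complete lattice, so that $r_v$, $g$ and the meets defining $\mbox{\rm n}(j)$ all exist; you flag this, so there is no gap.
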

\begin{proof} From \cite[Lemma 2.6]{Bot} we  obtain that 
	$\left(A_j,\leq_{{\mathbf A}_j}\right)$ is a closure system and  $\mbox{n}(j)$ is a closure operator satisfying (N4).  From \cite[Proposition 2.3.4.]{KP} we have that $\mbox{n}(j)$ satisfies (N1), (N2), (N3) and (N5). We conclude that 
 $\mbox{\rm n}(j)$ is  a \VFnucleus{} on $({\mathbf A},F)$.
\end{proof}

We will need the following. 

Let  $({\mathbf A},F)$ be  a \VFsemilattice{}   and  $X\subseteq A^{2}$. We put 
$$
\begin{array}{r c l}
	j[X](a)&=&a\vee\bigvee\{c\in A\mid d \leq a, (c,d)\in X\ \text{or}\ (d,c)\in X\}.
\end{array}
$$

\begin{lemma}\label{lemprenucleus2}
	Let  $({\mathbf A},F)$ be  \VFsemilattice{}   and  
 $X\subseteq A^{2}$ a subset such that 
	$(F\times F)(X)\cup \{(v*c, v*d)\mid (c,d)\in X, v\in V\}\subseteq X$.  Then the mapping 
	$j[X]\colon A \to A$ is a prenucleus on $({\mathbf A},F)$.  Moreover, 
	for every \VFsemilattice{}   $({\mathbf B},F_{{\mathbf B}})$ and every lax morphism 
	$g\colon A\to B$ of \VFsemilattice{}s such that $g(c)=g(d)$ for all $(c,d)\in X$  there is a unique 
	lax morphism $\overline{g}\colon A_{\mbox{\rm\small n}(j[X])}\to B$ 
	of \VFsemilattice{}s such that $g=\overline{g}\circ \mbox{\rm n}(j[X])$. 
\end{lemma}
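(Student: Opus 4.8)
The statement has two parts, and I would treat them in order.

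\emph{That $j[X]$ is a \VFprenucleus{}.} The plan is to verify (N1)--(N4) straight from the definition of $j[X]$. Conditions (N1) and (N2) are immediate: $a$ is one of the joinands of $j[X](a)$, and enlarging $a$ enlarges both that joinand and the index set $\{c\mid \exists d\le a,\ (c,d)\in X\text{ or }(d,c)\in X\}$. The two conditions that actually use the hypothesis on $X$ are (N3) and (N4). For (N3) I would distribute $v*(-)$ over the defining join using (A1); a typical generator $v*c$ arising from $(c,d)\in X$ (or $(d,c)\in X$) with $d\le a$ is then itself a generator for $j[X](v*a)$, because $(v*c,v*d)\in X$ (or $(v*d,v*c)\in X$) by the closure hypothesis on $X$ and $v*d\le v*a$ by monotonicity of the action; hence $v*j[X](a)\le j[X](v*a)$. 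Condition (N4) is the same argument with the module endomorphism $F$ in place of $v*(-)$, using that $F$ preserves joins and is monotone and that $(F\times F)(X)\subseteq X$. This is all routine once the hypothesis on $X$ is in hand.

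\emph{Setup and the key identity.} Granting that $j[X]$ is a prenucleus, Lemma~\ref{lemprenucleus} tells me that $n:=\mathrm{n}(j[X])$ is a \VFnucleus{} whose closure system is the fixed-point set $A_{j[X]}$; in particular $n(a)$ is the \emph{least fixed point of $j[X]$ above $a$}, and $A_{\mathrm{n}(j[X])}=A_{j[X]}$. Lemma~\ref{lemnucleus} then makes $\mathbf{A}_{\mathrm{n}(j[X])}$ a \VFsemilattice{} and $n$ a surjective homomorphism of \VFsemilattice{}s (surjective since $n$ fixes $A_{\mathrm{n}(j[X])}$ pointwise). The first computation I would carry out is the identity $g\circ j[X]=g$: applying the join-preserving $g$ to $j[X](a)$ yields $g(a)\vee\bigvee\{g(c)\mid \exists d\le a,\ (c,d)\in X\text{ or }(d,c)\in X\}$, and each such $g(c)$ equals $g(d)\le g(a)$ (because $g$ identifies the components of every pair of $X$ and is monotone), so the extra join is absorbed.

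\emph{The main obstacle and its resolution.} The crux is upgrading $g\circ j[X]=g$ to $g\circ n=g$. This is the hard part, precisely because $n$ is a closure operator built from infima while $g$ preserves only joins, so $g(n(a))$ cannot be evaluated termwise. I would resolve this using the right adjoint $g_{*}\colon B\to A$, $g_{*}(b)=\bigvee\{a\in A\mid g(a)\le b\}$, which exists because $g$ preserves all joins between complete lattices (a $\bigvee$-semilattice with all joins has all meets). Put $c:=g_{*}(g(a))$. Then $a\le c$ and $g(c)=g(a)$, and moreover $c$ is a fixed point of $j[X]$: from the key identity $g(j[X](c))=g(c)=g(a)$, so $j[X](c)\le c$ by the defining property of $g_{*}$, while $c\le j[X](c)$ by (N1). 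Since $n(a)$ is the least fixed point of $j[X]$ above $a$, I get $a\le n(a)\le c$, and applying the monotone $g$ gives $g(a)\le g(n(a))\le g(c)=g(a)$, i.e.\ $g(n(a))=g(a)$. (Alternatively one could reach the same identity by transfinitely iterating $j[X]$ and propagating $g\circ j[X]=g$ through the iteration, but the adjoint argument is cleaner.)

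\emph{Existence and uniqueness.} With $g\circ n=g$ secured, I would define $\overline{g}:=g|_{A_{\mathrm{n}(j[X])}}$; then $\overline{g}\circ n=g$ holds by construction. Checking that $\overline{g}$ is a lax morphism reduces, via $g\circ n=g$, to the corresponding facts about $g$: preservation of the induced join $n\circ\bigvee$ and action $n\circ *$ follows from $g(n(-))=g(-)$ together with $g$ being a \Vmodule{} homomorphism, and the lax inequality $F_{\mathbf B}(g(a))\le g(F(a))=g(n(F(a)))$ is just the laxness of $g$. For uniqueness, if $\overline{g}'\circ n=g$ as well, then for every $x\in A_{\mathrm{n}(j[X])}$ one has $\overline{g}'(x)=\overline{g}'(n(x))=g(x)=\overline{g}(x)$, using that $n$ restricts to the identity on $A_{\mathrm{n}(j[X])}$. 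The whole difficulty is concentrated in the identity $g\circ n=g$; everything surrounding it is bookkeeping.
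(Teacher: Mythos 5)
Your proof is correct, and it is essentially self-contained where the paper's is not: the paper verifies only (N3) directly (by the same distribution argument you give) and the compatibility of $\overline{g}$ with the $\mathbf V$-action, delegating (N1), (N2), (N4) and, crucially, the entire existence-and-uniqueness statement for $\overline{g}$ to \cite[Lemma 2.7]{Bot}, i.e.\ to the two-element-quantale case already treated there. What you supply in addition is a full proof of the factorization, and your route to the key identity $g\circ \mathrm{n}(j[X])=g$ --- passing through the right adjoint $g_*$ of the join-preserving $g$, showing $g_*(g(a))$ is a $j[X]$-fixed point above $a$, and squeezing $g(\mathrm{n}(j[X])(a))$ between $g(a)$ and $g(g_*(g(a)))$ --- is a clean way to bridge the genuine gap between the ``prenucleus-level'' identity $g\circ j[X]=g$ and the ``nucleus-level'' one, which is exactly the point where a termwise computation breaks down because $\mathrm{n}(j[X])$ is defined by a meet. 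The trade-off is the usual one: the paper's proof is shorter and leans on the earlier result plus the observation that only the $V$-action needs rechecking, while yours is longer but independent of \cite{Bot} and makes explicit why the closure hypothesis on $X$ and the join-preservation of $g$ are each used. Both arguments establish the lemma; yours would also survive being read without access to the cited reference.
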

\begin{proof} From \cite[Lemma 2.7]{Bot} we  obtain that $j[X]$ satisfies 
(N1), (N2)  and (N4). It remains to check that it satisfies (N3). 
	
 Let $a\in A$ and $v\in V$. We have to show that $v*j[X](a)\leq j[X](v*a)$. 
 Evidently, $v*a\leq j[X](v*a)$. 
 Let $c\in A$ such that $(c, d)\in X$ and $d\leq a$. Then 
 $(v*c, v*d)\in X$  and $v*d\leq v*a$. We conclude that 
 $v*c\leq j[X](v*a)$. Similarly, if $c\in A$ such that $(d, c)\in X$ and $d\leq a$ we 
 have $v*c\leq j[X](v*a)$. Therefore, 
 $v*j[X](a)=v*a\vee\bigvee\{v*c\in A\mid d \leq a, (c,d)\in X\ \text{or}\ (d,c)\in X\}\leq j[X](v*a)$. 

 By applying \cite[Lemma 2.7]{Bot}, we obtain a map 
 $\overline{g}\colon A_{\mbox{\rm\small n}(j[X])}\to B$. This map $\overline{g}$ has the following properties:
\begin{enumerate}
\item It satisfies the equation $g = \overline{g} \circ \mbox{\rm n}(j[X])$.
\item It preserves arbitrary joins.
\item For all elements $a$ in $A_{\mbox{\rm\small n}(j[X])}$, the inequality $F_H(\overline{g}(a)) \leq \overline{g}(F_{\mbox{\rm\small n}(j[X])}(a))$ holds.
\end{enumerate}

It remains to show that $\overline{g}$ is a \Vmodule{} homomorphism. 
Let $a \in A_{\mbox{\rm\small n}(j[X])}$ and $v\in V$. We compute:
\begin{align*}
    \overline{g}(v*_{{\mathbf A}_{\mbox{\rm\tiny n}(j[X])}} a)&=%
    \overline{g}(\mbox{\rm n}(j[X])(v*_A a))=g(v*_A a)=v*_B g(a)\\
    &=v*_B \overline{g}(\mbox{\rm n}(j[X])(a))= v*_B \overline{g}(a).
\end{align*}	
\end{proof}
	
\begin{remark} Recall that all the above statements can be easily extended to $\bigvee$-algebras 
\cite{Resende, Paseka}. $\bigvee$-algebras provide a way to study $\bigvee$-semilattices with additional algebraic structure, bridging order theory and universal algebra.
\end{remark}

	\section{Basic constructions and functorial theorems}\label{Basic constructions and functorial theorems}

 This section lays out the fundamental concepts and constructs that will serve as the cornerstones of our future work. 

 From now on we will always assume that $\mathbf V$ is a commutative unital quantale.

 We present now the foundational information for three key constructions:
\begin{enumerate}
\item $( - )^{( - )}\colon   \VSUPH \times \VJ \to \VFSUPL$,
\item $( - ) \otimes ( - )\colon  \VJ \times  \VFSUPL \to \VSUPH$,
\item ${\mathbf J}[( - ), ( - )]\colon \VFSUPL \times \VSUPH \to \VJ$.
\end{enumerate}

These functors are central to our work, giving rise to three adjoint situations $(\eta, \varepsilon), (\varphi, \psi)$, and $(\nu, \mu)$ (see Section~\ref{Three Adjoint Situations}). The functorial theorems presented here hold for any general commutative unital quantale $\mathbf{V}$ and the specialized versions for $\mathbf{V}=\mujmsf{2}$ align with theorems established in the paper \cite{Bot}.

 First, we prescribe to a \Vmodule\ $\mathbf{A}$ and a 
 {\Vframe} $\mathbf{J}$ a {\VFsemilattice} $\mathbf A^{\mathbf J}$.

		\begin{definition}\label{keyFsemiprod}
	Let $\mathbf A=(A,\bigvee, *)$ be a \Vmodule{} and $\mathbf J=(T,r)$ be a {\Vframe{}}. Let us define a \textit{\VFsemilattice{}} $\mathbf A^{\mathbf J}$ as $(\mathbf A^T,F^{\mathbf J})$ where
	
	$$(F^{\mathbf J}(x))(i)=\bigvee \{r(i,k)*x(k) \mid {k\in T}\}.$$
	
	The \Vmodule{} action on the \Vmodule{} ${\mathbf A}^T$, denoted by $*^T$, is defined, for any pair $(v,x)$, as  $(v*^T x)(t)=v*x(t)$ for every $v \in V$, $x \in  A^T$ and $t\in T$ .
\end{definition}

 To ensure the correctness of Definition \ref{keyFsemiprod}, we know that 
 \VSUPH{}  has all small limits, and they are constructed exactly as in the category of sets (see \cite[Proposition 2.2.1 and Theorem 2.2.2]{KP}). Hence  
 ${\mathbf A}^T$ is a \Vmodule{}. We have to verify that 
 $F^{\mathbf J}$ is a \Vmodule{} homomorphism. 
 
 Let $U\subseteq {A}^T$, $z\in {A}^T$, $v\in V$ and $i\in T$. We compute:
 \begin{align*}
     (F^{\mathbf J}(\bigvee U))(i)&=%
     \bigvee \{r(i,k)*(\bigvee U)(k) \mid {k\in T}\}\\
     &=%
     \bigvee \{r(i,k)*(\bigvee \{x\mid x\in U\})(k) \mid {k\in T}\}\\
     &=\bigvee \{r(i,k)*x(k) \mid {k\in T}, x\in U\}\\
     &=\bigvee \{\bigvee \{r(i,k)*x(k) \mid {k\in T}\}\mid x\in U\}\\
     &=\bigvee \{(F^{\mathbf J}(x))(i)\mid x\in U\}%
     =\left(\bigvee \{F^{\mathbf J}(x)\mid x\in U\}\right)(i)
 \end{align*}
 and 
 \begin{align*}
(F^{\mathbf J}&(v*^T z))(i)=%
\bigvee \{r(i,k)*(v*^T z)(k) \mid {k\in T}\}\\
     &=
     \bigvee \{r(i,k)*(v* z(k)) \mid {k\in T}\}
     =%
     \bigvee \{(r(i,k)\otimes v)* z(k)\mid {k\in T}\}\\
     &=%
     \bigvee \{(v\otimes r(i,k))* z(k) \mid {k\in T}\}
     =%
      \bigvee \{v* (r(i,k)* z(k)) \mid {k\in T}\}\\
     &=%
      v* (\bigvee \{r(i,k)* z(k)\mid {k\in T}\})
     =%
     v* (F^{\mathbf J}(x))(i)=(v*^T (F^{\mathbf J}(x)))(i).
 \end{align*}
 
	
		It's worth noting that Definition \ref{keyFsemiprod} represents a broader concept compared 
        to a definition we employed in our previous work (\cite[Deﬁnition 3.2.]{Bot}):

	Let $\mathbf L=(L,\bigvee)$ be a $\bigvee$-semilattice 
	and ${\mathbf J}=(T,S)$ a frame. Let us define 
	an $F$-$\bigvee$-semilattice ${\mathbf L}^{\mathbf J}$ as ${\mathbf L}^{\mathbf J}=(\mathbf L^T,F^{\mathbf J})$, 
	where $$(F^{\mathbf J}(x))(i)=\bigvee \{x(k) \mid {(i,k)\in S}\}$$ 
	for all $x\in L^T$. $F^{\mathbf J}$ will be called an 
	{\em operator on} ${\mathbf L}^{T}$ 
	{\em constructed by means of the frame} ${\mathbf J}$.

        Since every \Vframe{} boils down to a time frame 
        and a \Vmodule{} becomes an usual $\bigvee$-semilattice{} when 
        $\mathbf V$ is 
        a trivial two-element quantale $\mujmsf{2}$ both definitions coincide. 
        
	
	\begin{proposition}\label{funSSF}
			Let $\mathbf A_1$ and $\mathbf  A_2$ be \Vmodule{}s, 
		let $f\colon \mathbf A_1\longrightarrow\ \mathbf A_2$ be a homomorphism 
		of \Vmodule{}s, and let $\mathbf J=(T,r)$ be a \Vframe{}. Then 
		there exists a homomorphism 
		$f^{\mathbf J}\colon\mathbf A_1^{\mathbf J}\longrightarrow\ \mathbf A_2^{\mathbf J}$ 
		of\/ {\VFsemilattice{}s} such that, for every $x\in A_1^T$ and every $i\in T$, it holds $(f^{\mathbf J}(x))(i)=f(x(i))$. Moreover, $-^{\mathbf J}$ is a functor from \VSUPH{} to \VFSUPH.
	\end{proposition}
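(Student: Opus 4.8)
The plan is to take $f^{\mathbf J}$ to be the pointwise lift $f^T$ already introduced in the Notation subsection, i.e. $(f^{\mathbf J}(x))(i) = f(x(i))$ for $x \in A_1^T$ and $i \in T$; this is manifestly a well-defined map $A_1^T \to A_2^T$ and is forced by the formula demanded in the statement. It then remains to check three things: that $f^{\mathbf J}$ preserves arbitrary joins, that it commutes with the module action $*^T$, and --- the condition that makes it a morphism in the \emph{strict} category $\VFSUPH$ --- that it intertwines the two operators, $F^{\mathbf J} \circ f^{\mathbf J} = f^{\mathbf J} \circ F^{\mathbf J}$. Since $\mathbf A_1^{\mathbf J}$ and $\mathbf A_2^{\mathbf J}$ are already known to be \VFsemilattice{}s (this was verified immediately after Definition~\ref{keyFsemiprod}), these are the only obligations.

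For the first two conditions I would work pointwise, using that both the join and the action $*^T$ on $\mathbf A_i^T$ are computed coordinatewise and that $f$, being a \Vmodule{} homomorphism, preserves joins and the action. Concretely, $(f^{\mathbf J}(\bigvee U))(i) = f\big((\bigvee U)(i)\big) = f\big(\bigvee_{x\in U} x(i)\big) = \bigvee_{x\in U} f(x(i)) = \big(\bigvee_{x\in U} f^{\mathbf J}(x)\big)(i)$, and similarly $(f^{\mathbf J}(v *^T x))(i) = f(v * x(i)) = v * f(x(i)) = (v *^T f^{\mathbf J}(x))(i)$. Each reduces to a single application of the homomorphism property of $f$ in one coordinate.

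The heart of the argument is the intertwining condition, and it is where I expect the only real (though still routine) content to sit. Evaluating at $i \in T$, the left-hand side is $\big(F^{\mathbf J}(f^{\mathbf J}(x))\big)(i) = \bigvee\{ r(i,k) * (f^{\mathbf J}(x))(k) \mid k \in T\} = \bigvee\{ r(i,k) * f(x(k)) \mid k \in T\}$, while the right-hand side is $\big(f^{\mathbf J}(F^{\mathbf J}(x))\big)(i) = f\big(\bigvee\{ r(i,k) * x(k) \mid k \in T\}\big)$. The two agree precisely because $f$ preserves joins and the action, so it may be pushed through the supremum that defines $F^{\mathbf J}$; note this uses that the \emph{same} \Vframe{} $r$ governs $F^{\mathbf J}$ on both $\mathbf A_1^T$ and $\mathbf A_2^T$. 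It is exactly the join-preservation of $f$ that yields strict equality rather than a mere inequality, placing $f^{\mathbf J}$ in $\VFSUPH$ and not only in $\VFSUPL$.

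Finally, for functoriality I would observe that on objects $-^{\mathbf J}$ is the assignment $\mathbf A \mapsto \mathbf A^{\mathbf J}$ of Definition~\ref{keyFsemiprod}, while on morphisms the pointwise formula makes preservation of identities and composites immediate: $((\mathrm{id}_{\mathbf A})^{\mathbf J}(x))(i) = x(i)$ gives $(\mathrm{id}_{\mathbf A})^{\mathbf J} = \mathrm{id}_{\mathbf A^{\mathbf J}}$, and $((g\circ f)^{\mathbf J}(x))(i) = g(f(x(i))) = ((g^{\mathbf J}\circ f^{\mathbf J})(x))(i)$ gives $(g \circ f)^{\mathbf J} = g^{\mathbf J} \circ f^{\mathbf J}$. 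Together with the morphism verification above, this shows $-^{\mathbf J}$ is a well-defined functor $\VSUPH \to \VFSUPH$. There is no genuine obstacle here; the one point worth flagging is the strictness in the third step, which is what guarantees the codomain is $\VFSUPH$.
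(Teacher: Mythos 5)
Your proposal is correct and follows essentially the same route as the paper: the map is the pointwise lift, the only substantive check is pushing $f$ through the join defining $F^{\mathbf J}$ (using that $f$ preserves joins and the action, with the same $r$ on both sides), and functoriality is immediate from the pointwise formula. The only cosmetic difference is that you verify join- and action-preservation and the functor laws by hand, where the paper simply cites that products in \VSUPH{} are computed as in sets and that the product construction is functorial.
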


	\begin{proof} Since  \VSUPH\ has arbitrary products, we know that 
		$f^{\mathbf J}\colon \mathbf A_1^{T}\longrightarrow\mathbf A_2^{T}$ 
		is a morphism in \VSUPH. We denote 
  $\mathbf A_1^{\mathbf J}=(\mathbf A_1^{T}, F^{\mathbf J}_{1})$ and 
  $\mathbf A_2^{\mathbf J}=(\mathbf A_2^{T}, F^{\mathbf J}_{2})$.
		It remains to check that 
		$f^{\mathbf J}\colon\mathbf A_1^{\mathbf J}\longrightarrow\mathbf A_2^{\mathbf J}$ 
		is a homomorphism of {\VFsemilattice{}s}. We compute: 
		$$\begin{array}{@{}r@{\,\,}c@{\,\,}l}
			(F^{\mathbf J}_2(f^{\mathbf J}(x)))(i)&%
   =&\bigvee_{k\in T} (r(i,k)*_{\mathbf A_2}f^{\mathbf J}(x)(k))%
   = \bigvee_{k\in T} r(i,k)*_{\mathbf A_2}f(x(k))\\[0.2cm]
			&=&f(\bigvee_{k\in T} r(i,k)*_{\mathbf A_1}x(k))%
   =f(F^{\mathbf J}_1(x)(i))=(f^{\mathbf J}(F^{\mathbf J}_1(x)))(i).\end{array}$$
   The functoriality of $-^{\mathbf J}$ follows from the functoriality of the product construction in $V$-$\mathbb S$.
		\end{proof}
		
			\begin{proposition}\label{proptJ}
		Let $t\colon\mathbf  J_1\longrightarrow\ \mathbf J_2$ be a homomorphism of \Vframe{}s 
		$\mathbf J_1=(T_1,r)$ and $\mathbf J_2=(T_2,s)$, 		 
		and $\mathbf A$ a \Vmodule{}. Then there exists a lax morphism 
		${\mathbf A}^t\colon {\mathbf A}^{{\mathbf J}_2}\longrightarrow\ {\mathbf A}^{\mathbf J_1}$  
		of\/ {\VFsemilattice{}s} such that, for every $x\in A^{T_2}$ and every $i\in T_1$, it holds 
		$({\mathbf A}^t(x))(i)=x(t(i))$. Moreover, ${\mathbf A}^{-}$ is a contravariant functor from $\VJ$ 
		to the category \VFSUPL{} of\/ \VFsemilattice{}s.
			\end{proposition}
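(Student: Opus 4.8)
The plan is to realize ${\mathbf A}^t$ as precomposition with $t$: for $x\in A^{T_2}$ set ${\mathbf A}^t(x)=x\circ t$, that is, $({\mathbf A}^t(x))(i)=x(t(i))$ for $i\in T_1$, exactly as prescribed. First I would check that this is a morphism in \VSUPH{}, i.e.\ a homomorphism of the underlying \Vmodule{}s $\mathbf{A}^{T_2}\to\mathbf{A}^{T_1}$. Since joins and the action $*^{T}$ on the power modules are computed pointwise (Definition~\ref{keyFsemiprod}), both checks reduce to the identities $(\bigvee U)(t(i))=\bigvee_{x\in U}x(t(i))$ and $(v*^{T_2}x)(t(i))=v*x(t(i))$ evaluated at the point $t(i)\in T_2$; these hold trivially, so ${\mathbf A}^t$ preserves joins and the module action.

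The core of the argument is the lax-morphism inequality $F^{\mathbf{J}_1}({\mathbf A}^t(x))\leq {\mathbf A}^t(F^{\mathbf{J}_2}(x))$, which I would verify pointwise. At $i\in T_1$ the left-hand side is $\bigvee\{r(i,k)*x(t(k))\mid k\in T_1\}$, while the right-hand side is $(F^{\mathbf{J}_2}(x))(t(i))=\bigvee\{s(t(i),l)*x(l)\mid l\in T_2\}$. For each fixed $k\in T_1$ the \Vframe{}-homomorphism condition gives $r(i,k)\leq s(t(i),t(k))$; combined with monotonicity of the action in its first coordinate (a consequence of (A2): $v\leq v'$ forces $v*a\leq v'*a$), this yields $r(i,k)*x(t(k))\leq s(t(i),t(k))*x(t(k))$. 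The latter is exactly the $l=t(k)$ summand of the right-hand join, hence is bounded above by it; taking the join over all $k\in T_1$ gives the claimed inequality. I expect this to be the main point of the proof: the reason one obtains a lax morphism rather than a strict homomorphism is precisely that the frame condition is an inequality and that $t$ need not be surjective, so after reindexing the left-hand join ranges only over the image $t(T_1)\subseteq T_2$ and cannot in general recover the full $T_2$-indexed join on the right.

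Finally, for functoriality I would note that ${\mathbf A}^{-}$ reverses the direction of arrows. The identity law ${\mathbf A}^{\mathrm{id}}=\mathrm{id}$ is immediate from $({\mathbf A}^{\mathrm{id}}(x))(i)=x(i)$, and for $t_1\colon\mathbf{J}_1\to\mathbf{J}_2$ and $t_2\colon\mathbf{J}_2\to\mathbf{J}_3$ the contravariant composition law ${\mathbf A}^{t_2\circ t_1}={\mathbf A}^{t_1}\circ{\mathbf A}^{t_2}$ follows pointwise from $x((t_2\circ t_1)(i))=({\mathbf A}^{t_2}(x))(t_1(i))$. Since a composite of lax morphisms is again lax (using that a \Vmodule{} homomorphism is monotone, so it preserves the intermediate inequality), the assignment is well defined as a contravariant functor $\VJ\to\VFSUPL$, which completes the argument.
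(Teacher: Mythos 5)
Your proposal is correct and follows essentially the same route as the paper: precomposition with $t$, a pointwise verification of the lax inequality via $r(i,k)\leq s(t(i),t(k))$ followed by enlarging the join from the image $t(T_1)$ to all of $T_2$, and the standard contravariant identity and composition checks. The only additions beyond the paper's argument are the explicit justification of the \Vmodule{}-homomorphism property (which the paper dismisses as evident) and the observation that composites of lax morphisms are lax; both are correct and harmless.
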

		
		\begin{proof} Evidently, 
				${\mathbf A}^t\colon {\mathbf A}^{{T}_2}\longrightarrow\ {\mathbf A}^{T_1}$ is 
				a homomorphism of \Vmodule{}s. Let us show that it is lax. 
				Assume that $x\in A^{T_2}$ and $i\in T_1$. Then 
			$$\begin{array}{r@{\,\,}c@{\,\,}l}
		(F^{{\mathbf J}_1}({\mathbf A}^{t}(x)))(i)&=&
		\bigvee_ {k\in T_1} r(i,k) *_{{\mathbf A}}{\mathbf A}^{t}(x)(k)%
  =\bigvee_ {k\in  T_1} r(i,k)*_{{\mathbf A}} x(t(k))\\[0.2cm]
		&\leq&\bigvee_ {k\in  T_1} s(t(i),t(k))*_{{\mathbf A}} x(t(k))%
  \leq \bigvee_ {l\in  T_2} s(t(i),l)*_{{\mathbf A}} x(l)\\[0.2cm]
		&=&(F^{\mathbf J_2}(x))(t(i))%
		={\mathbf A}^t (F^{\mathbf J_2} (x))(i)\\
	\end{array}$$
 for all $x\in A^{T_2}$ and  $i\in T_1$. Now, let 
	$s\colon\mathbf  J_0\longrightarrow\ \mathbf J_1$ 
 be a homomorphism of \Vframe{}s 
	and let   $x\in A^{T_2}$ and $i\in T_0$. We compute:
	$$
	{\mathbf A}^{t\circ s}(x)(i)=x(t(s(i))={\mathbf A}^{t}(x)(s(i))={\mathbf A}^{s}({\mathbf A}^{t}(x))(i)=%
	\left(({\mathbf A}^{s}\circ {\mathbf A}^{t})(x)\right)(i).%
	$$
	Clearly, ${\mathbf A}^{{\mathrm id}_{\mathbf J_2}}(x)(i)=x(i)={\mathrm id}_{{\mathbf A}^{\mathbf J_2}}(x)(i)$ 
	for all  $x\in A^{T_2}$ and  $i\in T_2$. Hence ${\mathbf A}^{-}$ is really a contravariant functor from $V$-$\mathbb{J}$ 
	to $V$-$\FSUPL$.
		\end{proof}

In what follows we will construct a \Vmodule\  $\mathbf{J} \otimes \mathbf{H}$ from a {\Vframe} $\mathbf{J}$ and 
    a {\VFsemilattice} $\mathbf{H}$.

	Let $\mathbf A$ be a \Vmodule{} and ${\mathbf J}=(T,r)$ 
	a \Vframe{}. Then, 
	for arbitrary $x\in A$ and $(i,j)\in T\times T$, we define $x_{ir}(j)=r(i,j)*x$ and $x_{i=}$ by
	$$
	x_{i=}(j)=\left\{
	\begin{array}{l l}
		x&\text{if}\ i=j; \\
		0&\text{otherwise.}
	\end{array}\right.
	$$

 Note that 
 \begin{align*}
     x_{ir}=\bigvee\{r(i,k)*x_{k=}\mid k\in T\}.
 \end{align*}

	Let $\mathbf H=(\mathbf A, F)$ be an \VFsemilattice{}  
	and ${\mathbf J}=(T,r)$ a \Vframe{}. We put 
	$$
	[\mathbf J, \mathbf H]=\{(x_{ir}\vee F(x)_{i=}, F(x)_{i=})\mid x\in A, i\in T\}.
	$$
	
	Then $\{(v*^{T} (x_{ir}\vee F(x)_{i=}), v*^{T} F(x)_{i=}) \mid x\in A, i\in T\} \subseteq [\mathbf J, \mathbf H]\subseteq A^{T}\times  A^{T}$. 
 By Lemma \ref{lemprenucleus2} applied to \Vmodule{}s we can define a \Vmodule{} 
	${\mathbf J}\otimes {\mathbf H}$ as follows: 
	$${\mathbf J}\otimes {\mathbf H}={\mathbf A}^T_{j[\mathbf J, \mathbf H]}.$$

Let $\mathbf J_1=(T_1,r)$ and $\mathbf J_2=(T_2,s)$ be \Vframe{}s, 
$f:T_1\longrightarrow T_2$  a \Vframe{} homomorphism, and 
$(A,\bigvee, *)$ a \Vmodule{}.

We define a {\em forward operator} 
$f^{\rightarrow}\colon A^{T_1}\longrightarrow A^{T_2}$ 
evaluated on $k\in T_2$ for any $x\in A^{T_1}$ as follows:
$$(f^{\rightarrow}(x))(k)=\bigvee \{x(i)\mid f(i)=k \}$$
where $k\in T_2$. From \cite[Theorem 2.9]{Rod} we know that 
$P\colon \VJ\to \dvaSUPH$ defined by $P(\mathbf J_1)={\mathbf A}^{T_1}$ 
and $P(f)=f^{\rightarrow}$ is a functor.

   \begin{theorem}
Let $\mathbf J_1=(T_1,r)$ and $\mathbf J_2=(T_2,s)$ be \Vframe{}s, 
$f\colon T_1\longrightarrow T_2$  a \Vframe{} homomorphism, and 
$\mathbf A=(A,\bigvee, *)$ a \Vmodule{}. The operator \( -^{\rightarrow} \) defined as \( -^{\rightarrow} (\mathbf J_1) = \mathbf A^{T_1} \) on objects and \( -^{\rightarrow} (f) = f^{\rightarrow} \) on morphisms is a functor from the category \VJ{} of\/ 
\Vframe{}s to the category of \VSUPH.
   \end{theorem}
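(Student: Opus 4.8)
The plan is to reduce everything to the already-cited fact that $P\colon \VJ \to \dvaSUPH$ is a functor, so that the only genuinely new work is to promote the codomain from sup-semilattices to \Vmodule{}s by checking compatibility with the quantale action. First I would record that the assignment is well-defined on objects: since \VSUPH{} has all small products computed pointwise (as in the category of sets), $\mathbf{A}^{T_1}$ is a \Vmodule{} with pointwise join $\bigvee$ and pointwise action $*^{T_1}$ given by $(v *^{T_1} x)(i) = v * x(i)$.

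Next, for a \Vframe{} homomorphism $f\colon T_1 \to T_2$, the functoriality of $P$ already delivers two of the three things I need: that $f^{\rightarrow}$ preserves arbitrary joins (so it is at least a sup-semilattice homomorphism), and that $(g \circ f)^{\rightarrow} = g^{\rightarrow} \circ f^{\rightarrow}$ together with $\mathrm{id}_{T_1}^{\rightarrow} = \mathrm{id}_{\mathbf{A}^{T_1}}$. Thus the only remaining obligation is to show that $f^{\rightarrow}$ is in fact a \Vmodule{} homomorphism, that is, that it commutes with the action; join-preservation and the two functorial identities are inherited verbatim from $P$.

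This remaining point is a one-line pointwise computation. Fixing $v \in V$, $x \in A^{T_1}$ and $k \in T_2$, I would expand
$$(f^{\rightarrow}(v *^{T_1} x))(k) = \bigvee \{(v *^{T_1} x)(i) \mid f(i) = k\} = \bigvee \{v * x(i) \mid f(i) = k\}$$
and then pull $v$ outside the join using module axiom (A1), obtaining $v * \bigvee \{x(i) \mid f(i) = k\} = v * (f^{\rightarrow}(x))(k) = (v *^{T_2} f^{\rightarrow}(x))(k)$. Since $k$ is arbitrary, this yields $f^{\rightarrow}(v *^{T_1} x) = v *^{T_2} f^{\rightarrow}(x)$, so $f^{\rightarrow}$ is a morphism in \VSUPH.

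Combining objectwise well-definedness, morphismwise membership in \VSUPH, and the functorial identities inherited from $P$ establishes that $-^{\rightarrow}$ is a functor $\VJ \to \VSUPH$. The only step carrying genuine content is the action-compatibility check, and even there the whole burden is discharged by the single axiom (A1) that the action distributes over arbitrary joins; I do not anticipate any real obstacle, merely the need to verify that the quantale-module layer is respected on top of the sup-semilattice layer already handled by the cited result.
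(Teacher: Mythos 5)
Your proposal is correct and follows essentially the same route as the paper: both reduce the functorial identities to the cited fact that $P$ is a functor with range in \VSUPH{}, and both discharge the only new obligation — compatibility of $f^{\rightarrow}$ with the quantale action — by the same pointwise computation distributing $v$ over the join via axiom (A1). The only cosmetic difference is that you compute from $f^{\rightarrow}(v *^{T_1} x)$ toward $v *^{T_2} f^{\rightarrow}(x)$ while the paper runs the same chain in the opposite direction.
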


   \begin{proof} First, we show that $f^{\rightarrow}$ is a \Vmodule{} 
   homomorphism. Let  $x\in A^{T_1}$, $v\in V$ and $k\in T_2$. We compute: 
   \begin{align*}
       (v*^{T_2} f^{\rightarrow}(x))(k)&=v*(f^{\rightarrow}(x)(k))%
       =v*\bigvee \{x(i)\mid f(i)=k \}\\
       &=%
       \bigvee \{v*x(i)\mid f(i)=k \}= \bigvee \{(v*^{T_1}x)(i)\mid f(i)=k \}\\
       &=f^{\rightarrow}(v*^{T_1}x)(k).
   \end{align*}

To prove that the operator \( -^{\rightarrow} \) is a functor it is enough to note 
that $P$ is a functor and that range of $P$ is contained in \VSUPH.
\end{proof}

	\begin{proposition}\label{proptJfac}
 Let 	$f\colon{}\mathbf J_1\longrightarrow \mathbf J_2$ be a homomorphism of \Vframe{}s $\mathbf J_1=(T_1,r_1)$ and $\mathbf J_2=(T_2,r_2)$, 
 and $\mathbf H=(\mathbf A, F)$ 
	a \VFsemilattice. Then there exists a unique morphism 
	$f\otimes \mathbf H\colon \mathbf J_1\otimes \mathbf H\to\mathbf  J_2\otimes \mathbf H$ 
	of \Vmodule{}s such that the following diagram commutes:
	\begin{center}
		
		\begin{tikzpicture}
			\node (alfa') at (7,5) {$\mathbf A^{T_1}$};
			\node (beta') at (7,2) {$\mathbf A^{T_2}$};
			\node (gama') at (12,2) {$\mathbf J_2\otimes \mathbf H$};
			\node (delta') at (12,5) {$\mathbf J_1\otimes \mathbf H$};
			
			\node (1') at (9.5,4.5) {$\mbox{\rm n}(j[\mathbf J_1,\mathbf H])$};
			\node (2') at (9.5,2.5) {$\mbox{\rm n}(j[\mathbf J_2,\mathbf H])$};
			\node (3') at (7.4,3.55) {$f^{\rightarrow}$};
			\node (4') at (11.2,3.55) {$f\otimes \mathbf H$};
			
			\draw [->](alfa') -- (delta');
			\draw [->](beta') -- (gama');
			\draw [->](delta') -- (gama');
			\draw [->](alfa') -- (beta');
		\end{tikzpicture}
\end{center}
	Moreover, then $-\otimes\mathbf  H$ is a functor from $V$-$\mathbb{J}$ to $V$-${\mathbb S}$.

\end{proposition}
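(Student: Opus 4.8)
The plan is to produce $f \otimes \mathbf{H}$ from the universal property of the quotient $\mathbf{J}_1 \otimes \mathbf{H} = \mathbf{A}^{T_1}_{j[\mathbf{J}_1,\mathbf{H}]}$ recorded (in the \Vmodule{} version) in Lemma \ref{lemprenucleus2}. Write $n_\ell = \mbox{\rm n}(j[\mathbf{J}_\ell,\mathbf{H}])$ for $\ell=1,2$. Since the forward operator $f^{\rightarrow}$ is a \Vmodule{} homomorphism (by the functor $-^{\rightarrow}$ established above to take values in \VSUPH) and $n_2$ is the quotient homomorphism onto $\mathbf{J}_2 \otimes \mathbf{H}$, the composite $g := n_2 \circ f^{\rightarrow}\colon \mathbf{A}^{T_1}\to \mathbf{J}_2\otimes\mathbf{H}$ is a \Vmodule{} homomorphism. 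By Lemma \ref{lemprenucleus2} it factors uniquely as $g=\overline{g}\circ n_1$ through a \Vmodule{} homomorphism $\overline{g}\colon \mathbf{J}_1\otimes\mathbf{H}\to\mathbf{J}_2\otimes\mathbf{H}$, which will be our $f\otimes\mathbf{H}$, provided $g$ collapses every generating pair of $[\mathbf{J}_1,\mathbf{H}]$, i.e. $g(c)=g(d)$ whenever $(c,d)\in[\mathbf{J}_1,\mathbf{H}]$. The factorization equation $g=(f\otimes\mathbf{H})\circ n_1$ is exactly the commutativity of the square.

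So the heart of the argument is verifying this collapse condition on the generators $c=x_{ir_1}\vee F(x)_{i=}$, $d=F(x)_{i=}$ (for $x\in A$, $i\in T_1$). First I would compute the action of $f^{\rightarrow}$ on the two building blocks. A direct evaluation gives $f^{\rightarrow}(F(x)_{i=})=F(x)_{f(i)=}$, and, using that $f$ is a \Vframe{} homomorphism (so $r_1(i,l)\leq r_2(f(i),f(l))$) together with isotonicity of the action $*$, one obtains the key inequality $f^{\rightarrow}(x_{ir_1})\leq x_{f(i)r_2}$ by evaluating both sides at $k\in T_2$ and comparing the joins over $\{l\mid f(l)=k\}$. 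Since $f^{\rightarrow}$ preserves joins, $f^{\rightarrow}(c)=f^{\rightarrow}(x_{ir_1})\vee F(x)_{f(i)=}$ and $f^{\rightarrow}(d)=F(x)_{f(i)=}$.

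Next I would exploit the definition of the generators of $[\mathbf{J}_2,\mathbf{H}]$. Because $(x_{f(i)r_2}\vee F(x)_{f(i)=},\, F(x)_{f(i)=})\in[\mathbf{J}_2,\mathbf{H}]$ and $n_2(F(x)_{f(i)=})$ is a fixed point of $j[\mathbf{J}_2,\mathbf{H}]$ lying above $F(x)_{f(i)=}$, the defining formula for $j[\mathbf{J}_2,\mathbf{H}]$ forces $x_{f(i)r_2}\leq n_2(F(x)_{f(i)=})$. Combining this with the inequality above yields $f^{\rightarrow}(c)=f^{\rightarrow}(x_{ir_1})\vee F(x)_{f(i)=}\leq n_2(F(x)_{f(i)=})=g(d)$; applying the monotone idempotent $n_2$ gives $g(c)\leq g(d)$, while $d\leq c$ gives $g(d)\leq g(c)$, so $g(c)=g(d)$. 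This is the step I expect to be the main obstacle, since it is where both the frame-homomorphism hypothesis and the precise form of the generating relations enter; the remainder is formal.

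Finally, functoriality of $-\otimes\mathbf{H}$ follows from uniqueness in Lemma \ref{lemprenucleus2} together with the (already known) functoriality of $-^{\rightarrow}$, namely $\mathrm{id}^{\rightarrow}=\mathrm{id}$ and $(h\circ f)^{\rightarrow}=h^{\rightarrow}\circ f^{\rightarrow}$. For the identity, both $\mathrm{id}_{\mathbf{J}\otimes\mathbf{H}}$ and $\mathrm{id}_{\mathbf{J}}\otimes\mathbf{H}$ satisfy the factorization of $n\circ\mathrm{id}^{\rightarrow}=n$ through $n$, hence coincide. For composites $\mathbf{J}_1\xrightarrow{f}\mathbf{J}_2\xrightarrow{h}\mathbf{J}_3$, I would check that $(h\otimes\mathbf{H})\circ(f\otimes\mathbf{H})$ satisfies the factorization defining $(h\circ f)\otimes\mathbf{H}$ by the chase $((h\otimes\mathbf{H})\circ(f\otimes\mathbf{H}))\circ n_1=(h\otimes\mathbf{H})\circ n_2\circ f^{\rightarrow}=n_3\circ h^{\rightarrow}\circ f^{\rightarrow}=n_3\circ(h\circ f)^{\rightarrow}$, and conclude equality by uniqueness.
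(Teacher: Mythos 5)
Your proposal is correct and follows essentially the same route as the paper: factor $\mbox{\rm n}(j[\mathbf J_2,\mathbf H])\circ f^{\rightarrow}$ through $\mbox{\rm n}(j[\mathbf J_1,\mathbf H])$ via the \Vmodule{} version of Lemma \ref{lemprenucleus2}, after establishing the two key computations $f^{\rightarrow}(F(x)_{i=})=F(x)_{f(i)=}$ and $f^{\rightarrow}(x_{ir_1})\leq x_{f(i)r_2}$ and using that the nucleus for $\mathbf J_2$ collapses the generating pair $(x_{f(i)r_2}\vee F(x)_{f(i)=},F(x)_{f(i)=})$. The only cosmetic difference is that you derive $x_{f(i)r_2}\leq \mbox{\rm n}(j[\mathbf J_2,\mathbf H])(F(x)_{f(i)=})$ directly from the defining formula of $j[X]$ at a fixed point, whereas the paper applies the nucleus to both sides and chains inequalities; the functoriality argument via uniqueness is identical.
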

\begin{proof} Let $x\in A$ and $i\in T_1$. We have to verify
	\begin{align*}
		\mbox{\rm n}(j[\mathbf J_2,\mathbf H])(f^{\rightarrow}(F(x)_{i=}))=%
		\mbox{\rm n}(j[\mathbf J_2,\mathbf H])(f^{\rightarrow}(x_{ir_1}\vee F(x)_{i=})).
	\end{align*}
	
	We compute:
	\begin{align*}
		(f^{\rightarrow}(x_{ir_1}))(l)&=\bigvee \{(x_{ir_1}(k)\mid f(k)=l \}=%
		\bigvee \{(r_1(i,k)*x\mid f(k)=l \}\\ 
&\leq \bigvee \{r_2(f(i),f(k))*x\mid f(k)=l \}=r_2(f(i),l)*x= x_{f(i)r_2}(l)
	\end{align*}

for all $l\in T_2$. Hence $f^{\rightarrow}(x_{ir_1})\leq x_{f(i)r_2}$. 
Similarly, we have 
\begin{align*}
f^{\rightarrow}(F(x)_{i=})(l)=\bigvee  \{F(x)_{i=}(k) \mid f(k)=l\}=F(x)_{f(i)=}(l),
	\end{align*}
i.e.,  $f^{\rightarrow}(F(x)_{i=})=F(x)_{f(i)=}$.

Therefore it holds 
\begin{align*}
	\mbox{\rm n}(j[\mathbf J_2,\mathbf H])&(f^{\rightarrow}(x_{i{r_1}}))\leq %
	\mbox{\rm n}(j[\mathbf J_2,\mathbf H])(x_{f(i)r_2})\\
        &\leq %
	\mbox{\rm n}(j[\mathbf J_2,\mathbf H])(x_{f(i)r_2}\vee F(x)_{f(i)=})= %
	\mbox{\rm n}(j[\mathbf J_2,\mathbf H])(F(x)_{f(i)=})\\
 &=%
	\mbox{\rm n}(j[\mathbf J_2,\mathbf H])(f^{\rightarrow}(F(x)_{i=})).%
\end{align*}

We conclude 
$$\begin{aligned}
	\mbox{\rm n}(j[\mathbf J_2,\mathbf H])&%
	(f^{\rightarrow}(x_{ir_1}\vee F(x)_{i=}))\\%
	&\leq\mbox{\rm n}(j[\mathbf J_2,\mathbf H])(f^{\rightarrow}(x_{ir_1}))\vee %
	\mbox{\rm n}(j[\mathbf J_2,\mathbf H])(f^{\rightarrow}(F(x)_{i=}))\\%
    &\leq\mbox{\rm n}(j[\mathbf J_2,\mathbf H])(x_{f(i)r_2}))\vee %
	\mbox{\rm n}(j[\mathbf J_2,\mathbf H])(F(x)_{f(i)=})\\%
	&=\mbox{\rm n}(j[\mathbf J_2,\mathbf H])(F(x)_{f(i)=})= \mbox{\rm n}(j[\mathbf J_2,\mathbf H])(f^{\rightarrow}(F(x)_{i=}))\\%
	&\leq \mbox{\rm n}(j[\mathbf J_2,\mathbf H])(f^{\rightarrow}(x_{ir_1}\vee F(x)_{i=})).
\end{aligned}
$$

If we apply Lemma \ref{lemprenucleus2} we get a unique morphism $f\otimes \mathbf H$ 
of \Vmodule{}s such that 
$$\mbox{\rm n}(j[\mathbf J_2,\mathbf H])\circ f^{\rightarrow}=(f\otimes \mathbf H)\circ %
\mbox{\rm n}(j[\mathbf J_1,\mathbf H]).$$

Let us show that $-\otimes\mathbf  H$ is a functor. Evidently, for  
	$\mathbf J=(T,r)$ and $\mbox{\rm id}_T$ we have that 
	$\mbox{\rm id}_T^{\rightarrow}=\mbox{\rm id}_{\mathbf A^{T}}$.
	Hence $\mbox{\rm id}_T\otimes  \mathbf H=\mbox{\rm id}_{\mathbf  J\otimes\mathbf  H}$. 
	Now, let $t\colon{}\mathbf J_1\to\ \mathbf J_2$  and $s\colon{}\mathbf J_2\to\ \mathbf J_3$ 
	be  homomorphisms of frames. Then $(s\circ t)^{\rightarrow}=s^{\rightarrow}\circ t^{\rightarrow}$. From the uniqueness 
	property of $t\otimes\mathbf  H$, $s\otimes\mathbf  H$ and $(s\circ t)\otimes\mathbf  H$, we obtain that $(s\circ t)\otimes\mathbf  H=(s\otimes\mathbf  H)\circ (t\otimes\mathbf  H)$.
\end{proof}

\

\begin{proposition}\label{nucn}Let $\mathbf H_1=(\mathbf A_1, F_1), \mathbf H_2=(\mathbf A_2, F_2)$ be 
	\VFsemilattice{}s,  $f\colon \mathbf H_1 \to \mathbf H_2$ 
 a  lax morphism of\/ \VFsemilattice{}s
	and $\mathbf J=(T,r)$ a \Vframe{}. Then there is a unique morphism 
	$\mathbf J\otimes f\colon \mathbf J\otimes \mathbf H_1\to\ \mathbf J\otimes \mathbf H_2$ of 
	\Vmodule{}s such that the following diagram commutes:
	
	\begin{center}
		\begin{tikzpicture}
			
			\node (alfa') at (7,5) {$\mathbf A_1^{T}$};
			\node (beta') at (7,2) {$\mathbf A_2^{T}$};
			\node (gama') at (12,2) {$\mathbf J\otimes \mathbf H_2$};
			\node (delta') at (12,5) {$\mathbf J\otimes \mathbf H_1$};
			
			\node (1') at (9.5,4.5) {$\mbox{\rm n}(j[\mathbf J,\mathbf H_1])$};
			\node (2') at (9.5,2.5) {$\mbox{\rm n}(j[\mathbf J,\mathbf H_2])$};
			\node (3') at (7.45,3.55) {$f^{\mathbf J}$};
			\node (4') at (11.2,3.55) {$\mathbf J\otimes f$};
			
			\draw [->](alfa') -- (delta');
			\draw [->](beta') -- (gama');
			\draw [->](delta') -- (gama');
			\draw [->](alfa') -- (beta');
		\end{tikzpicture}
	\end{center}
	
	Moreover, $\mathbf J\otimes -$ is a functor from the category \VFSUPL\ of \/
 \VFsemilattice{}s to the category  \VSUPH\ of \Vmodule{}s.
\end{proposition}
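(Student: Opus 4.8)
The plan is to produce $\mathbf J\otimes f$ from the universal property in Lemma~\ref{lemprenucleus2}, mirroring the proof of Proposition~\ref{proptJfac} but now varying the \VFsemilattice{} factor rather than the frame. Abbreviate $n_1=\mbox{\rm n}(j[\mathbf J,\mathbf H_1])$ and $n_2=\mbox{\rm n}(j[\mathbf J,\mathbf H_2])$, and take $X=[\mathbf J,\mathbf H_1]$. Consider the composite $g=n_2\circ f^{\mathbf J}\colon\mathbf A_1^{T}\to\mathbf J\otimes\mathbf H_2$, where $f^{\mathbf J}$ is built from the underlying \Vmodule{} homomorphism $f$ as in Proposition~\ref{funSSF}; since $f^{\mathbf J}$ and the quotient projection $n_2$ (Lemma~\ref{lemnucleus}) are \Vmodule{} homomorphisms, so is $g$. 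To apply Lemma~\ref{lemprenucleus2} (to \Vmodule{}s, i.e.\ with identity endomorphism, where ``lax morphism'' reduces to ``\Vmodule{} homomorphism''), I must check that $g$ identifies the two components of every generator of $X$:
\[
g(x_{ir}\vee F_1(x)_{i=})=g(F_1(x)_{i=})\qquad(x\in A_1,\ i\in T).
\]

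This identity is the heart of the argument and the only place laxness of $f$ is used. Since $(f^{\mathbf J}(y))(k)=f(y(k))$ and $f$ preserves the action and joins, one gets $f^{\mathbf J}(x_{ir})=f(x)_{ir}$ and $f^{\mathbf J}(F_1(x)_{i=})=f(F_1(x))_{i=}$; writing $y=f(x)$ and using that $f^{\mathbf J}$ preserves joins, the required equation becomes $n_2(y_{ir}\vee f(F_1(x))_{i=})=n_2(f(F_1(x))_{i=})$. Here ``$\geq$'' is just monotonicity of $n_2$. For ``$\leq$'', the generator $(y_{ir}\vee F_2(y)_{i=},F_2(y)_{i=})$ of $[\mathbf J,\mathbf H_2]$ yields $n_2(y_{ir}\vee F_2(y)_{i=})=n_2(F_2(y)_{i=})$, whence $n_2(y_{ir})\leq n_2(F_2(y)_{i=})$; the lax inequality $F_2(f(x))\leq f(F_1(x))$ gives $F_2(y)_{i=}\leq f(F_1(x))_{i=}$, so
\[
n_2(y_{ir})\leq n_2(F_2(y)_{i=})\leq n_2(f(F_1(x))_{i=}).
\]
Combining this with the closure identity $n_2(a\vee b)=n_2(n_2(a)\vee n_2(b))$ collapses the join and delivers the desired equality.

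With the hypotheses of Lemma~\ref{lemprenucleus2} in hand, I obtain a unique \Vmodule{} homomorphism $\mathbf J\otimes f\colon\mathbf J\otimes\mathbf H_1\to\mathbf J\otimes\mathbf H_2$ with $g=(\mathbf J\otimes f)\circ n_1$, which is precisely the asserted commutativity; uniqueness comes from the lemma. For functoriality, I first record that lax morphisms compose (for lax $f\colon\mathbf H_1\to\mathbf H_2$ and $g\colon\mathbf H_2\to\mathbf H_3$, apply laxness of $g$ and then monotonicity of $g$ to laxness of $f$), so $\mathbf J\otimes(g\circ f)$ is defined. Since $-^{\mathbf J}$ is functorial, $\mathrm{id}_{\mathbf H}^{\mathbf J}=\mathrm{id}_{\mathbf A^{T}}$ and $(g\circ f)^{\mathbf J}=g^{\mathbf J}\circ f^{\mathbf J}$; plugging these into the uniqueness clause of Lemma~\ref{lemprenucleus2} gives $\mathbf J\otimes\mathrm{id}_{\mathbf H}=\mathrm{id}_{\mathbf J\otimes\mathbf H}$ and $\mathbf J\otimes(g\circ f)=(\mathbf J\otimes g)\circ(\mathbf J\otimes f)$, exactly the bookkeeping concluding Proposition~\ref{proptJfac}. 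The main obstacle is the displayed key identity, specifically the single step where laxness converts the ``future'' term $y_{ir}$ into something dominated modulo $n_2$ by $f(F_1(x))_{i=}$.
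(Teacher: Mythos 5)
Your proposal is correct and follows essentially the same route as the paper's proof: the same computation $f^{\mathbf J}(x_{ir})=f(x)_{ir}$, the same use of the generator $(y_{ir}\vee F_2(y)_{i=},F_2(y)_{i=})$ of $[\mathbf J,\mathbf H_2]$ together with laxness of $f$ to obtain $\mbox{\rm n}(j[\mathbf J,\mathbf H_2])(f^{\mathbf J}(x_{ir}\vee F_1(x)_{i=}))=\mbox{\rm n}(j[\mathbf J,\mathbf H_2])(f^{\mathbf J}(F_1(x)_{i=}))$, and the same appeal to Lemma~\ref{lemprenucleus2} for existence, uniqueness, and the functoriality bookkeeping. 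Your explicit justification of the final join-collapse via $\mbox{\rm n}(j)(a\vee b)=\mbox{\rm n}(j)(\mbox{\rm n}(j)(a)\vee \mbox{\rm n}(j)(b))$ is a minor elaboration of a step the paper leaves implicit.
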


\begin{proof} Let $x\in A_1$ and $i, k\in S$. 
By definition, we have $(f^{\mathbf J}(x))(i)=f(x(i))$, so 
$$(f^{\mathbf J}(x_{ir}))(k)=f(x_{ir}(k))=f(r(i,k)*x)=r(i,k)*f(x)=f(x)_{ir}(k).$$

We compute:
		\begin{align*}
	\mbox{\rm n}&(j[\mathbf J,\mathbf H_2])(f^{\mathbf J}(x_{i{r}}))= %
	\mbox{\rm n}(j[\mathbf J,\mathbf H_2])(f(x)_{ir})\\
 &\leq %
 \mbox{\rm n}(j[\mathbf J,\mathbf H_2])(f(x)_{ir} \vee (F_2(f(x)))_{i=})=
	\mbox{\rm n}(j[\mathbf J,\mathbf H_2])(F_2(f(x))_{i=})\\
	&\leq %
	\mbox{\rm n}(j[\mathbf J,\mathbf H_2])(f(F_1(x))_{i=})=%
	\mbox{\rm n}(j[\mathbf J_2,\mathbf H])(f^{\mathbf J}(F_1(x)_{i=})).%
\end{align*}
	
	Hence 
	$$
	\mbox{\rm n}(j[\mathbf J,\mathbf H_2])(f^{\mathbf J}(x_{ir}\vee F_1(x)_{i=}))=%
	\mbox{\rm n}(j[\mathbf J,\mathbf H_2])(f^{\mathbf J}(F_1(x)_{i=})).
	$$
	We conclude, as before  from Lemma \ref{lemprenucleus2},  that there is a unique morphism 
	$\mathbf J\otimes f\colon \mathbf J\otimes \mathbf H_1\to\ \mathbf J\otimes \mathbf H_2$
	of \Vmodule{}s such that 
	$\mbox{\rm n}(j[\mathbf J,\mathbf H_2])\circ f^{\mathbf J}=(\mathbf J\otimes f)\circ %
	\mbox{\rm n}(j[\mathbf J,\mathbf H_1])$

 Let us verify that $\mathbf J\otimes -$ is a functor. 
	Let $\mathbf H=(\mathbf A, F)$ be a \VFsemilattice{} and 
	$\mbox{\rm id}_{\mathbf H}$ the identity morphism on $\mathbf H$.

	We know from Theorem  \ref{funSSF} that $-^{\mathbf J}$ is a functor from 
 \VSUPH{} to \VFSUPH{}, 
	hence also from  \VFSUPL{} to \VFSUPH{} (we 
 take twice the forgetful functor  from  \VFSUPL{} to \VSUPH{}).
	Hence  we have that 
	$\mbox{\rm id}_{\mathbf H}^{\mathbf J}=\mbox{\rm id}_{\mathbf A^{T}}$. Therefore 
	$\mathbf J\otimes \mbox{\rm id}_{\mathbf H}=\mbox{\rm id}_{\mathbf J\otimes \mathbf H}$.

	Now, let $f\colon{}\mathbf H_1\to\ \mathbf H_2$  and $g\colon{}\mathbf H_2\to\ \mathbf H_3$ 
	be   lax morphisms of \VFsemilattice{}s. 
	Then $(g\circ f)^{\mathbf J}=g^{\mathbf J}\circ f^{\mathbf J}$. From the uniqueness 
	property of $\mathbf J\otimes  f$, $\mathbf J\otimes g$ and $\mathbf J\otimes (g\circ f)$ we conclude  that 
	$\mathbf J\otimes (g\circ f)=(\mathbf J\otimes g)\circ (\mathbf J\otimes  f)$.
\end{proof}

Let $\mathbf L$ be a \Vmodule{} and $\mathbf{H}=(\mathbf A,F)$ a \VFsemilattice. 

The last construction we will need gives us 
a {\Vframe} $\mathbf{J}[\mathbf{H}, \mathbf{L}]$ from a {\VFsemilattice} $\mathbf{H}$ 
    and a \Vmodule\   $\mathbf{L}$.

Let us define $a \rightarrow b$ for $a,b\in L$ as $\bigvee\{v\in V \mid v*a \leq b\}$. 

Let $T_{[\mathbf H, \mathbf L]}={\VSUPH}(\mathbf A, \mathbf L)$, i.e., 
	the elements of $T_{[\mathbf H, \mathbf L]}$ are morphisms $\alpha$ of \Vmodule{}s
	from $\mathbf A$ to $\mathbf L$. Let $\alpha, \beta \in T_{[\mathbf H, \mathbf L]}$ and $v\in V$. Assume, for all $x \in A$, that $v \otimes \beta(x) \leq \alpha(F(x))$.  
This means $v \leq \beta(x) \rightarrow \alpha(F(x))$ for all $x \in A$.

We then put  $r(\alpha, \beta)=\bigwedge_{x \in A} \{\beta(x) \rightarrow \alpha(F(x))\}$.

\begin{definition}\label{framefromfss}  Let $\mathbf H=(\mathbf A, F)$ be a 
\VFsemilattice{} and let $\mathbf L$ be a \Vmodule{}. 
We define a \Vframe{} $\mathbf J[\mathbf H, \mathbf L]=	(T_{[\mathbf H, \mathbf L]},r)$. 
\end{definition}

\begin{theorem}\label{funSJrel} Let $\mathbf L_1, \mathbf L_2$ be \Vmodule{}s, 
	$\mathbf H=(\mathbf A, F)$ a \VFsemilattice,  
	and  let $f\colon \mathbf  L_1\to \ \mathbf L_2$ be a morphism of\/ \Vmodule{}s. Then there exists 
	a homomorphism $\mathbf J[\mathbf H, f]\colon \mathbf J[\mathbf H, \mathbf L_1]\to \mathbf J[\mathbf H, \mathbf L_2]$ 
	of\/ \Vframe{}s such that 
	$$(\mathbf J[\mathbf H, f](\alpha))(x)=f(\alpha(x))$$ 
	for all $\alpha\in T_{[\mathbf H, \mathbf L_1]}$ and all $x\in A$. Moreover, $\mathbf J[\mathbf H, -]$ 
	is a functor from a category $\VSUPH$ of \Vmodule{}s to the category $\VJ$ of \Vframe{}s.
\end{theorem}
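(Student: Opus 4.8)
The plan is to define the object map $\mathbf J[\mathbf H, f]$ by postcomposition with $f$, that is, $\mathbf J[\mathbf H, f](\alpha) = f \circ \alpha$, and then to verify three things in order: that this lands in the correct hom-set, that it is a $\mathbf V$-frame homomorphism, and that the assignment $f \mapsto \mathbf J[\mathbf H, f]$ is functorial. Since $\alpha \colon \mathbf A \to \mathbf L_1$ and $f \colon \mathbf L_1 \to \mathbf L_2$ are both homomorphisms of \Vmodule{}s, their composite preserves all joins and commutes with the action, so $f \circ \alpha$ lies in $T_{[\mathbf H, \mathbf L_2]} = \VSUPH(\mathbf A, \mathbf L_2)$; this is exactly the assignment $(\mathbf J[\mathbf H, f](\alpha))(x) = f(\alpha(x))$ required by the statement. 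Note that the hom-set depends only on the underlying module $\mathbf A$ of $\mathbf H$, not on $F$, so no further condition is needed for well-definedness.

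Write $r_1, r_2$ for the relations of $\mathbf J[\mathbf H, \mathbf L_1]$ and $\mathbf J[\mathbf H, \mathbf L_2]$, and $\rightarrow_1, \rightarrow_2$ for the residua of $\mathbf L_1, \mathbf L_2$. The frame-homomorphism condition to establish is $r_1(\alpha, \beta) \leq r_2(f \circ \alpha, f \circ \beta)$ for all $\alpha, \beta \in T_{[\mathbf H, \mathbf L_1]}$. Unwinding the definition $r(\alpha,\beta) = \bigwedge_{x \in A}\{\beta(x) \rightarrow \alpha(F(x))\}$, it suffices to prove the pointwise inequality
$$\beta(x) \rightarrow_1 \alpha(F(x)) \leq f(\beta(x)) \rightarrow_2 f(\alpha(F(x)))$$
for every $x \in A$, since taking infima over $x$ then yields the claim (each left-hand side dominates the full infimum, which is therefore a lower bound of the right-hand sides). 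This pointwise step is the heart of the proof and where I expect the only genuine work.

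To prove it, put $v = \beta(x) \rightarrow_1 \alpha(F(x)) = \bigvee\{w \in V \mid w * \beta(x) \leq \alpha(F(x))\}$. Because the module action distributes over arbitrary joins in its scalar argument (axiom (A2)), the defining supremum itself satisfies $v * \beta(x) \leq \alpha(F(x))$. Applying the homomorphism $f$, which is monotone and commutes with the action, yields $v * f(\beta(x)) = f(v * \beta(x)) \leq f(\alpha(F(x)))$, whence $v \leq f(\beta(x)) \rightarrow_2 f(\alpha(F(x)))$ by the definition of $\rightarrow_2$ as a join. This secures the pointwise inequality and hence the frame-homomorphism property.

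Functoriality is then routine. For the identity, $\mathbf J[\mathbf H, \mathrm{id}_{\mathbf L}](\alpha) = \mathrm{id}_{\mathbf L} \circ \alpha = \alpha$, so $\mathbf J[\mathbf H, \mathrm{id}_{\mathbf L}]$ is the identity frame homomorphism on $\mathbf J[\mathbf H, \mathbf L]$. For composable $f \colon \mathbf L_1 \to \mathbf L_2$ and $g \colon \mathbf L_2 \to \mathbf L_3$, associativity of composition gives $(g \circ f) \circ \alpha = g \circ (f \circ \alpha)$, so $\mathbf J[\mathbf H, g \circ f] = \mathbf J[\mathbf H, g] \circ \mathbf J[\mathbf H, f]$. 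The main obstacle throughout is the single compatibility between residuation and $f$ expressed by the displayed pointwise inequality; once that is in place, both well-definedness and functoriality follow by formal manipulations.
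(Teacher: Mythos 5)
Your proposal is correct and follows essentially the same route as the paper: well-definedness via composition of \Vmodule{} homomorphisms, the frame condition via the residuation counit $v*\beta(x)\leq\alpha(F(x))$ pushed through $f$ using $f(v*y)=v*f(y)$, and routine functoriality. The only cosmetic difference is that you establish the pointwise inequality $\beta(x)\rightarrow_1\alpha(F(x))\leq f(\beta(x))\rightarrow_2 f(\alpha(F(x)))$ before taking the infimum, whereas the paper bounds $r_1(\alpha,\beta)$ directly against each term of the infimum defining $r_2$; the underlying computation is identical.
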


\begin{proof} We first show that, for every $\alpha\in T_{[\mathbf H, \mathbf L_1]}$, 
$(\mathbf J[\mathbf H, f](\alpha)\in T_{[\mathbf H, \mathbf L_2]}$.

Let $U\subseteq A$, $x\in A$ and $v\in V$. We compute: 
\begin{align*}
    (\mathbf J[\mathbf H, f](\alpha))(\bigvee U)&=f(\alpha(\bigvee U))=%
    \bigvee\{f(\alpha(u))\mid u\in U\}\\
    &= \bigvee\{  (\mathbf J[\mathbf H, f](\alpha))(u)\mid u\in U\}
\end{align*}
and 
\begin{align*}
   (\mathbf J[\mathbf H, f](\alpha))(v*x)&=f(\alpha(v*x))= %
   v* f(\alpha(x))=v* (\mathbf J[\mathbf H, f](\alpha))(x).
\end{align*}

Now, we will verify that $\mathbf J[\mathbf H, f]$ is a homomorphism of 
\Vframe{}s. 
Let $\alpha, \beta\in T_{[\mathbf H, \mathbf L_1]}$. Then 	
$r_1(\alpha, \beta)*^{A}\beta \leq \alpha\circ F$ in $\mathbf L_1^{A}$. 



	For all $x \in A$, we compute:
\begin{align*}
         r(\alpha, \beta)*f(\beta(x)) =   f(r(\alpha,\beta)*\beta(x)) \leq f(\alpha(F(x)).
        \end{align*}
We conclude   $ r_1(\alpha, \beta)*\mathbf J[\mathbf H, f](\beta)(x) \leq \mathbf J[\mathbf H, f](\alpha)(F(x))$. Therefore, $ r_1(\alpha, \beta) \leq %
\mathbf J[\mathbf H, f](\beta)(x) \rightarrow \mathbf J[\mathbf H, f](\alpha)(F(x))$, 
i.e., $ r_1(\alpha, \beta) \leq %
r_2(\mathbf J[\mathbf H, f](\alpha), \mathbf J[\mathbf H, f](\beta))$.

	Let us check that  $\mathbf J[\mathbf H, -]$ is a functor. Let $\mathbf L$ be a \Vmodule{}, 
	$\mbox{\rm id}_{\mathbf L}$ the identity morphism of \Vmodule{}s on $\mathbf L$ and
	$\alpha\in T_{[\mathbf H, \mathbf L]}$. We compute:
	$$
	\mathbf J[\mathbf H, \mbox{\rm id}_{\mathbf L}](\alpha)=%
	\mbox{\rm id}_{\mathbf L} \circ \alpha=\alpha=\mbox{\rm id}_{\mathbf J[\mathbf H, \mathbf L]}(\alpha). 
	$$
	
	Now, let $f\colon{}\mathbf L_1\to\ \mathbf L_2$  and $g\colon{}\mathbf L_2\to\ \mathbf L_3$ 
	be  morphisms of \Vmodule{}s. 
	Then 
	$$
	\begin{array}{r@{\,\,}c@{\,\,}l}
		\mathbf J[\mathbf H, g\circ f](\alpha)&=&(g \circ f)\circ \alpha=g \circ (f\circ \alpha)=%
		g\circ \mathbf J[\mathbf H, f](\alpha)\\[0.2cm]
		&=&\mathbf J[\mathbf H, g](\mathbf J[\mathbf H, f](\alpha))=
		(\mathbf J[\mathbf H, g]\circ \mathbf J[\mathbf H, f])(\alpha).
	\end{array}$$
\end{proof} 

\begin{theorem}\label{funSFJrel}  Let $\mathbf H_1=(\mathbf A_1, F_1), %
	\mathbf H_2=(\mathbf A_2, F_2)$ be \VFsemilattice{}s, 
        $\mathbf L$ a \Vmodule{} 
	and $f\colon \mathbf H_1\to\ \mathbf H_2$ a lax morphism of\/  \VFsemilattice{}s. 
	Then there exists a homomorphism 
	$\mathbf J[f, \mathbf L]\colon \mathbf J[\mathbf H_2, \mathbf L]\to \mathbf J[\mathbf H_1, \mathbf L]$ 
	of\/ \Vframe{}s such that 
	$$(\mathbf J[f, \mathbf L](\alpha))(y)=\alpha(f(y))=(\alpha \circ f)(y)$$ 
	for all $\alpha\in T_{[\mathbf H_2, \mathbf L]}$ and  all $y\in G_1$. Moreover, $\mathbf J[-, \mathbf L]$ 
	is a  contravariant functor  from the category $\VFSUPL$ of 
 \VFsemilattice{}s to the category $\VSUPH$ of \Vframe{}s.
\end{theorem}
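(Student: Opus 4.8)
The plan is to define $\mathbf J[f, \mathbf L]$ on objects by precomposition, setting $\mathbf J[f, \mathbf L](\alpha) = \alpha \circ f$, and then to verify in turn that this is well-defined, that it is a \Vframe{} homomorphism, and that $\mathbf J[-, \mathbf L]$ respects identities and composition contravariantly. First I would check well-definedness: since $f\colon \mathbf A_1 \to \mathbf A_2$ is in particular a \Vmodule{} homomorphism (a lax morphism preserves joins and the action, differing from a strict homomorphism only in the inequality governing the endomorphisms) and $\alpha\colon \mathbf A_2 \to \mathbf L$ is a \Vmodule{} homomorphism, the composite $\alpha \circ f$ preserves joins and the action, hence lies in $T_{[\mathbf H_1, \mathbf L]} = \VSUPH(\mathbf A_1, \mathbf L)$. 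This immediately yields the stated formula $(\mathbf J[f, \mathbf L](\alpha))(y) = \alpha(f(y))$.

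The core of the argument, entirely parallel in technique to Theorem~\ref{funSJrel} but now using laxness, is to establish the defining inequality for a \Vframe{} homomorphism, namely
\[
r_2(\alpha, \beta) \leq r_1(\alpha \circ f, \beta \circ f)
\]
for all $\alpha, \beta \in T_{[\mathbf H_2, \mathbf L]}$, where $r_i$ denotes the relation of $\mathbf J[\mathbf H_i, \mathbf L]$. Writing $v = r_2(\alpha, \beta) = \bigwedge_{x \in A_2} \{\beta(x) \rightarrow \alpha(F_2(x))\}$ and invoking the residuation $v \leq a \rightarrow b \iff v * a \leq b$ (valid because $- * a$ preserves joins by axiom (A2) and hence is a left adjoint with right adjoint $a \rightarrow -$), the definition of $v$ gives $v * \beta(x) \leq \alpha(F_2(x))$ for every $x \in A_2$. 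For an arbitrary $y \in A_1$ I would instantiate this at $x = f(y)$, obtaining $v * \beta(f(y)) \leq \alpha(F_2(f(y)))$. The lax condition on $f$ gives $F_2(f(y)) \leq f(F_1(y))$, and since $\alpha$ is join-preserving, hence monotone, this transports to $\alpha(F_2(f(y))) \leq \alpha(f(F_1(y)))$. Chaining the two inequalities yields $v * \beta(f(y)) \leq \alpha(f(F_1(y)))$, i.e. $v \leq \beta(f(y)) \rightarrow \alpha(f(F_1(y)))$ for every $y$; taking the meet over $y \in A_1$ gives $v \leq r_1(\alpha \circ f, \beta \circ f)$, as required.

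Functoriality is then formal. For the identity, $\mathbf J[\mbox{\rm id}_{\mathbf H}, \mathbf L](\alpha) = \alpha \circ \mbox{\rm id}_{\mathbf A} = \alpha = \mbox{\rm id}_{\mathbf J[\mathbf H, \mathbf L]}(\alpha)$. For composition, given lax morphisms $f\colon \mathbf H_1 \to \mathbf H_2$ and $g\colon \mathbf H_2 \to \mathbf H_3$ and any $\alpha \in T_{[\mathbf H_3, \mathbf L]}$, associativity of composition gives $\mathbf J[g \circ f, \mathbf L](\alpha) = \alpha \circ (g \circ f) = (\alpha \circ g) \circ f = \mathbf J[f, \mathbf L](\mathbf J[g, \mathbf L](\alpha))$, so that $\mathbf J[g \circ f, \mathbf L] = \mathbf J[f, \mathbf L] \circ \mathbf J[g, \mathbf L]$, exhibiting the order reversal characteristic of a contravariant functor.

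I expect the single genuinely load-bearing step to be the central inequality, where three ingredients must be orchestrated in the right order: the residuation switching between $*$ and $\rightarrow$, the \emph{direction} of the lax inequality $F_2 \circ f \leq f \circ F_1$, and the monotonicity of $\alpha$. The main pitfall is a direction error — one must instantiate at $x = f(y)$ and not the reverse, and the lax inequality points the useful way precisely because we are precomposing (so the functor is contravariant). Had $f$ been a strict homomorphism the middle step would be an equality; laxness gives only $\leq$, but that is exactly what the inequality in the \Vframe{} homomorphism condition can absorb, which is why the hypothesis of a lax (rather than strict) morphism is sufficient here.
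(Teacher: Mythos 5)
Your proposal is correct and follows essentially the same route as the paper's proof: well-definedness of precomposition because a lax morphism is in particular a \Vmodule{} homomorphism, the key inequality $r_2(\alpha,\beta)\leq r_1(\alpha\circ f,\beta\circ f)$ obtained by residuating, instantiating at $x=f(y)$, and absorbing the lax inequality $F_2(f(y))\leq f(F_1(y))$ through the monotone $\alpha$, followed by formal verification of identities and contravariant composition. Nothing further is needed.
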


\begin{proof} We first show that, for every $\alpha\in T_{[\mathbf H_2, \mathbf L]}$, 
$(\mathbf J[f, \mathbf L](\alpha)\in T_{[\mathbf H_1, \mathbf L]}$.

Let $U\subseteq A_1$, $x\in A_1$ and $v\in V$. We compute: 
\begin{align*}
    (\mathbf J[f, \mathbf L](\alpha))(\bigvee U)&=\alpha(f(\bigvee U))=%
    \bigvee\{\alpha(f(u))\mid u\in U\}\\
    &= \bigvee\{  (\mathbf J[f, \mathbf L](\alpha))(u)\mid u\in U\}
\end{align*}
and 
\begin{align*}
   (\mathbf J[\mathbf H, f](\alpha))(v*x)&=\alpha(f(v*x))= %
   v* \alpha(f(x))=v* (\mathbf J[\mathbf H, f](\alpha))(x).
\end{align*}

Let's verify that $\mathbf J[f, \mathbf L]$ is a \Vframe{} homomorphism.
Let us take $\alpha, \beta \in \mathbf J[\mathbf H_2, \mathbf L]$. 
We have to check that 
$s(\alpha,\beta) \leq r(\mathbf J[f, \mathbf L](\alpha), \mathbf J[f, \mathbf L](\beta))$.

We know that 
\begin{align*}
    s(\alpha,\beta) &= \bigwedge_{x \in A_2} \{\beta(x) \rightarrow \alpha(F_2(x))\}\\
    r(\mathbf J[f, \mathbf L](\alpha), \mathbf J[f, \mathbf L](\beta))&= %
    \bigwedge_{y \in A_1} \{\mathbf J[f, \mathbf L](\beta)(y) \rightarrow \mathbf J[f, \mathbf L](\alpha)(F_1(y))\}
\end{align*}

Let $y\in G_1$. Then $f(y)\in G_2$. We compute:
\begin{align*}
s(\alpha,\beta)\otimes\mathbf J[f, \mathbf L](\beta))(y)&=%
s(\alpha,\beta)\otimes \beta(f(y))\leq  \alpha(F_2(f(y)))\leq \alpha(f(F_1(y)))\\
&=\mathbf J[f, \mathbf L](\alpha))(F_1(y)).
\end{align*}
We conclude $s(\alpha,\beta)\leq \mathbf J[f, \mathbf L](\beta)(y) \rightarrow \mathbf J[f, \mathbf L](\alpha)(F_1(y))$ for all  $y\in G_1$, i.e., 
$s(\alpha,\beta)\leq r(\mathbf J[f, \mathbf L](\alpha), \mathbf J[f, \mathbf L](\beta))$.

Let us check that  $\mathbf J[-, \mathbf L]$ is a functor. 
Let $\mathbf H=(\mathbf A, F)$ be a \VFsemilattice, 
$\mbox{\rm id}_{A}$ the identity morphism of \VFsemilattice{}s on $A$ and 
	$\alpha\in T_{[\mathbf H, \mathbf L]}$. We compute:
	$$
	\mathbf J[\mbox{\rm id}_{A}, \mathbf L](\alpha)=%
	\alpha \circ \mbox{\rm id}_{A}=\alpha=\mbox{\rm id}_{\mathbf J[\mathbf H, \mathbf L]}(\alpha). 
	$$

Let us now prove the composition condition:

Now, let $f\colon{}\mathbf H_1\to\ \mathbf H_2$  and $g\colon{}\mathbf H_2\to\ \mathbf H_3$ 
	be   lax morphisms of \VFsemilattice{}s. We have:
	$$
	\begin{array}{r@{\,\,}c@{\,\,}l}
		\mathbf J[g\circ f,\mathbf L](\alpha)&=&\alpha\circ (g \circ f)=%
		(\alpha\circ g) \circ f=  \mathbf J[g,\mathbf L](\alpha) \circ f\\[0.2cm]
		&=&\mathbf J[f,\mathbf L](\mathbf J[g,\mathbf L](\alpha))=%
		(\mathbf J[f,\mathbf L]\circ \mathbf J[g,\mathbf L])(\alpha).
	\end{array}$$
\end{proof}

\section{Three Adjoint Situations}\label{Three Adjoint Situations}

In this section, we build upon the findings of \cite[Section 4]{Bot} to introduce three adjoint situations. These adjunctions are specifically designed to accommodate the unique properties inherent in quantale-valued structures. They are given by:
\begin{enumerate}
	\item $(\eta, \varepsilon)\colon (\mathbf{J}\otimes -)\dashv (-^{\mathbf{J}})\colon \VSUPH \to \VFSUPL$,
	\item $(\varphi, \psi)\colon (-\otimes \mathbf{H})\dashv (\mathbf{J}[\mathbf{H},-])\colon \VSUPH \to \VJ$, and
	\item $(\nu, \mu)\colon (\mathbf{J}[-,\mathbf{L}])\dashv (\mathbf{L}^{-})\colon \VJ \to {\VFSUPL}^{op}$.
\end{enumerate}

We now briefly explain the intuitive meaning behind each of the three adjoint situations.

\textbf{Adjunction $(\eta, \varepsilon)$:}
The functor $\mathbf{J}\otimes -$ takes a \Vmodule{} with a fuzzy tense operator (an object in $\VFSUPL$) and, using a fixed \Vframe{} $\mathbf{J}$, produces a standard \Vmodule{} (an object in $\VSUPH$). Intuitively, this process adds a temporal aspect, governed by $\mathbf{J}$, to the standard \Vmodule{}. Conversely, the right adjoint $(-^{\mathbf{J}})$ associates with a \Vmodule{} the space of $\mathbf{V}$--valued functions over $\mathbf{J}$, representing generalized fuzzy characteristic functions. This adjoint pair gives a natural isomorphism between
\[\mathrm{Hom}_{\VFSUPL}(\mathbf{J}\otimes \mathbf{L}, \mathbf{M}) \cong \mathrm{Hom}_{\VSUPH}(\mathbf{L}, \mathbf{M}^{\mathbf{J}}),\]
thereby extending the classical connection between the $\mathbf{J}\otimes -$ construction and its function space counterpart from \cite{Bot}.

\textbf{Adjunction $(\varphi, \psi)$:}
In this case, the functor $-\otimes \mathbf{H}$ acts on an object from $\VJ$, forming its tensor product with a fixed \VFsemilattice{} $\mathbf{H}$. This operation effectively "fuzzifies" the structure of a \Vmodule{} by incorporating the tense operator from $\mathbf{H}$. The right adjoint, given by the \Vframe{} construction $\mathbf{J}[\mathbf{H},-]$, recovers a \Vframe{} by gathering suitable $\mathbf{V}$--module homomorphisms that preserve the fuzzy tense structure. This yields a natural bijection between
\[\mathrm{Hom}_{\VSUPH}(\mathbf{L}\otimes \mathbf{H}, \mathbf{M}) \cong \mathrm{Hom}_{\VJ}(\mathbf{L}, \mathbf{J}[\mathbf{H},\mathbf{M}]),\]
which formalizes the interaction between the $-\otimes \mathbf{H}$ construction and the extraction of a frame structure within the quantale-valued context.

\textbf{Adjunction $(\nu, \mu)$:}
The final adjunction formalizes a duality between the construction of \Vframe{}s and the recovery of \Vmodule{} structures. Here, the functor $\mathbf{J}[-,\mathbf{L}]$ associates with each object in $\VFSUPL$ a \Vframe{} constructed from the \Vmodule{} homomorphisms into a fixed \Vmodule{} $\mathbf{L}$. Its left adjoint, denoted by $\mathbf{L}^{-}$, reverses this process by evaluating the frame to reconstruct the original \Vmodule{}. This adjunction is expressed through a natural bijection between
\[
\mathrm{Hom}_{\VFSUPL}(\mathbf{L}^{\mathbf{J}}, \mathbf{H}) \text{ and } \mathrm{Hom}_{\VJ}(\mathbf{J}, \mathbf{J}[\mathbf{H}, \mathbf{L}]),
\]
thereby revealing a dual correspondence between the formation of frames from fuzzy tense operators and the evaluation of these frames to retrieve the corresponding quantale-valued structure.

\begin{theorem}[Generalized version of Theorem 4.1. from \cite{Bot}]
Let $\mathbf J=(T,r)$ be a \Vframe{}. Then:
\begin{enumerate}
\item For an arbitrary \VFsemilattice{} $\mathbf H=(\mathbf A, F)$ there exists a lax morphism 
$\eta _{\mathbf H}\colon \mathbf H\to (\mathbf J\otimes \mathbf H)^{\mathbf J}$ 
of \VFsemilattice{}s defined in such a way that 
$$(\eta _{\mathbf H}(x))(i)=\mbox{\rm n}(j[\mathbf J,\mathbf H])(x_{i=}).$$ 
Moreover, $\eta=(\eta _{\mathbf H}\colon {\mathbf H}\to ({\mathbf J}\otimes {\mathbf H})^{\mathbf J})_{{\mathbf H}\in \indexVFSUPL}$ 
is a natural transformation.

\item For an arbitrary \Vmodule{} $\mathbf L$ there exists a unique morphism 
$\varepsilon_{\mathbf L} \colon \mathbf J\otimes {\mathbf L}^{\mathbf J}\to \mathbf L$ of 
\Vmodule{}s such that the following diagram commutes:
\end{enumerate}

\begin{center}
\begin{tikzpicture}[scale=0.7]

\node (alfa') at (12,0) {$\mathbf L$};
  
   \node (gama') at (0,5) {$(\mathbf L^T)^T$};
   \node (delta') at (12,5) {$\mathbf J\otimes \mathbf L^{\mathbf J}$};
   
   \node (1') at (6,6) {$\mbox{\rm n}(j[\mathbf J,\mathbf L^{\mathbf J}])$};
   \node (2') at (11,2.5) {$\varepsilon_{\mathbf L}$};
   \node (3') at (6,2) {$e_{\mathbf L}$};
 
  \draw [->](gama') -- (alfa');
  \draw [->](delta') -- (alfa');
  \draw [->](gama') -- (delta');

 \end{tikzpicture}
 \end{center}
where $e_{\mathbf L}\colon(\mathbf L^{T})^{T}\to \mathbf L$ is defined by 
$e_{\mathbf L}(\bar{x})=\bigvee _{i\in T} (\bar{x}(i))(i)$ for any $\bar{x}\in (L^{T})^{T}$.
Moreover, $\varepsilon$ is a natural transformation. It can be expressed as:
\begin{align*}
    \varepsilon=(\varepsilon_{\mathbf L} \colon \mathbf J\otimes {\mathbf L}^{\mathbf J}\to \mathbf L), 
\end{align*}
where ${\mathbf L}$ is an element of the category \VSUPH.
\item 3. There exists an adjoint situation 
$(\eta, \varepsilon)\colon (\mathbf  J\otimes -)\dashv (-^{\mathbf  J})\colon %
\VSUPH \to \VFSUPL$. 

\end{theorem}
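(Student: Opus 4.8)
The plan is to take the unit $\eta$ constructed in part~(1) and the counit $\varepsilon$ constructed in part~(2), together with the naturality of each (already asserted there), and to verify the two triangle identities
\begin{align*}
(\varepsilon_{\mathbf L})^{\mathbf J}\circ\eta_{\mathbf L^{\mathbf J}}&=\mbox{\rm id}_{\mathbf L^{\mathbf J}}\qquad(\mathbf L\in\VSUPH),\\
\varepsilon_{\mathbf J\otimes\mathbf H}\circ(\mathbf J\otimes\eta_{\mathbf H})&=\mbox{\rm id}_{\mathbf J\otimes\mathbf H}\qquad(\mathbf H\in\VFSUPL).
\end{align*}
By the standard unit--counit characterisation of adjunctions, these two identities together with the naturality of $\eta$ and $\varepsilon$ are exactly what is needed to conclude $(\mathbf J\otimes -)\dashv(-^{\mathbf J})$ with the stated unit and counit, which is the assertion of part~(3).

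I would dispatch the first identity by a pointwise computation in the function space. Fix $\mathbf L$, an element $x\in L^T$, and an index $i\in T$. Since $-^{\mathbf J}$ acts componentwise (Proposition~\ref{funSSF}) and by the defining formula of $\eta$ in part~(1),
\begin{align*}
\bigl((\varepsilon_{\mathbf L})^{\mathbf J}(\eta_{\mathbf L^{\mathbf J}}(x))\bigr)(i)
&=\varepsilon_{\mathbf L}\bigl((\eta_{\mathbf L^{\mathbf J}}(x))(i)\bigr)
=\varepsilon_{\mathbf L}\bigl(\mbox{\rm n}(j[\mathbf J,\mathbf L^{\mathbf J}])(x_{i=})\bigr).
\end{align*}
The defining triangle for $\varepsilon_{\mathbf L}$ from part~(2) reads $\varepsilon_{\mathbf L}\circ\mbox{\rm n}(j[\mathbf J,\mathbf L^{\mathbf J}])=e_{\mathbf L}$, so the right-hand side equals $e_{\mathbf L}(x_{i=})=\bigvee_{k\in T}(x_{i=}(k))(k)=x(i)$, because the point mass $x_{i=}$ vanishes off $k=i$. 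Hence the composite is the identity map.

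The real work is the second identity, which lives in the quotient $\mathbf J\otimes\mathbf H=\mathbf A^T_{j[\mathbf J,\mathbf H]}$. Here I would first reduce to generators: the nucleus map $\mbox{\rm n}(j[\mathbf J,\mathbf H])\colon\mathbf A^T\to\mathbf J\otimes\mathbf H$ is a surjective join-preserving \Vmodule{} homomorphism and each $\xi\in A^T$ is the join $\bigvee_{i}(\xi(i))_{i=}$, so the elements $\mbox{\rm n}(j[\mathbf J,\mathbf H])(x_{i=})$ join-generate $\mathbf J\otimes\mathbf H$; since both sides of the identity are join-preserving \Vmodule{} homomorphisms, it suffices to test on one such generator. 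I would then chase two diagrams in turn. Proposition~\ref{nucn} applied to $f=\eta_{\mathbf H}$ gives $(\mathbf J\otimes\eta_{\mathbf H})\circ\mbox{\rm n}(j[\mathbf J,\mathbf H])=\mbox{\rm n}(j[\mathbf J,(\mathbf J\otimes\mathbf H)^{\mathbf J}])\circ\eta_{\mathbf H}^{\mathbf J}$, and using $\eta_{\mathbf H}(0)=0$ one gets $\eta_{\mathbf H}^{\mathbf J}(x_{i=})=(\eta_{\mathbf H}(x))_{i=}$; the defining triangle for $\varepsilon_{\mathbf J\otimes\mathbf H}$ (part~(2) with $\mathbf L=\mathbf J\otimes\mathbf H$) then replaces the outer nucleus by $e_{\mathbf J\otimes\mathbf H}$, and
\begin{align*}
\varepsilon_{\mathbf J\otimes\mathbf H}\bigl((\mathbf J\otimes\eta_{\mathbf H})(\mbox{\rm n}(j[\mathbf J,\mathbf H])(x_{i=}))\bigr)
&=e_{\mathbf J\otimes\mathbf H}\bigl((\eta_{\mathbf H}(x))_{i=}\bigr)
=(\eta_{\mathbf H}(x))(i)\\
&=\mbox{\rm n}(j[\mathbf J,\mathbf H])(x_{i=}),
\end{align*}
the last step again by part~(1). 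This returns the chosen generator, so the composite is the identity.

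I expect the main obstacle to be the bookkeeping in this last chase rather than any new inequality: one must keep the two layers of the $(-)^T$ construction apart, so that point masses, the componentwise action of $-^{\mathbf J}$, and the evaluation map $e$ are applied at the correct level, and one must justify the reduction to generators, namely that both composites respect the quotient joins of $\mathbf J\otimes\mathbf H$. Once the generation step is secured, each diagram chase is forced by the universal property packaged into $\mbox{\rm n}(j[-,-])$ (Lemma~\ref{lemprenucleus2}), so no estimate beyond those already available in Proposition~\ref{nucn} and the construction of $\varepsilon$ is required.
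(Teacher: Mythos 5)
Your verification of the two triangle identities is correct and follows essentially the same route as the paper's proof of part~(3): the identity $(\varepsilon_{\mathbf L})^{\mathbf J}\circ\eta_{\mathbf L^{\mathbf J}}=\mbox{\rm id}$ is the same pointwise computation via $\varepsilon_{\mathbf L}\circ\mbox{\rm n}(j[\mathbf J,\mathbf L^{\mathbf J}])=e_{\mathbf L}$ and $e_{\mathbf L}(x_{i=})=x(i)$, and the identity in $\mathbf J\otimes\mathbf H$ is the same chase through the commuting square of Proposition~\ref{nucn} applied to $\eta_{\mathbf H}$. The paper evaluates on an arbitrary $\overline x\in A^T$ and only invokes the decomposition $\overline x=\bigvee_{i}\overline x(i)_{i=}$ together with join-preservation of the nucleus in its final step, which is just a repackaging of your reduction to generators; nothing substantive differs there.

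The gap is that you prove only part~(3). Parts~(1) and~(2) are part of the statement, and your argument for~(3) quotes them as inputs rather than establishing them. For~(1) one must show that the displayed formula for $\eta_{\mathbf H}$ really defines a \emph{lax} morphism, i.e.\ $F^{\mathbf J}\circ\eta_{\mathbf H}\leq\eta_{\mathbf H}\circ F$; this rests on the chain $(F^{\mathbf J}(\eta_{\mathbf H}(x)))(i)=\mbox{\rm n}(j[\mathbf J,\mathbf H])(x_{ir})\leq\mbox{\rm n}(j[\mathbf J,\mathbf H])(x_{ir}\vee F(x)_{i=})=\mbox{\rm n}(j[\mathbf J,\mathbf H])(F(x)_{i=})$, which uses property (N3) of the nucleus (to pull $r(i,k)*(-)$ inside) and the defining pairs of $[\mathbf J,\mathbf H]$ --- this inequality is precisely why the codomain of $\eta$ is \VFSUPL{} rather than \VFSUPH. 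For~(2), the existence of $\varepsilon_{\mathbf L}$ is not automatic from the stated diagram: before Lemma~\ref{lemprenucleus2} can be applied to produce the unique factorization through the quotient, one must check that $e_{\mathbf L}$ coequalizes the generating pairs of $j[\mathbf J,\mathbf L^{\mathbf J}]$, i.e.\ that $e_{\mathbf L}(z_{ir}\vee F^{\mathbf J}(z)_{i=})=e_{\mathbf L}(F^{\mathbf J}(z)_{i=})$ for all $z\in L^T$ and $i\in T$, which reduces to the computation $e_{\mathbf L}(z_{ir})=\bigvee_{k\in T}r(i,k)*z(k)=(F^{\mathbf J}(z))(i)=e_{\mathbf L}(F^{\mathbf J}(z)_{i=})$. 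Both naturality claims likewise require their own (routine but nontrivial) diagram chases. As written, the proposal is therefore a correct proof of the adjunction \emph{modulo} parts~(1) and~(2), not of the full theorem.
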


\begin{proof}
{(a):} Let us consider an arbitrary \VFsemilattice{} $\mathbf H=(\mathbf A, F)$.  Evidently, 
$\eta _{\mathbf H}$ preserves arbitrary joins. 
Assume that $x\in A$ and $i\in J$.  We compute: 

\begin{align*}
(F^{\mathbf J} (\eta _{\mathbf H}(x)))(i)&%
=\bigvee \{r(i,k)*\eta_{\mathbf H}(x)(k)) \mid k\in T\}\\%
&%
=\bigvee \{r(i,k)*\mbox{\rm n}(j[\mathbf J,\mathbf H])(x_{k=}) \mid k\in T\}\\%
&%
=\bigvee \{\mbox{\rm n}(j[\mathbf J,\mathbf H])(r(i,k)*x_{k=}) \mid k\in T\}\\%
&%
=\mbox{\rm n}(j[\mathbf J,\mathbf H])\left(\bigvee \{r(i,k)*x_{k=} \mid k\in T\}\right) =
\mbox{\rm n}(j[\mathbf J,\mathbf H])(x_{ir})\\
&\leq\mbox{\rm n}(j[\mathbf J,\mathbf H])%
(x_{ir}\vee F(x)_{i=})%
=\mbox{\rm n}(j[\mathbf J,\mathbf H])(F(x)_{i=})\\  &=(\eta _{\mathbf H}(F(x)))(i).
\end{align*}

Therefore $F^{\mathbf J} \circ \eta _{\mathbf H}\leq \eta _{\mathbf H} \circ F$  and hence  
$\eta _{\mathbf H}$ is a lax morphism. Now, let us assume that 
$\mathbf H_1=(\mathbf A_1, F_1)$ and  %
$\mathbf H_2=(\mathbf A_2, F_2)$ are \VFsemilattice{}s, and that 
$f\colon \mathbf H_1\to \mathbf  H_2$ is a  lax morphism of 
\VFsemilattice{}s. 
We have to show that the following diagram commutes:
\begin{center}
\begin{tikzpicture}

\node (alfa') at (7,5) {$\mathbf H_1$};
  \node (beta') at (7,2) {$(\mathbf J\otimes \mathbf H_1)^{\mathbf J}$};
   \node (gama') at (12,2) {$(\mathbf T\otimes \mathbf H_2)^{\mathbf J}$};
   \node (delta') at (12,5) {$\mathbf H_2$};
   
   \node (1') at (9.5,4.5) {$f$};
   \node (2') at (9.5,2.5) {$(\mathbf J\otimes f)^{\mathbf J}$};
   \node (3') at (7.5,3.55) {$\eta _{\mathbf H_1}$};
   \node (4') at (11.5,3.55) {$\eta _{\mathbf H_2}$};
   
  \draw [->](alfa') -- (delta');
  \draw [->](beta') -- (gama');
  \draw [->](delta') -- (gama');
  \draw [->](alfa') -- (beta');
 \end{tikzpicture}
 \end{center}
 Assume that $x\in A_1$ and $i\in J$. 
 We compute (using first Proposition \ref{funSSF}, 
 then Proposition \ref{nucn} and again Proposition \ref{funSSF}): 
 
$$
\begin{array}{@{}r@{\,}c@{\,}l}
  (((\mathbf J\otimes f)^{\mathbf J}\circ\eta _{\mathbf H_1})(x))(i)&=&%
  (\mathbf J\otimes f)(\eta _{\mathbf H_1}(x))(i)=(\mathbf J\otimes f)%
  (\mbox{\rm n}(j[\mathbf J,\mathbf H_1])(x_{i=}))\\[0.2cm]
  &=& \mbox{\rm n}(j[\mathbf J,\mathbf H_2])(f^{\mathbf J}(x_{i=}))=%
  \mbox{\rm n}(j[\mathbf J,\mathbf H_2])(f(x)_{i=})\\[0.2cm]
  &=& ((\eta _{\mathbf H_2}\circ f)(x))(i).
  \end{array}
$$

(b): Let $\mathbf L$  be a  \Vmodule{}. It it is transparent that $e_{\mathbf L}$ preserves arbitrary joins. 

Assume that $\bar{x}\in (L^T)^T$,  $v\in V$ and $i, j\in T$.  We compute: 
\begin{align*}
(v*e_{\mathbf L})(\bar{x})= v*e_{\mathbf L}(\bar{x})=%
v*\bigvee _{i\in T} (\bar{x}(i))(i)=%
\bigvee _{i\in T} ((v*\bar{x})(i))(i)=%
e_{\mathbf L}(v*\bar{x}).
\end{align*}

Assume that $z\in L^T$  and $i\in T$.
 $$
 \begin{array}{@{}r@{\,}c@{\,}l}
 e_{\mathbf L}(z_{ir})&=&\bigvee _{k\in T} (z_{ir}(k))(k) =\bigvee _{k\in T} (r(i,k)*z)(k) =%
 \bigvee _{k\in T} r(i,k)*z(k)
 \\[0.2cm]
 &=&(F^{\mathbf J}(z))(i)=%
 \bigvee_{j\in T}\left(\left(F^{\mathbf J}(z)_{i=}\right)(j)\right)(j)=%
 e_{\mathbf L}(F^{\mathbf J}(z)_{i=}).
  \end{array}$$

By Lemma \ref{lemprenucleus2}   there is a unique morphism 
$\varepsilon_{\mathbf L}\colon \mathbf J\otimes \mathbf L^{\mathbf J}\to\ \mathbf L$
of \Vmodule{}s such that 
$e=\varepsilon_{\mathbf L}\circ %
\mbox{\rm n}(j[\mathbf J,{\mathbf L}^{\mathbf J}])$.  

Let us now consider a morphism $f\colon\mathbf L_1\to \mathbf L_2$ of \Vmodule{}s.  
We have to prove that the following diagram commutes:

\begin{center}
\begin{tikzpicture}

\node (alfa') at (7,5) {$\mathbf J\otimes \mathbf L_1^{\mathbf J}$};
  \node (beta') at (7,2) {$\mathbf L_1$};
   \node (gama') at (12,2) {$\mathbf L_2$};
   \node (delta') at (12,5) {$\mathbf J\otimes\mathbf  L_2^{\mathbf J}$};
   
   \node (1') at (9.5,4.5) {$\mathbf J\otimes f^{\mathbf J}$};
   \node (2') at (9.5,2.5) {$f$};
   \node (3') at (7.7,3.55) {$\varepsilon_{\mathbf L_1}$};
   \node (4') at (11.2,3.55) {$\varepsilon_{\mathbf L_2}$};
   
  \draw [->](alfa') -- (delta');
  \draw [->](beta') -- (gama');
  \draw [->](delta') -- (gama');
  \draw [->](alfa') -- (beta');
 \end{tikzpicture}
 \end{center}

Let $\overline{x}\in ({\mathbf L_1}^{T})^{T}$. We compute:
 $$
 \begin{array}{@{}r@{}c@{\,}l}
(\varepsilon_{\mathbf L_2}\circ  (\mathbf J\otimes f^{\mathbf J}))&(\mbox{\rm n}(j[\mathbf J,{\mathbf L_1}^{\mathbf J}])(\overline{x}))&=
\varepsilon _{\mathbf L_2}(\mbox{\rm n}(j[\mathbf J,{\mathbf L_2}^{\mathbf J}]({f^{\mathbf J}}^{\mathbf J}(\overline{x})))=
e_{\mathbf L_2}({f^{\mathbf J}}^{\mathbf J}(\overline{x}))\\[0.2cm]
&\multicolumn{2}{@{}l}{=\bigvee _{i\in T}(({f^{\mathbf J}}^{\mathbf J}(\overline{x}))(i))(i)=%
\bigvee _{i\in T}({f^{\mathbf J}}(\overline{x}(i)))(i)}\\[0.2cm]
&\multicolumn{2}{@{}l}{=\bigvee _{i\in T}{f}(\overline{x}(i)(i))=f(\bigvee _{i\in T}\overline{x}(i)(i))=%
f(e_{\mathbf L_1}(\overline{x}))}\\[0.2cm]
&\multicolumn{2}{@{}l}{=f((\varepsilon_{\mathbf L_1}\circ %
\mbox{\rm n}(j[\mathbf J,{\mathbf L_1}^{\mathbf J}])(\overline{x}))=%
(f\circ \varepsilon_{\mathbf L_1})%
(\mbox{\rm n}(j[\mathbf J,{\mathbf L_1}^{\mathbf J}])(\overline{x})).}
 \end{array}$$ 

(c): Let  $\mathbf H=(\mathbf A, F)$ be a \VFsemilattice{} 
and  $\mathbf L$   a  \Vmodule{}.
We will prove the commutativity of following diagrams:

\noindent
\begin{tabular}{@{}c c@{}c}
\begin{tikzpicture}[scale=0.35]

\node (alfa') at (12,0) {$\mathbf  J\otimes \mathbf  H$};
  
   \node (gama') at (0,5) {$\mathbf  J\otimes\mathbf  H$};
   \node (delta') at (12,5) {$\mathbf  J\otimes (\mathbf  J\otimes \mathbf  H)^{\mathbf J}$};
   
   \node (1') at (6,5.7) {$\mathbf  J\otimes \eta_{\mathbf  H}$};
   \node (2') at (13.7,2.7) {$\varepsilon_{\mathbf  J\otimes\mathbf  H}$};
   \node (3') at (6,1.5) {$\mbox{\rm id}_{\mathbf  J\otimes \mathbf  H}$};

  \draw [->](gama') -- (alfa');
  \draw [->](delta') -- (alfa');
  \draw [->](gama') -- (delta');
  
 \end{tikzpicture}
 &&
 \begin{tikzpicture}[scale=0.35]

\node (alfa') at (12,0) {${{\mathbf L}^{\mathbf J}}$};
  
   \node (gama') at (0,5) {${{\mathbf L}^{\mathbf J}}$};
   \node (delta') at (12,5) {$(\mathbf  J\otimes {{\mathbf L}^{\mathbf J}})^{\mathbf  J}$};
   
   \node (1') at (6,5.7) {$\eta _{\mathbf  L^{\mathbf  J}}$};
   \node (2') at (13.7,2.5) {$\varepsilon_{{\mathbf L}^{\mathbf J}}$};
   \node (3') at (6,1.5) {$\mbox{\rm id}_{{\mathbf L}^{\mathbf J}}$};

  \draw [->](gama') -- (alfa');
  \draw [->](delta') -- (alfa');
  \draw [->](gama') -- (delta');
  
 \end{tikzpicture}
\end{tabular}

For the first diagram assume $\overline{x}\in \mathbf A^T$.  From 
Proposition \ref{nucn}
we know that the following diagram commutes (when necessary we forget that some morphisms are actually 
from \VFSUPL{} and we work entirely in \VSUPH:

\begin{center}
\begin{tikzpicture}[scale=0.6]

\node (alfa') at (1,5) {$\mathbf A^{T}$};
  \node (beta') at (1,0) {$((\mathbf J\otimes \mathbf H)^{T})^{T}$};
   \node (gama') at (12,0) {$\mathbf J\otimes (\mathbf J\otimes \mathbf H)^{\mathbf J}$};
   \node (delta') at (12,5) {$\mathbf J\otimes \mathbf H$};
    \node (deltax') at (20,0) {$\mathbf J\otimes \mathbf H$};
      \node (deltaxz') at (20,-5) {$\mathbf J\otimes \mathbf H$};
   
   \node (1') at (7.5,5.5) {$\mbox{\rm n}(j[\mathbf J,\mathbf H])$};
   \node (2') at (6.5,0.5) {$\mbox{\rm n}(j[\mathbf J, (\mathbf J\otimes \mathbf H)^{\mathbf J}])$};
   \node (3') at (2,2.5) {$(\eta _{\mathbf H})^{\mathbf J}$};
   \node (4') at (13.7,2.5) {$\mathbf J\otimes \eta _{\mathbf H}$};
      \node (5') at (16.25,0.5) {$\varepsilon_{\mathbf  J\otimes\mathbf  H}$};
       \node (6') at (21.4,-2.5) {$\mbox{\rm id}_{\mathbf  J\otimes \mathbf  H}$};
        \node (7') at (14.35,-2.85) {$e_{\mathbf J\otimes {\mathbf H}}$};
         \node (8') at (17.5,2.6) {$\mbox{\rm id}_{\mathbf  J\otimes \mathbf  H}$};
   
  \draw [->](alfa') -- (delta');
  \draw [->](beta') -- (gama');
  \draw [->](delta') -- (gama');
  \draw [->](alfa') -- (beta');
    \draw [->](gama') -- (deltax');
     \draw [->](deltax') -- (deltaxz');
        \draw [->](beta') -- (deltaxz');
        \draw[densely dashed,->] (delta') --  (deltax');
 \end{tikzpicture}
 \end{center}

\vskip-0.5cm

We compute (using the same steps as in \cite[Theorem 4.1.]{Bot}):
 $$
 \begin{array}{@{}r@{}l@{}l}
\varepsilon_{\mathbf  J\otimes\mathbf  H}((\mathbf  J\otimes \eta_{\mathbf  H})&(\mbox{\rm n}(j[\mathbf J,{\mathbf A}^{\mathbf J}])(\overline{x})))&=%
(\varepsilon_{\mathbf  J\otimes\mathbf  H}\circ %
\mbox{\rm n}(j[\mathbf J, (\mathbf J\otimes \mathbf H)^{\mathbf J}]))%
((\eta _{\mathbf H})^{\mathbf J}(\overline{x}))\\[0.2cm]
&\multicolumn{2}{@{}l}{=%
e_{\mathbf  J\otimes\mathbf  H}((\eta _{\mathbf H})^{\mathbf J}(\overline{x}))%
=\bigvee_{i\in T} ((\eta _{\mathbf H})^{\mathbf J}(\overline{x})(i))(i)}\\[0.2cm]
&\multicolumn{2}{@{}l}{=%
\bigvee_{i\in T} (\eta _{\mathbf H}(\overline{x}(i)))(i)=%
\bigvee_{i\in T} \mbox{\rm n}(j[\mathbf J,\mathbf H])(\overline{x}(i)_{i=})} \\[0.2cm]
&\multicolumn{2}{@{}l}{=%
\mbox{\rm n}(j[\mathbf J,\mathbf H])(\bigvee_{i\in T} \overline{x}(i)_{i=}) =%
\mbox{\rm n}(j[\mathbf J,\mathbf H])(\overline{x})}.
  \end{array}$$ 

Hence $\varepsilon_{\mathbf  J\otimes\mathbf  H}\circ (\mathbf  J\otimes \eta_{\mathbf  H})=%
\mbox{\rm id}_{\mathbf  J\otimes \mathbf  H}$.

To show the commutativity of the second diagram assume that $\overline{x}\in {\mathbf L}^{T}$ 
and $i\in T$. We compute (again using the corresponding steps as in \cite[Theorem 4.1.]{Bot}): 
 $$
 \begin{array}{@{}r@{}l@{}l}
(((\varepsilon _{\mathbf  L})^{\mathbf  J} \circ  \eta_{{\mathbf L}^{\mathbf J}})&(\overline{x}))(i)&=%
((\varepsilon _{\mathbf  L})^{\mathbf  J}(\eta_{{\mathbf L}^{\mathbf J}}(\overline{x})))(i)=%
\varepsilon _{\mathbf  L}(\eta_{{\mathbf L}^{\mathbf J}}(\overline{x})(i))\\[0.2cm]%
&\multicolumn{2}{@{}l}{=%
\varepsilon _{\mathbf  L}(\mbox{\rm n}(j[\mathbf J,{\mathbf L}^{\mathbf J}])(\overline{x}_{i=}))
=e_{{\mathbf L}}(\overline{x}_{i=})=\bigvee_{k\in T} \overline{x}_{i=}(k)(k)=\overline{x}(i).}
  \end{array}$$ 
  
  We conclude that $(\varepsilon _{\mathbf  L})^{\mathbf  J} \circ  \eta_{{\mathbf L}^{\mathbf J}}=%
  \mbox{\rm id}_{{\mathbf  L}^{\mathbf  J}}$.

\end{proof}

\begin{theorem}  [Generalized version of Theorem 4.2. from \cite{Bot}] \label{G42}
 Let  $\mathbf H=(\mathbf A, F)$ be a \VFsemilattice{}. Then:
\begin{enumerate}
\item For an arbitrary \Vframe{} $\mathbf J=(T,r)$, there exists a unique homomorphism of \Vframe{}s 
$\varphi_{\mathbf J}\colon \mathbf J\to {\mathbf J}[\mathbf H, \mathbf J\otimes \mathbf H]$ defined for arbitrary $x\in A$ and $i\in T$ in such a way that 
$$(\varphi_{\mathbf J}(i))(x)=\mbox{\rm n}(j[\mathbf J,{\mathbf H}])(x_{i=}).$$ 
Moreover,  $\varphi=(\varphi_{\mathbf J}\colon %
{\mathbf J}\to {\mathbf J}[\mathbf H, \mathbf J\otimes \mathbf H])_{\mathbf J\in {\mathbb J}}$ 
is a natural transformation  between the identity functor on  $V$-$\mathbb J$ and the endofunctor  
${\mathbf J}[\mathbf H, -\otimes \mathbf H]$.

\item For an arbitrary \Vmodule{} $\mathbf L$ there exists a unique \Vmodule{} morphism 
$\psi_{\mathbf L} \colon {\mathbf J}[\mathbf H, \mathbf L]\otimes {\mathbf H}\to \mathbf L$ 
such that the following diagram commutes:

\begin{center}
\begin{tikzpicture}[scale=0.6]

\node (alfa') at (12,0) {$\mathbf L$};
  
   \node (gama') at (0,5) {${\mathbf G}^{{T}_{[\mathbf H, \mathbf L]}}$};
   \node (delta') at (12,5) {$\mathbf {\mathbf J}[\mathbf H, \mathbf L]\otimes \mathbf H$};
   
   \node (1') at (6,4.3) {$\mbox{\rm n}(j[{\mathbf J}[\mathbf H, \mathbf L],\mathbf H])$};
   \node (2') at (11,2.5) {$\psi_{\mathbf L}$};
   \node (3') at (6,2) {$f_{\mathbf L}$};

  \draw [->](gama') -- (alfa');
  \draw [->](delta') -- (alfa');
  \draw [->](gama') -- (delta');
  
 \end{tikzpicture}
 \end{center}
 
\noindent{}where $f_{\mathbf L}\colon {\mathbf G}^{{T}_{[\mathbf H, \mathbf L]}}\to \mathbf L$ 
 is defined by $f_{\mathbf L}({x})=\bigvee _{\alpha\in {{\mathbf J}[\mathbf H, \mathbf L]}} %
 \alpha({x}(\alpha))$ for any ${x}\in {G}^{{T}_{[\mathbf H, \mathbf L]}}$. Moreover, 
 $\psi=(\psi_{\mathbf L} \colon {\mathbf J}[\mathbf H, %
 \mathbf L]\otimes {\mathbf H}\to \mathbf L)_{\mathbf L\in \mathbb S}$ is a natural transformation 
 between the endofunctor ${\mathbf J}[\mathbf H,  -]\otimes {\mathbf H}$ and the identity functor on $V$-$\mathbb S$.

\item There exists an adjoint situation 
$(\varphi, \psi)\colon (-\otimes \mathbf  H)\dashv {\mathbf  J}[\mathbf  H,-])\colon %
$V$-\mathbb S \to $V$-\mathbb J$. 
\end{enumerate}

\end{theorem}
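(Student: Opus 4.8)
The plan is to follow the same three-part template as the preceding theorem: first build the unit $\varphi$, then the counit $\psi$, and finally check the two triangle identities. Throughout I would work with the explicit generators $\mbox{\rm n}(j[\mathbf J,\mathbf H])(x_{i=})$ of $\mathbf J\otimes\mathbf H$ and with the elements $\alpha\in T_{[\mathbf H,\mathbf L]}=\VSUPH(\mathbf A,\mathbf L)$ of the constructed \Vframe{}s, reducing every identity to a computation on such representatives and then invoking join-preservation. I will write $*_{\otimes}$ for the action of $\mathbf J\otimes\mathbf H$, which by construction equals $\mbox{\rm n}(j[\mathbf J,\mathbf H])\circ *^{T}$, and I will repeatedly use that for a nucleus $\mbox{\rm n}(v*^{T}\mbox{\rm n}(a))=\mbox{\rm n}(v*^{T}a)$ (from (N3) and idempotency) and $\mbox{\rm n}(\bigvee_s a_s)=\mbox{\rm n}(\bigvee_s\mbox{\rm n}(a_s))$.

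For part (a), I would first check that each $\varphi_{\mathbf J}(i)$ genuinely lies in $T_{[\mathbf H,\mathbf J\otimes\mathbf H]}$: preservation of joins follows from $(\bigvee_s x_s)_{i=}=\bigvee_s(x_s)_{i=}$ together with the closure identity above, and preservation of the action from $(v*x)_{i=}=v*^{T}x_{i=}$ combined with $v*_{\otimes}\mbox{\rm n}(x_{i=})=\mbox{\rm n}(v*^{T}x_{i=})$. The substantive step is to show $\varphi_{\mathbf J}$ is a \Vframe{} homomorphism, i.e.\ $r(i,k)\le\bigwedge_{x}\{\varphi_{\mathbf J}(k)(x)\rightarrow\varphi_{\mathbf J}(i)(F(x))\}$, which amounts to $r(i,k)*_{\otimes}\varphi_{\mathbf J}(k)(x)\le\varphi_{\mathbf J}(i)(F(x))$ for every $x$. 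Here I would compute $r(i,k)*_{\otimes}\mbox{\rm n}(x_{k=})=\mbox{\rm n}((r(i,k)*x)_{k=})$, observe $(r(i,k)*x)_{k=}\le x_{ir}$ pointwise (since $x_{ir}(k)=r(i,k)*x$), and then use the defining relation $\mbox{\rm n}(x_{ir}\vee F(x)_{i=})=\mbox{\rm n}(F(x)_{i=})$ of the nucleus to conclude $\mbox{\rm n}((r(i,k)*x)_{k=})\le\mbox{\rm n}(x_{ir})\le\mbox{\rm n}(F(x)_{i=})=\varphi_{\mathbf J}(i)(F(x))$. Naturality of $\varphi$ in $\mathbf J$ would then be verified on an object $i$ and argument $x$, using $\mathbf J[\mathbf H,t\otimes\mathbf H](\gamma)(x)=(t\otimes\mathbf H)(\gamma(x))$ from Theorem \ref{funSJrel} together with $(t\otimes\mathbf H)\circ\mbox{\rm n}(j[\mathbf J_1,\mathbf H])=\mbox{\rm n}(j[\mathbf J_2,\mathbf H])\circ t^{\rightarrow}$ and $t^{\rightarrow}(x_{i=})=x_{t(i)=}$ from Proposition \ref{proptJfac}.

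For part (b), I would check that $f_{\mathbf L}$ is a \Vmodule{} homomorphism (joins and action pass through because every $\alpha$ is a module morphism) and then verify that $f_{\mathbf L}$ collapses the generating pairs of $j[\mathbf J[\mathbf H,\mathbf L],\mathbf H]$, which is exactly the hypothesis needed to apply Lemma \ref{lemprenucleus2} to \Vmodule{}s. Concretely, for a generating pair $(x_{\alpha r}\vee F(x)_{\alpha=},F(x)_{\alpha=})$ I would compute $f_{\mathbf L}(x_{\alpha r})=\bigvee_{\beta}r(\alpha,\beta)*\beta(x)\le\alpha(F(x))$, the inequality being the residuation property $r(\alpha,\beta)*\beta(x)\le\alpha(F(x))$ built into $r(\alpha,\beta)=\bigwedge_{x}\{\beta(x)\rightarrow\alpha(F(x))\}$, together with $f_{\mathbf L}(F(x)_{\alpha=})=\alpha(F(x))$; hence both entries have the same image $\alpha(F(x))$ under $f_{\mathbf L}$. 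Lemma \ref{lemprenucleus2} then yields the unique $\psi_{\mathbf L}$ with $\psi_{\mathbf L}\circ\mbox{\rm n}(j[\mathbf J[\mathbf H,\mathbf L],\mathbf H])=f_{\mathbf L}$, and naturality of $\psi$ would follow from the uniqueness clause of that lemma, exactly as in Propositions \ref{proptJfac} and \ref{nucn}.

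For part (c), I would verify the two triangle identities on generators. Using $(\varphi_{\mathbf J})^{\rightarrow}(y_{i=})=y_{\varphi_{\mathbf J}(i)=}$ and the defining equations of $\varphi_{\mathbf J}\otimes\mathbf H$ and $\psi_{\mathbf J\otimes\mathbf H}$, the first triangle reduces to $\psi_{\mathbf J\otimes\mathbf H}(\mbox{\rm n}(j[\mathbf J[\mathbf H,\mathbf J\otimes\mathbf H],\mathbf H])(y_{\varphi_{\mathbf J}(i)=}))=f_{\mathbf J\otimes\mathbf H}(y_{\varphi_{\mathbf J}(i)=})=\varphi_{\mathbf J}(i)(y)=\mbox{\rm n}(j[\mathbf J,\mathbf H])(y_{i=})$, giving $\psi_{\mathbf J\otimes\mathbf H}\circ(\varphi_{\mathbf J}\otimes\mathbf H)=\mbox{\rm id}$. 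For the second, evaluated on $\alpha\in T_{[\mathbf H,\mathbf L]}$ and $x\in A$, I would use $\mathbf J[\mathbf H,\psi_{\mathbf L}](\gamma)(x)=\psi_{\mathbf L}(\gamma(x))$ (Theorem \ref{funSJrel}) and the defining equation of $\psi_{\mathbf L}$ to obtain $\psi_{\mathbf L}(\mbox{\rm n}(j[\mathbf J[\mathbf H,\mathbf L],\mathbf H])(x_{\alpha=}))=f_{\mathbf L}(x_{\alpha=})=\alpha(x)$, so $\mathbf J[\mathbf H,\psi_{\mathbf L}]\circ\varphi_{\mathbf J[\mathbf H,\mathbf L]}=\mbox{\rm id}$. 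The main obstacle is the frame-homomorphism inequality in part (a): it is the one place where the residuated structure of the relation on $\mathbf J[\mathbf H,\mathbf J\otimes\mathbf H]$, the module action on $\mathbf J\otimes\mathbf H$, and the defining relations of the nucleus must be made to interact, and getting the chain $r(i,k)*_{\otimes}\mbox{\rm n}(x_{k=})=\mbox{\rm n}((r(i,k)*x)_{k=})\le\mbox{\rm n}(x_{ir})\le\mbox{\rm n}(F(x)_{i=})$ in the right order is the delicate computation on which the whole adjunction rests.
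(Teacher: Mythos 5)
Your proposal is correct and follows essentially the same route as the paper's proof: the frame-homomorphism inequality in part (a) via $r(i,k)*\mbox{\rm n}(j[\mathbf J,\mathbf H])(x_{k=})=\mbox{\rm n}(j[\mathbf J,\mathbf H])(r(i,k)*x_{k=})\leq\mbox{\rm n}(j[\mathbf J,\mathbf H])(x_{ir})\leq\mbox{\rm n}(j[\mathbf J,\mathbf H])(F(x)_{i=})$, the application of Lemma \ref{lemprenucleus2} to obtain $\psi_{\mathbf L}$ after checking that $f_{\mathbf L}$ identifies the generating pairs, and the verification of both triangle identities on the generators $x_{i=}$ are all exactly the paper's computations. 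The only noteworthy difference is that your part (b) computation $f_{\mathbf L}(x_{\alpha r})=\bigvee_{\beta}r(\alpha,\beta)*\beta(x)\leq\alpha(F(x))$ states the quantale-valued residuation argument more explicitly than the paper's written version, which at that point still uses the crisp-relation notation inherited from the two-element case.
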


\begin{proof}
{(a):}  We have to show that our definition is correct. Assume $X\subseteq A$, $x\in A$,  $v\in V$ and $i\in I$. 
We have: 
$$
 \begin{array}{@{}r@{\,}c@{\,}l}
(\varphi_{\mathbf J}(i))\left(\bigvee X\right)&=&%
\mbox{\rm n}(j[\mathbf J,{\mathbf H}])\left(\left(\bigvee X\right)_{i=}\right)=%
\mbox{\rm n}(j[\mathbf J,{\mathbf H}])\left(\bigvee\{x_{i=}\mid x\in X\}\right)\\[0.2cm]
&=&%
\bigvee\{\mbox{\rm n}(j[\mathbf J,{\mathbf H}])\left(x_{i=}\right)\mid x\in X\}=%
\bigvee\{(\varphi_{\mathbf J}(i))\left(x\right)\mid x\in X\}
\end{array}
$$

and 

$$
 \begin{array}{@{}r@{\,}c@{\,}l}
(\varphi_{\mathbf J}(i))\left(v*x\right)&=&%
\mbox{\rm n}(j[\mathbf J,{\mathbf H}])\left(\left(v*x\right)_{i=}\right)=%
\mbox{\rm n}(j[\mathbf J,{\mathbf H}])\left(v* x_{i=}\right)\\[0.2cm]
&=&%
v*\mbox{\rm n}(j[\mathbf J,{\mathbf H}])\left(x_{i=}\right)=%
v*(\varphi_{\mathbf J}(i))\left(x\right).
\end{array}
$$

Hence $\varphi_{\mathbf J}(i)\in {T}_{[\mathbf H, \mathbf J\otimes \mathbf H]}$. 

Now let $i, j\in T$, $x\in A$. We compute:
\begin{align*}
r(i,j)*(\varphi_{\mathbf J} (j))(x)&= r(i,j)*\mbox{\rm n}(j[\mathbf J,{\mathbf H}])(x_{j=})%
= \mbox{\rm n}(j[\mathbf J,{\mathbf H}])(r(i,j)*x_{j=})\\
&\leq \mbox{\rm n}(j[\mathbf J,{\mathbf H}])(x_{ir})\leq %
\mbox{\rm n}(j[\mathbf J,{\mathbf H}])(x_{ir}\vee F(x)_{i=})\\
&=\mbox{\rm n}(j[\mathbf J,{\mathbf H}])(F(x)_{i=})=%
(\varphi_{\mathbf J}(i))(F(x)).
\end{align*}
Hence  $r(i,j) \leq \bigwedge_{x \in A} %
\big(\varphi_{\mathbf J}(j)(x)\rightarrow \varphi_{\mathbf J}(i)(F(x))\big)= r'(\varphi_{\mathbf J}(i),\varphi_{\mathbf J}(j))$
and $\varphi_{\mathbf J}$ is a \Vframe{} homomorphism.

Recall that the endofunctor 
${\mathbf J}[\mathbf H,  -]\otimes {\mathbf H}$ on $V$-$\mathbb J$ is a composition of functors $-\otimes {\mathbf H}$ and 
 ${\mathbf J}[\mathbf H,  -]$. 

Now, let  $t\colon{}\mathbf J_1\to \mathbf J_2$ be a homomorphism of %
\Vframe{}s. We have to show that the following diagram commutes:
\begin{center}
\begin{tikzpicture}
\node (alfa') at (7,5) {$\mathbf J_1$};
  \node (beta') at (7,2) {${\mathbf J}[\mathbf  H,\mathbf J_1\otimes \mathbf H]$};
   \node (gama') at (13,2) {${\mathbf J}[\mathbf H, \mathbf J_2\otimes \mathbf H]$};
   \node (delta') at (13,5) {$\mathbf J_2$};
   
   \node (1') at (9.5,4.5) {$t$};
   \node (2') at (10.05,2.5) {${\mathbf J}[\mathbf H,t\otimes \mathbf H]$};
   \node (3') at (7.5483,3.55) {$\varphi_{\mathbf J_1}$};
   \node (4') at (12.42,3.55) {$\varphi_{\mathbf J_2}$};
   
  \draw [->](alfa') -- (delta');
  \draw [->](beta') -- (gama');
  \draw [->](delta') -- (gama');
  \draw [->](alfa') -- (beta');
 \end{tikzpicture}
\end{center}

Let $i\in T_1$, $k\in T_2$ and $x\in A$. We obtain 
\begin{align*}
(t^{\rightarrow}(x_{i=}))(k)=\bigvee\{x_{i=}(j)\mid t(j)=k\}=x_{t(i)=}(k).
\end{align*}
Hence $t^{\rightarrow}(x_{i=})=x_{t(i)=}$. We compute: 
$$
 \begin{array}{@{}r@{}l@{}l}
(({\mathbf J}[\mathbf H,t\otimes \mathbf H]\circ \varphi_{\mathbf J_1})&(i))(x)&%
=\left({\mathbf J}[\mathbf  H, t\otimes \mathbf H](\varphi_{\mathbf J_1}(i))\right)(x)
\\[0.2cm]
&\multicolumn{2}{@{}l}{=\left((t\otimes \mathbf H)\circ (\varphi_{\mathbf J_1}(i))\right)(x)%
=\left(t\otimes \mathbf H\right)\left((\varphi_{\mathbf J_1}(i))(x)\right)}\\[0.2cm]
&\multicolumn{2}{@{}l}{=%
\left(t\otimes \mathbf H\right)\left(\mbox{\rm n}(j[\mathbf J_1,{\mathbf H}])(x_{i=})\right)%
=\mbox{\rm n}(j[\mathbf J_2,{\mathbf H}])(x_{t(i)=})} \\[0.2cm]
&\multicolumn{2}{@{}l}{=\left(\varphi_{\mathbf J_2}(t(i))\right)(x)
=\left((\varphi_{\mathbf J_2}\circ t)(i)\right)(x).}
\end{array}$$

{(b):}   It is transparent that $f_{\mathbf L}$ preserves arbitrary joins. 

Let us denote the relation $S_{[\mathbf H,\mathbf L]}$ as $S^{\bullet}$ and let $x\in G$, $\alpha \in T_{[\mathbf H,\mathbf L]}$ be arbitrary. We compute:
$$
\begin{array}{@{}r@{\,}l@{\,}l}
f_{\mathbf L}(x_{\alpha S_{[\mathbf H,\mathbf L]}})&=&%
\bigvee \{ \beta(x_{\alpha S_{[\mathbf H,\mathbf L]}}(\beta)) \mid {\beta\in T_{[\mathbf H,\mathbf L]}}\}\\[0.2cm]%
&=&\bigvee \{\beta (x)\mid \alpha \mathrel{S_{[\mathbf H,\mathbf L]}} \beta, {\beta\in T_{[\mathbf H,\mathbf L]}}\}%
\leq \alpha (F(x))\\[0.2cm]%
&=&\bigvee \{\beta(F(x)_{\alpha=}(\beta)) \mid {\beta\in T_{[\mathbf H,\mathbf L]}}\}=%
f_{\mathbf L}(F(x)_{\alpha=}).
\end{array}
$$ 

Therefore $f_{\mathbf L}(x_{\alpha S^{\bullet}}\vee F(x)_{\alpha=})=f_{\mathbf L}(F(x)_{\alpha=})$, which assures 
by  Lemma \ref{lemprenucleus2} the existence of $\psi_{\mathbf L}$ from the theorem. 

Let  $g\colon \mathbf L_1\to \mathbf L_2$  be a morphism of \Vmodule{}s. Let us 
show that the following diagram commutes:
\begin{center}
\begin{tikzpicture}[scale=0.8]
\node (alfa') at (7,5) {${\mathbf J}[\mathbf H, \mathbf L_1]\otimes \mathbf H$};
  \node (beta') at (7,0) {$\mathbf L_1$};
   \node (gama') at (14,0) {$\mathbf L_2$};
   \node (delta') at (14,5) {${\mathbf J}[\mathbf H,\mathbf L_2]\otimes \mathbf H$};
   \node (epsilon') at (3,8) {$\mathbf G^{T_{[\mathbf H,\mathbf L_1]}}$};
   \node (mu') at (18,8) {$\mathbf G^{T_{[\mathbf H,\mathbf L_2]}}$};
   
   \node (1') at (10.5,5.5) {${\mathbf J}[\mathbf H,g]\otimes \mathbf H$};
   \node (2') at (10.5,0.5) {$g$};
   \node (3') at (7.6,2.5) {$\psi_{{\mathbf L}_1}$};
   \node (4') at (13.3,2.5) {$\psi_{\mathbf L_2}$};
    \node (5') at (7.098595,6.6510595) {$\mbox{\rm n}(j[\mathbf J[\mathbf H, \mathbf L_1],{\mathbf H}])$};
       \node (6') at (5.09876,2.5) {$f_{{\mathbf L}_1}$};
    \node (7') at (13.8595,6.6510595) {$\mbox{\rm n}(j[\mathbf J[\mathbf H, \mathbf L_2],{\mathbf H}])$};
           \node (8') at (15.876509876,2.5) {$f_{{\mathbf L}_2}$};
        \node (9') at (10.5,8.5) {${\mathbf J}[\mathbf H,g]^{\rightarrow}$};      
   
  \draw [->](alfa') -- (delta');
  \draw [->](beta') -- (gama');
  \draw [->](delta') -- (gama');
  \draw [->](alfa') -- (beta');
   \draw [->](epsilon') -- (alfa');
      \draw [->](epsilon') -- (beta');
       \draw [->](mu') -- (delta');
   \draw [->](mu') -- (gama');
    \draw [->](epsilon') -- (mu');
 \end{tikzpicture}
 \end{center}

Let $x\in G^{T_{[\mathbf H,\mathbf L_1]}}$. We compute: 
$$
\begin{array}{@{}r@{\,}c@{\,}l}
(\psi_{\mathbf L_2}&\circ& ({\mathbf J}[\mathbf H,g]\otimes \mathbf H)\circ %
\mbox{\rm n}(j[\mathbf J[\mathbf H, \mathbf L_1],{\mathbf H}]))(x)\\[0.2cm]
&=&  %
(\psi_{\mathbf L_2}\circ \mbox{\rm n}(j[\mathbf J[\mathbf H, \mathbf L_2],{\mathbf H}])\circ %
{\mathbf J}[\mathbf H,g]^{\rightarrow})(x) =%
(f_{{\mathbf L}_2}\circ {\mathbf J}[\mathbf H,g]^{\rightarrow})(x)\\[0.2cm]
&=&%
f_{{\mathbf L}_2}((\bigvee\{x(\alpha) \mid g\circ \alpha=\beta, %
\alpha \in T_{[\mathbf H,\mathbf L_1]}\})_{\beta \in T_{[\mathbf H,\mathbf L_2]}})\\[0.2cm]
&=&\bigvee _{\beta \in T_{[\mathbf H,\mathbf L_2]}} %
 \beta((\bigvee\{x(\alpha) \mid g\circ \alpha=\beta, %
\alpha \in T_{[\mathbf H,\mathbf L_1]}\}))\\[0.2cm]
&=&
 \bigvee\{g(\alpha(x(\alpha))) \mid  %
\alpha \in T_{[\mathbf H,\mathbf L_1]}\}=
g(\bigvee\{\alpha(x(\alpha)) \mid  \alpha \in T_{[\mathbf H,\mathbf L_1]}\})\\[0.2cm]
&=&(g\circ f_{{\mathbf L}_1})(x)=(g\circ %
\psi_{\mathbf L_1}\circ \mbox{\rm n}(j[\mathbf J[\mathbf H, \mathbf L_1],{\mathbf H}])(x).
\end{array}
$$ 
Since $\mbox{\rm n}(j[\mathbf J[\mathbf H, \mathbf L_1],{\mathbf H}])$ is surjective 
we have 
$\psi_{\mathbf L_2}\circ ({\mathbf J}[\mathbf H,g]\otimes \mathbf H)=%
g\circ \psi_{\mathbf L_1}$.

(c): Let  $\mathbf J=(T, r)$ be a \Vframe{} and  $\mathbf L$   a  \Vmodule{}.
We will prove the commutativity of following diagrams:

\noindent{}%
\resizebox{\textwidth}{!}{%
\begin{tabular}{@{}c c@{}c}
\begin{tikzpicture}[scale=0.4]

\node (alfa') at (12,0) {$\mathbf  J\otimes \mathbf  H$};
  
   \node (gama') at (0,5) {$\mathbf  J\otimes\mathbf  H$};
   \node (delta') at (12,5) {${\mathbf J}[\mathbf H, \mathbf  J\otimes\mathbf  H]\otimes \mathbf H$};
   
   \node (1') at (5.5,5.7) {$\varphi_{\mathbf  J}\otimes {\mathbf  H}$};
   \node (2') at (13.7,2.7) {$\psi_{\mathbf  J\otimes\mathbf  H}$};
   \node (3') at (6,1.5) {$\mbox{\rm id}_{\mathbf  J\otimes \mathbf  H}$};

  \draw [->](gama') -- (alfa');
  \draw [->](delta') -- (alfa');
  \draw [->](gama') -- (delta');
  
 \end{tikzpicture}
 &&
 \begin{tikzpicture}[scale=0.4]

\node (alfa') at (12,0) {${\mathbf J}[\mathbf H, {\mathbf L}]$};
  
   \node (gama') at (0,5) {${\mathbf J}[\mathbf H, {\mathbf L}]$};
   \node (delta') at (12,5) {${\mathbf J}[\mathbf H, {\mathbf J}[\mathbf H, \mathbf  L]\otimes \mathbf H]$};
   
   \node (1') at (5.6,5.7) {$\varphi_{{\mathbf J}[\mathbf H, {\mathbf L}]}$};
   \node (2') at (14.1099,2.5) {${{\mathbf J}[\mathbf H, \psi_{\mathbf L}]}{}$};
   \node (3') at (6,1.5) {$\mbox{\rm id}_{{\mathbf J}[\mathbf H, {\mathbf L}]}$};

  \draw [->](gama') -- (alfa');
  \draw [->](delta') -- (alfa');
  \draw [->](gama') -- (delta');
  
 \end{tikzpicture}
\end{tabular}%
}

Let $\overline{x}\in  G^T$.  According to  ([3],Theorem 3.6),
we know that the following diagram commutes:

\begin{center}
\begin{tikzpicture}[scale=0.73]
\node (alfa') at (6,5) {$\mathbf G^{T}$};
  \node (beta') at (6,0) {$\mathbf G^{T_{[\mathbf H, \mathbf J\otimes \mathbf H]}}$};
   \node (gama') at (15,0) {${\mathbf J}[\mathbf H, \mathbf J\otimes \mathbf H]\otimes \mathbf H$};
   \node (delta') at (15,5) {$\mathbf J\otimes \mathbf H$};
    \node (deltax') at (22,0) {$\mathbf J\otimes \mathbf H$};
      \node (deltaxz') at (22,-3.5) {$\mathbf J\otimes \mathbf H$};
   
   \node (1') at (10.5,5.5) {$\mbox{\rm n}(j[\mathbf J,\mathbf H])$};
   \node (2') at (10,0.5) {$\mbox{\rm n}(j[{\mathbf J}[\mathbf H, \mathbf J\otimes \mathbf H], %
   \mathbf H])$};
   \node (3') at (6.85,2.5) {$\varphi _{\mathbf J}^{\rightarrow}$};
   \node (4') at (13.9,2.5) {$\varphi _{\mathbf J}\otimes\mathbf  H$};
      \node (5') at (18.5,0.5) {$\psi _{\mathbf J\otimes\mathbf  H}$};
       \node (6') at (20.9,-1.5) {$\mbox{\rm id}_{\mathbf  J\otimes \mathbf  H}$};
        \node (7') at (13.59,-2.85) {$f_{\mathbf J\otimes {\mathbf H}}$};
         \node (8') at (19.9,2.5) {$\mbox{\rm id}_{\mathbf  J\otimes \mathbf  H}$};
   
  \draw [->](alfa') -- (delta');
  \draw [->](beta') -- (gama');
  \draw [->](delta') -- (gama');
  \draw [->](alfa') -- (beta');
    \draw [->](gama') -- (deltax');
     \draw [->](deltax') -- (deltaxz');
        \draw [->](beta') -- (deltaxz');
        \draw[densely dashed,->] (delta') --  (deltax');
 \end{tikzpicture}
 \end{center}
\vskip-0.3cm

We compute:

$$
\begin{array}{@{}r@{\,}c@{\,}l}
(&\psi_{\mathbf J\otimes\mathbf  H}&\circ (\varphi _{\mathbf J}\otimes\mathbf  H)\circ %
\mbox{\rm n}(j[\mathbf J,\mathbf H]))(x)=
(\psi _{\mathbf J\otimes\mathbf  H}\circ%
\mbox{\rm n}(j[{\mathbf J}[\mathbf H, \mathbf J\otimes \mathbf H], %
   \mathbf H])\circ %
\varphi _{\mathbf J}^{\rightarrow})(x)\\[0.2cm]
&\multicolumn{2}{@{}l}{=(f_{\mathbf J\otimes {\mathbf H}}\circ %
\varphi _{\mathbf J}^{\rightarrow})(x)=%
f_{\mathbf J\otimes {\mathbf H}}%
\left(\bigvee\{x(i)\mid \varphi_{\mathbf J}(i)=\alpha\})_{\alpha\in T_{[\mathbf H, %
\mathbf J\otimes \mathbf H]}}\right)}\\[0.2cm]
&\multicolumn{2}{@{}l}{=\bigvee_{\alpha\in T_{[\mathbf H, %
\mathbf J\otimes \mathbf H]}}%
\alpha(\bigvee\{x(i)\mid \varphi_{\mathbf J}(i)=\alpha\})%
=\bigvee_{\alpha\in T_{[\mathbf H, %
\mathbf J\otimes \mathbf H]}}%
\bigvee_{\varphi_{\mathbf J}(i)=\alpha, i\in T}\alpha(x(i))}\\[0.2cm]
&\multicolumn{2}{@{}l}{=\bigvee\{(\varphi_{\mathbf J}(i))(x(i)) \mid 
{i\in T}\}=%
\bigvee\{\mbox{\rm n}(j[\mathbf J,{\mathbf H}])(x(i)_{i=}) \mid 
{ i\in T}\}}\\[0.2cm]
&\multicolumn{2}{@{}l}{=\mbox{\rm n}(j[\mathbf J,{\mathbf H}])(\bigvee\{ x(i)_{i=} \mid  i\in T\})=%
\mbox{\rm n}(j[\mathbf J,{\mathbf H}])(x).}
\end{array}
$$

Hence the first diagram commutes. Now, let  $\alpha \in T_{[\mathbf H,\mathbf L]}$ and  $x\in G$. 
We obtain: 
$$
\begin{array}{@{}r@{\,}l@{\,}l}
(({{\mathbf J}[\mathbf H, \psi_{\mathbf L}]}{} &\circ \varphi_{{\mathbf J}[\mathbf H, {\mathbf L}]})(\alpha))(x)&=%
({{\mathbf J}[\mathbf H, \psi_{\mathbf L}]}{}(\varphi_{{\mathbf J}[\mathbf H, {\mathbf L}]}(\alpha))(x) %
\\[0.2cm]
&\multicolumn{2}{@{}l}{%
=\psi_{\mathbf L}((\varphi_{{\mathbf J}[\mathbf H, {\mathbf L}]}(\alpha))(x))=\psi_{\mathbf L}(\mbox{\rm n}(j[\mathbf J[\mathbf H, {\mathbf L}],{\mathbf H}])(x_{\alpha=}))=%
f_{\mathbf L}(x_{\alpha=})}\\[0.2cm]
&\multicolumn{2}{@{}l}{=\bigvee\{\beta(x_{\alpha=}(\beta)) \mid \beta\in T_{[\mathbf H, {\mathbf L}]}\}=\alpha (x)}
\end{array}$$
which yields the commutativity of the second diagram.
\end{proof}

\begin{theorem} [Generalized version of Theorem 4.3. from \cite{Bot}] 
Let $\mathbf L$ be a \Vmodule{}. Then the following holds:
\begin{enumerate}
\item For an arbitrary \Vframe{} $\mathbf J=(T,r)$, there exists a unique homomorphism of \Vframe{}s 
$\nu_{\mathbf J}\colon \mathbf J\to {\mathbf J}[{\mathbf L}^{\mathbf J}, \mathbf L]$ defined for 
arbitrary $x\in L^T$ and $i\in T$ in such a way that 
$$(\nu_{\mathbf J}(i))(x)=x({i}).$$ 

Moreover,  $\nu=(\nu_{\mathbf J}\colon %
{\mathbf J}\to {\mathbf J}[{\mathbf L}^{\mathbf J}, \mathbf L])_{\mathbf J\in {\mathbb V-J}}$ 
is a natural transformation.\\

\item  For an arbitrary $V$-$F$-sup-semilattice $\mathbf H=(\mathbf G, F)$ there exists a lax morphism 
$\mu_{\mathbf H}\colon {\mathbf H}\to %
\mathbf L^{\mathbf J[\mathbf H, \mathbf L]}$ 
of $V$-$F$-sup-semilattices defined for arbitrary $x\in G$ and $\alpha\in T_{\mathbf J[\mathbf H, \mathbf L]}$ 
by  
$$(\mu_{\mathbf H}(x))(\alpha)=\alpha(x).$$ 

Moreover, 
$\mu=(\mu_{\mathbf H}\colon {\mathbf H}\to %
\mathbf L^{\mathbf J[\mathbf H, \mathbf L]})_{{\mathbf H}\in V-\smFSUPL}$ 
is a natural transformation.\\
\item There exists an adjoint situation 
$(\nu, \mu)\colon {\mathbf  J}[-, \mathbf  L])\dashv {\mathbf  L}^{-}\colon %
\mathbb V-J \to V-{\FSUPL}^{op}$. 
\end{enumerate}
\end{theorem}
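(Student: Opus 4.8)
The statement packages a dual adjunction between the contravariant functors $\mathbf{J}[-,\mathbf{L}]\colon \VFSUPL\to\VJ$ (Theorem~\ref{funSFJrel}) and $\mathbf{L}^{-}\colon \VJ\to\VFSUPL$ (Proposition~\ref{proptJ}). The plan is to treat the three parts in turn: first produce the unit $\nu$ on $\VJ$ and check that it is a well-defined natural transformation into the composite endofunctor $\mathbf{J}[\mathbf{L}^{-},\mathbf{L}]$; then produce the unit $\mu$ on $\VFSUPL$ into $\mathbf{L}^{\mathbf{J}[-,\mathbf{L}]}$; and finally verify the two triangle identities. Throughout, the only nontrivial ingredient is the residuation $a\rightarrow b=\bigvee\{v\in V\mid v*a\leq b\}$, for which I will repeatedly use the adjunction $v*a\leq b \iff v\leq a\rightarrow b$, valid since $*$ preserves joins in each module.

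For part (a), observe that $\nu_{\mathbf{J}}(i)$ is just evaluation at $i$, so it preserves joins and the action pointwise and hence lies in $T_{[\mathbf{L}^{\mathbf{J}},\mathbf{L}]}=\VSUPH(\mathbf{L}^{T},\mathbf{L})$; uniqueness is immediate from the prescribed formula. The substantive step is the \Vframe{}-homomorphism inequality $r(i,j)\leq r'(\nu_{\mathbf{J}}(i),\nu_{\mathbf{J}}(j))$, where $r'(\alpha,\beta)=\bigwedge_{x\in L^{T}}\{\beta(x)\rightarrow\alpha(F^{\mathbf{J}}(x))\}$. By the residuation adjunction this reduces to $r(i,j)*x(j)\leq (F^{\mathbf{J}}(x))(i)$ for every $x$ and $j$, which is immediate from $(F^{\mathbf{J}}(x))(i)=\bigvee_{k}r(i,k)*x(k)\geq r(i,j)*x(j)$ (Definition~\ref{keyFsemiprod}). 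Naturality of $\nu$ in $\mathbf{J}$ follows by direct evaluation: both legs of the square send $i$, tested against $x$, to $x(t(i))$, using $\mathbf{J}[\mathbf{L}^{t},\mathbf{L}](\gamma)=\gamma\circ\mathbf{L}^{t}$ (Theorem~\ref{funSFJrel}) and $(\mathbf{L}^{t}(x))(i)=x(t(i))$ (Proposition~\ref{proptJ}).

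For part (b), $\mu_{\mathbf{H}}(x)$ is the family $\alpha\mapsto\alpha(x)$, so preservation of joins and of the action again holds pointwise because each $\alpha$ is a \Vmodule{} homomorphism. The key point is the lax inequality $F^{\mathbf{J}[\mathbf{H},\mathbf{L}]}(\mu_{\mathbf{H}}(x))\leq\mu_{\mathbf{H}}(F(x))$, which evaluated at $\alpha$ reads $\bigvee_{\beta}r(\alpha,\beta)*\beta(x)\leq\alpha(F(x))$. Here each summand is controlled by the very infimum defining $r$: since $r(\alpha,\beta)\leq\beta(x)\rightarrow\alpha(F(x))$, the residuation adjunction gives $r(\alpha,\beta)*\beta(x)\leq\alpha(F(x))$, so the join is bounded by $\alpha(F(x))$. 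Naturality of $\mu$ is checked as in (a): both legs send $x$, tested at $\alpha$, to $\alpha(f(x))$, using $\mathbf{J}[f,\mathbf{L}](\alpha)=\alpha\circ f$ and $(\mathbf{L}^{s}(y))(\alpha)=y(s(\alpha))$.

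For part (c), the duality amounts to the two triangle identities $\mathbf{L}^{\nu_{\mathbf{J}}}\circ\mu_{\mathbf{L}^{\mathbf{J}}}=\mbox{\rm id}_{\mathbf{L}^{\mathbf{J}}}$ and $\mathbf{J}[\mu_{\mathbf{H}},\mathbf{L}]\circ\nu_{\mathbf{J}[\mathbf{H},\mathbf{L}]}=\mbox{\rm id}_{\mathbf{J}[\mathbf{H},\mathbf{L}]}$. Both are clean unwindings of the definitions: for the first, evaluating at $x\in L^{T}$ and $i\in T$ gives $(\mu_{\mathbf{L}^{\mathbf{J}}}(x))(\nu_{\mathbf{J}}(i))=\nu_{\mathbf{J}}(i)(x)=x(i)$; for the second, $\mathbf{J}[\mu_{\mathbf{H}},\mathbf{L}](\nu_{\mathbf{J}[\mathbf{H},\mathbf{L}]}(\alpha))=\nu_{\mathbf{J}[\mathbf{H},\mathbf{L}]}(\alpha)\circ\mu_{\mathbf{H}}$, which tested at $x$ yields $(\mu_{\mathbf{H}}(x))(\alpha)=\alpha(x)$. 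I expect the main obstacle to be not any single computation but the bookkeeping of contravariance: one must state the triangle identities in the correct dual form and track which functor ($\mathbf{L}^{-}$ or $\mathbf{J}[-,\mathbf{L}]$) reverses arrows at each stage. The genuine quantale content is confined to the two residuation inequalities in (a) and (b); everything else is formal manipulation of evaluation maps.
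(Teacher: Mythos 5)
Your proposal is correct and follows essentially the same route as the paper: the frame-homomorphism inequality for $\nu_{\mathbf J}$ via residuation against $(F^{\mathbf J}(x))(i)\geq r(i,j)*x(j)$, the lax inequality for $\mu_{\mathbf H}$ from $r(\alpha,\beta)\leq\beta(x)\rightarrow\alpha(F(x))$, the naturality squares by pointwise evaluation, and the two triangle identities by unwinding the evaluation maps. If anything, you are slightly more explicit than the paper about membership of $\nu_{\mathbf J}(i)$ and $\mu_{\mathbf H}(x)$ in the relevant hom-sets and about the contravariance bookkeeping in part (c).
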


\begin{proof}
{(a):} Let $i, j\in T$, $x\in L^{T}$. We compute:
\begin{align*}
r'(\nu_{\mathbf J}(i),\nu_{\mathbf J}(j))&=%
\bigwedge_{x \in L^T} \{\nu_{\mathbf J} (j(x))\rightarrow (\nu_{\mathbf J}(i))(F^J(x))\}\\
&=\bigwedge_{x \in L^T} \{x(j)\rightarrow (F^{\mathbf J}(x)(i))\}.
\end{align*}
Since $(F^{\mathbf J}(x))(i)=\bigvee \{r(i,j)*x(j) \mid {j\in T}\}$ we 
conclude that $(F^{\mathbf J}(x))(i)\geq r(i,j)*x(j)$ for all $j\in T$.
Therefore, 
\begin{align*}
r(i,j) \leq \bigwedge_{x \in L^T} \{\nu_{\mathbf J} (j(x))\rightarrow (\nu_{\mathbf J}(i))(F^J(x))\}=r'(\nu_{\mathbf J}(i),\nu_{\mathbf J}(j)).
\end{align*}

Hence $\nu_{\mathbf J}$ is a \Vframe{} homomorphism.

Assume now that $t\colon \mathbf J_1 \to \mathbf J_2$ is a \Vframe{} homomorphism between 
\Vframe{}s $\mathbf J_1=(T_1,r_1)$ and $\mathbf J_2=(T_2,s_2)$. We have to show 
that the following diagram commutes:
\begin{center}
\begin{tikzpicture}

\node (alfa') at (7,5) {$\mathbf J_1$};
  \node (beta') at (7,2) {${\mathbf J}[{\mathbf L}^{\mathbf J_1}, \mathbf L]$};
   \node (gama') at (13,2) {${\mathbf J}[{\mathbf L}^{\mathbf J_2}, \mathbf L]$};
   \node (delta') at (13,5) {$\mathbf J_2$};
   
   \node (1') at (9.5,4.5) {$t$};
   \node (2') at (10.05,2.5) {${\mathbf J}[{\mathbf L}^{t},  \mathbf L]$};
   \node (3') at (7.5483,3.55) {$\nu_{\mathbf J_1}$};
   \node (4') at (12.42,3.55) {$\nu_{\mathbf J_2}$};
   
  \draw [->](alfa') -- (delta');
  \draw [->](beta') -- (gama');
  \draw [->](delta') -- (gama');
  \draw [->](alfa') -- (beta');
 \end{tikzpicture}
\end{center}

Assume that $i\in T_1$ and $x\in  L^{T_2}$. We compute:
$$
\begin{array}{@{}r@{\,}c@{\,}l}
\left(({\mathbf J}[{\mathbf L}^{t},  \mathbf L]\circ \nu_{\mathbf J_1})(i)\right)(x)&=&%
({\mathbf J}[{\mathbf L}^{t},  \mathbf L]( \nu_{\mathbf J_1}(i))(x)=%
( \nu_{\mathbf J_1}(i)\circ {\mathbf L}^{t})(x)=%
\nu_{\mathbf J_1}(i)({\mathbf L}^{t}(x))\\[0.2cm]
&=&%
\nu_{\mathbf J_1}(i)(x\circ t)=(x\circ t)(i)=x(t(i))=%
 (\nu_{\mathbf J_2}(t(i))(x)\\[0.2cm]
&=&%
\left((\nu_{\mathbf J_2}\circ t)(i)\right)(x).\\
\end{array}
$$
Hence ${\mathbf J}[{\mathbf L}^{t},  \mathbf L]\circ \nu_{\mathbf J_1}=%
\nu_{\mathbf J_2}\circ t$.

{(b):}  Evidently, $\mu_{\mathbf H}$ preserves arbitrary joins. We have to verify that 
$F^{{\mathbf J[\mathbf H, \mathbf L]}}\circ \mu_{\mathbf H}\leq \mu_{\mathbf H}\circ F$. 
Let $x\in G$ and $\alpha\in T_{\mathbf J[\mathbf H, \mathbf L]}\}$. We compute: 
$$
\begin{array}{@{}r@{\,}c@{\,}l}
\left((F^{{\mathbf J[\mathbf H, \mathbf L]}}\circ \mu_{\mathbf H})(x)\right)(\alpha)&=&
(F^{{\mathbf J[\mathbf H, \mathbf L]}}(\mu_{\mathbf H}(x)))(\alpha)\\[0.2cm]
&=&%
\bigvee\{(r(\alpha,\beta)*(\mu_{\mathbf H}(x))(\beta) \mid \beta\in T_{\mathbf J[\mathbf H, \mathbf L]}\} \\[0.2cm]
&=&%
\bigvee\{(r(\alpha,\beta)*\beta(x) \mid \beta\in T_{\mathbf J[\mathbf H, \mathbf L]}\} \\[0.2cm]
&\leq&%
\alpha(F(x))=\mu_{\mathbf H}(F(x))(\alpha)=%
\left((\mu_{\mathbf H}\circ F) (x)\right)(\alpha).
\end{array}
$$

Therefore is $\mu_{\mathbf H}$ a lax morphism. 

Now, let us assume that 
$\mathbf H_1=(\mathbf G_1, F_1)$ and  %
$\mathbf H_2=(\mathbf G_2, F_2)$ are $V$-$F$-sup-semilattices, and that 
$f\colon \mathbf H_1\to \mathbf  H_2$ is a  lax morphism of $V$-$F$-sup-semilattices. 
We have to verify that the following diagram commutes:
\begin{center}
\begin{tikzpicture}

\node (alfa') at (7,5) {$\mathbf H_1$};
  \node (beta') at (7,2) {$\mathbf L^{\mathbf J[\mathbf H_1, \mathbf L]}$};
   \node (gama') at (12,2) {$\mathbf L^{\mathbf J[\mathbf H_2, \mathbf L]}$};
   \node (delta') at (12,5) {$\mathbf H_2$};
   
   \node (1') at (9.5,4.5) {$f$};
   \node (2') at (9.5,2.5) {$\mathbf L^{\mathbf J[f, \mathbf L]}$};
   \node (3') at (7.5,3.55) {$\mu_{\mathbf H_1}$};
   \node (4') at (11.5,3.55) {$\mu_{\mathbf H_2}$};
   
  \draw [->](alfa') -- (delta');
  \draw [->](beta') -- (gama');
  \draw [->](delta') -- (gama');
  \draw [->](alfa') -- (beta');
 \end{tikzpicture}
 \end{center}
 Assume that $x\in G_1$ and $\alpha\in T_{[\mathbf H_2, \mathbf L]}$. We compute: 
 
 $$
\begin{array}{@{}r@{}l@{}l}
\left((\mathbf L^{\mathbf J[f, \mathbf L]} \right.&\left.\circ \mu_{\mathbf H_1})(x)\right)(\alpha)&=%
\left(\mathbf L^{\mathbf J[f, \mathbf L]} (\mu_{\mathbf H_1}(x))\right)(\alpha)\\[0.2cm]
&\multicolumn{2}{@{}l}{=%
\left(\mu_{\mathbf H_1}(x) \right)({\mathbf J[f, \mathbf L](\alpha)})=%
({\mathbf J[f, \mathbf L](\alpha)})(x)%
=(\alpha\circ f)(x)}\\[0.2cm]
&\multicolumn{2}{@{}l}{=%
\alpha(f(x))=\left((\mu_{\mathbf H_2}\circ f)(x)\right)(\alpha).}%
\end{array}
$$
hence $\mathbf L^{\mathbf J[f, \mathbf L]} \circ \mu_{\mathbf H_1}=\mu_{\mathbf H_2}\circ f$.

 (c): Let  $\mathbf J=(T, r)$ be a \Vframe{} and  $\mathbf H=(\mathbf G, F)$ a $V$-$F$-sup-semilattice.
We will prove the commutativity of following diagrams:

\begin{center}
\begin{tabular}{@{}c c c}
\begin{tikzpicture}[scale=0.35]

\node (alfa') at (12,0) {$\mathbf  L^{\mathbf  J}$};
  
   \node (gama') at (0,5) {$\mathbf  L^{\mathbf  J}$};
   \node (delta') at (12,5) {${\mathbf L}^{{\mathbf J}%
   [\mathbf L^{\mathbf J}, \mathbf  L]}$};
   
   \node (1') at (5.5,5.7) {$\mu_{{\mathbf  L}^{\mathbf  J}}$};
   \node (2') at (13.7,2.7) {${\mathbf  L}^{\nu_{\mathbf  J}}$};
   \node (3') at (6,1.5) {$\mbox{\rm id}_{\mathbf  L^{\mathbf  J}}$};

  \draw [->](gama') -- (alfa');
  \draw [->](delta') -- (alfa');
  \draw [->](gama') -- (delta');
  
 \end{tikzpicture}
 &\phantom{xx}&
 \begin{tikzpicture}[scale=0.35]

\node (alfa') at (12,0) {${\mathbf J}[\mathbf H, {\mathbf L}]$};
  
   \node (gama') at (0,5) {${\mathbf J}[\mathbf H, {\mathbf L}]$};
   \node (delta') at (12,5) {${\mathbf J}[\mathbf L^{{\mathbf J}[\mathbf H, \mathbf  L]}, \mathbf L]$};
   
   \node (1') at (5.6,5.7) {$\nu_{{\mathbf J}[\mathbf H, {\mathbf L}]}$};
   \node (2') at (14.3099,2.5) {${{\mathbf J}[\mu_{\mathbf H}, {\mathbf L}]}{}$};
   \node (3') at (5.2469,1.5) {$\mbox{\rm id}_{{\mathbf J}[\mathbf H, {\mathbf L}]}$};

  \draw [->](gama') -- (alfa');
  \draw [->](delta') -- (alfa');
  \draw [->](gama') -- (delta');
  
 \end{tikzpicture}
\end{tabular}
\end{center}

Let $x\in L^{T}$ and $i\in T$. We compute: 
 $$
\begin{array}{@{}r@{\,}l@{\,}l}
\left(({\mathbf  L}^{\nu_{\mathbf  J}} \circ \mu_{{\mathbf  L}^{\mathbf  J}})(x)\right)(i)&=&%
\left({\mathbf  L}^{\nu_{\mathbf  J}} (\mu_{{\mathbf  L}^{\mathbf  J}}(x))\right)(i)=%
(\mu_{{\mathbf  L}^{\mathbf  J}}(x))\left({\nu_{\mathbf  J}} (i)\right)=%
\left({\nu_{\mathbf  J}} (i)\right)(x)\\[0.2cm]
&=&%
x(i)=(\mbox{\rm id}_{\mathbf  L^{\mathbf  J}}(x))(i).
\end{array}
$$
Hence ${\mathbf  L}^{\nu_{\mathbf  J}} \circ \mu_{{\mathbf  L}^{\mathbf  J}}=%
\mbox{\rm id}_{\mathbf  L^{\mathbf  J}}$ in $\FSUPL$. Assume that $x\in G$ and $\alpha\in T_{[\mathbf H, \mathbf L]}$. We compute: 
 $$
\begin{array}{@{}r@{\,}c@{\,}l}
\left(({{\mathbf J}[\mu_{\mathbf H}, {\mathbf L}]}{}\right. &\left.\circ\, \, %
\nu_{{\mathbf J}[\mathbf H, {\mathbf L}]})(\alpha)\right)(x)&=%
\left({{\mathbf J}[\mu_{\mathbf H}, {\mathbf L}]}{} (\nu_{{\mathbf J}[\mathbf H, {\mathbf L}]}(\alpha))\right)(x)
\\[0.2cm]
&\multicolumn{2}{@{}l}{=%
(\nu_{{\mathbf J}[\mathbf H, {\mathbf L}]}(\alpha))\left(\mu_{\mathbf H}(x)\right)%
=\left(\mu_{\mathbf H}(x)\right)(\alpha)=%
\alpha(x)=(\mbox{\rm id}_{{\mathbf J}[{\mathbf H}, {\mathbf L}]}(\alpha))(x).}
\end{array}
$$
Therefore ${{\mathbf J}[\mu_{\mathbf H}, {\mathbf L}]}{} \circ %
\nu_{{\mathbf J}[\mathbf H, {\mathbf L}]}=\mbox{\rm id}_{{\mathbf J}[{\mathbf H}, {\mathbf L}]}$. 
\end{proof}

	\section{A prototypical example}\label{Examples}

The purpose of this section is to illustrate the 
adjoint pair $(\nu, \mu)\colon {\mathbf  J}[-, \mathbf  L])$ through a prototypical example.

It is well known that the diamond lattice 
$\mathbf M_3=(\{0, a, b, c, 1\}, \bigvee, \otimes, b)$ is a non-distributive non-commutative unital quantale (see \cite[Nr. 5.2.13]{Eklund})
if it is equipped with the following operations $\bigvee$ and $\otimes$ 
(and the induced operation $\wedge$):
$$
\begin{array}{c c c c c}

\begin{array}{c | c c c c c}
	\otimes& 0 &a & b & c & 1\\ \hline 
    0&	0&	0&	0&	0&	0\\
    a&	0&	0&	a&	a&	a\\
    b&	0&	a&	b&	c&	1\\
    c&	0&	a&	1&	1&	1\\
    1&	0&	a&	1&	1&	1\\
\end{array}& &
\begin{array}{c | c c c c c}
	\bigvee& 0 &a & b & c & 1\\ \hline 
	0&	0&	a & b & c & 1\\
	a&	a&	a&	1&	1&	1\\
	b&	b&	1&	b&	1&	1\\
	c&	c&	1&	1&	c&	1\\
	1&	1&	1&	1&	1&	1\\
\end{array}	& &
\begin{array}{c | c c c c c}
\wedge& 0 &a & b & c & 1\\ \hline 
0&	0&	0 & 0 & 0 & 0\\
a&	0&	a&	0&	0&	a\\
b&	0&	0&	b&	0&	b\\
c&	0&	0&	0&	c&	c\\
1&	0&	a&	b&	c&	1\\
\end{array}	
\end{array}
$$

The chain $\mathbf V=(\{0, b, 1\}, \bigvee_{\mathbf V}, \otimes_{\mathbf V}, b)$ is 
a commutative distributive unital subquantale of $\mathbf M_3$  and it is equipped with the following operations 
$\bigvee_{\mathbf V}$ and $\otimes_{\mathbf V}$ 
(and the induced operation $\wedge_{\mathbf V}$):

$$
\begin{array}{c c c c c}
	
	\begin{array}{c | c c c}
		\otimes_{\mathbf V}& 0 &b & 1\\ \hline 
		0&	0&		0&		0\\
		b&	0&		b&		1\\
		1&	0&		1&		1\\
	\end{array}& &
	\begin{array}{c | c c c }
		\bigvee_{\mathbf V}& 0 & b  & 1\\ \hline 
		0&	0 & b &  1\\
		b&	b&		b&		1\\
		1&	1&		1&		1\\
	\end{array}	& &
	\begin{array}{c | c c c}
		\wedge_{\mathbf V}& 0 & b & 1\\ \hline 
		0&	0&	 0 &  0\\
		b&	0&		b&		b\\
		1&	0&		b&		1\\
	\end{array}	
\end{array}
$$

Both $\mathbf M_3$ and $\mathbf V$ are depicted in  Figure \ref{M3V}.
Evidently, since $\mathbf M_3$ is a quantale we have that 
$\mathbf A=(\{0, a, b, c, 1\}, \bigvee, *_{\mathbf A})$ is a \Vmodule{}, 
here $*_{\mathbf A}$ is a restriction of $\otimes$ to the set 
$V\times M_3$.

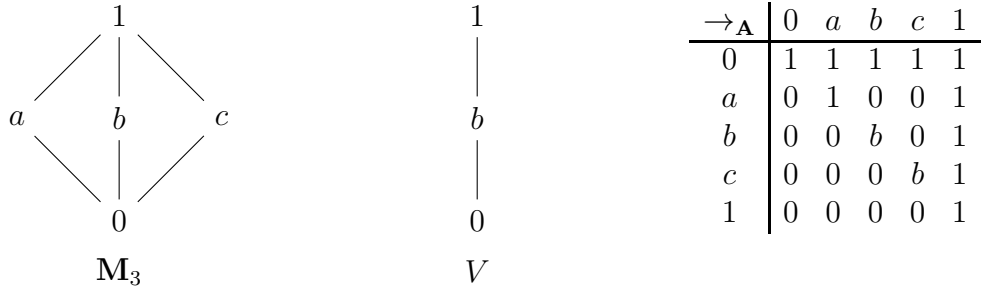
\begin{figure}[h]
\begin{center}
\begin{tikzpicture}[scale=0.68]
		\node (1_2) at (7,6) {$1$};
		\node (b2) at (9,4) {$c$};
		\node (a2) at (5,4) {$a$};
		\node (c2) at (7,4) {$b$};
		\node (0_2) at (7,2) {$0$};
		\node (10_2) at (7,1.0) {${\mathbf M}_3$};
		\draw (1_2) -- (b2) -- (0_2) -- (a2) -- (1_2) -- (c2) -- (0_2);

\node (1_1) at (14,6) {$1$};
		\node (0_1) at (14,2) {$0$};
\node (0_1/2) at (14,4) {$b$};
		\node (100_2) at (14,1.0) {$V$};

\draw (0_1/2) -- (1_1);
		\draw (0_1) -- (0_1/2);

  \node (T_1) at (21,4) {$\begin{array}{c | c c c c c}
\rightarrow_{\mathbf A}&%
    0 & a & b & c & 1\\ \hline 
0&	1&	1 & 1 & 1 & 1\\
a&	0&	1&	0&	0&	1\\
b&	0&	0&	b&	0&	1\\
c&	0&	0&	0&	b&	1\\
1&	0&	0&	0&	0&	1\\
\end{array}	$};
	\end{tikzpicture}
 
 \end{center}
\caption{ $\mathbf M_3$, its subquantale $\mathbf V$ and 
$\rightarrow_{\mathbf A}$ table}\label{M3V}
\end{figure}

We now put $F(x)=a*x$ for all $x\in A$. Then $F$ preserves arbitrary joins and 
$F(u*x)=a*(u*x)=(a*u)*x=(u*a)*x=u*(a*x)=u*F(x)$ for all $u\in \{0,b, c\}$ and 
$x\in A$. 
$$
	\begin{array}{c | c c c c c}
		x& 0 &a & b & c & 1\\ \hline 
		F(x)&	0&	0&	a&	a&	a\\		
	\end{array}
$$

Let ${\mathbf L}=(\{0, 1\}, \bigvee, *_{\mathbf L}\}$ be a \Vsubmodule{} of $\mathbf A$. 
The action $*_{\mathbf L}\colon V \times L \longrightarrow L$ on $L$ and its 
respective adjoint $\rightarrow_{\mathbf L}\colon L \times L \longrightarrow V$ are given by:
$$\begin{array}{c  c c}
\begin{array}{c | c c}
    *_{\mathbf L} & 0 & 1 \\
    \hline
    0 &   0 & 0  \\
    \hline
    b &   0 & 1  \\
    \hline
    1 &   0 & 1  
\end{array}&\phantom{xxxxx} &\begin{array}{c | c c}
    \rightarrow_{\mathbf L} & 0 & 1 \\
    \hline
    0 &   1 & 1  \\
    \hline
    1 &   0 & 1  
\end{array}
\end{array}
$$

Let us define a \Vframe{} ${{\mathbf J}{[{\mathbf A},{\mathbf L}]}}%
=({\mathbb S}({\mathbf A},{\mathbf L}), %
r)$ where $r$ 
is a map defined in Section 3 as $r(\alpha, \beta)=\bigwedge_{x \in G} \{\beta(x) \rightarrow \alpha(F(x))\}$. Clearly, ${\mathbb S}({\mathbf G},{\mathbf L})$ potentially has 8 elements (see [3], Example 5.1), which we will denote $f_i$, where $i\in \{1,2,3,4,5,6,7,8\}$ and their description is given by Table \ref{functfi}.

Every one of these potential morphism also has to satisfy 
$f(v \ast_{\mathbf A}  y) = v \ast_{\mathbf L}  f(y)$ for every $v \in V$ and every $y \in G$. If $v\in \{0,b\}$ we always have that 
$f(v \ast_{\mathbf A}  y) = v \ast_{\mathbf L}  f(y)$. Hence it remains to check whether 
$f(1 \ast_{\mathbf A}  y) = 1 \ast_{\mathbf L}  f(y)=f(y)$. We put 
$\overline{f}_i= f_i\circ (1 \ast_{\mathbf A} -)$.

\begin{table}[h!]
\centering
{\begin{tabular}{l | cccccccc | c |c@{\,\,}c@{\,\,}c}
& $f_1$ &  $f_2$&  $f_3$&  $f_4$&$f_5$ & %
$f_6$ & $f_7$ &$f_8$ &$1 \ast_{\mathbf A}  -$& $f_1\circ F$&%
$f_7\circ F$ & $f_8\circ F$\\[0.01cm] \hline
&  &  &  &  &  &  &  &  &   &  &\\[-0.351cm]
$0$ & 0 & 0 & 0 & 0 & 0 & 0 & 0 & 0&0& 0 & 0&0\\
$a$ & 0 & 0 & 0 & 1 & 1 & 1 & 0 & 1&$a$& 0 & 0&0\\
$b$ & 0 &0  & 1 & 0 &1  &0  & 1 &1 &1& 0 & 0&1\\
$c$ & 0 &1  & 0 & 0 & 0 & 1 & 1 &1 &1& 0 & 0&1\\
  $1$    & 0 & 1 & 1 & 1 & 1 & 1 &1  &1 &1& 0 & 0&1\\[0.01cm] \hline
  &  &  &  &  &  &  &  &  &   &  &\\[-0.351cm]
  & $\overline{f}_1$ &  $\overline{f}_2$&  $\overline{f}_3$&
  $\overline{f}_4$&$\overline{f}_5$ & %
  $\overline{f}_6$ & $\overline{f}_7$ &$\overline{f}_8$ & $F$  &  &\\[0.01cm] \hline
&  &  &  &  &  &  &  &  &   &  &\\[-0.351cm]
$0$ & 0 & 0 & 0 & 0 & 0 & 0 & 0 & 0& 0 &  &\\
$a$ & 0 & 0 & 0 & 1 & 1 & 1 & 0 & 1& 0 &  &\\
$b$ &  0 & \framebox{1} & 1 & \framebox{1} & 1 & \framebox{1} &1  &1 & $a$ &  &\\
$c$  & 0 & 1 & \framebox{1} & \framebox{1} & \framebox{1} & 1 &1  &1 & $a$ &  &\\
  $1$    & 0 & 1 & 1 & 1 & 1 & 1 &1  &1 & $a$ &  &
\end{tabular}}
\caption{Table of possible elements of ${{\mathbf J}{[{\mathbf A},{\mathbf L}]}}$}\label{functfi}
 \end{table}

From  Table \ref{functfi} we see that  $f_1$, $f_7$ and $f_8$ are the only morphisms that actually satisfy this property. For any other $i$ we have marked by framebox a contradiction.

The map $r$ is given by the following table:
$$
\begin{array}{c | c c c}
    r & f_1 & f_7 & f_8 \\
    \hline
    f_1 &1   &0   &0   \\
    \hline
    f_7 &1   &0   &0   \\
    \hline
    f_8 &1   &1   &0   
\end{array}
$$

Namely, for all $i \in\{1,7,8\}$ it holds that
$r(f_i, f_1)=\bigwedge_{x \in A}  \{f_1(x) \rightarrow f_i(F(x))\}=1$
since $f_1(x)=0$  for all $x \in A$.

Since $f_7(1)=f_8(1)=1$ and $f_1(F(1))=f_7(F(1))=0$ we have that 
$r(f_1,f_7)=r(f_1,f_8)=r(f_7,f_7)=r(f_7,f_8)=0$. 

Since $f_8(a)=1$ and $f_8(F(a))=0$ we have that 
$r(f_8,f_8)=0$. 

Since $f_7=f_8\circ F$ we have that $r(f_8,f_7)=1$.

By the previous, there exists a lax morphism 
$\mu _{\mathbf H}\colon{}{\mathbf H} \longrightarrow %
{\mathbf L}^{{\mathbf J}{[{\mathbf H},{\mathbf L}]}}$ of $V$-$F$-sup-semilattices 
defined for arbitrary $x\in A$ and $f\in {\mathbb S}({\mathbf A},{\mathbf L})$ by
$$(\mu _{\mathbf H}(x))(f)=f(x).$$

Let us now compute $\mu _H$ on elements of $A$. It holds that:
$$(\mu _{\mathbf H}(x))(f_1)=f_1(x)=0$$
for all $x \in A$, 
and $(\mu _{\mathbf H}(x))(f_8)=f_8(x)=0$ if $x=0$ and $(\mu _{\mathbf H}(x))(f_8)=f_8(x)=1$ otherwise, and $(\mu _{\mathbf H}(x))(f_7)=f_7(x)=0$ if $x\in \{0,a\}$ and $(\mu _{\mathbf H}(x))(f_7)=f_7(x)=1$ otherwise.

$$
\begin{array}{c | c c c}
     & f_1 & f_7 & f_8 \\
    \hline
    \mu _{\mathbf H}(0) &0   &0   &0   \\
    \hline
    \mu _{\mathbf H}(a) &0   &0   &1   \\
    \hline
    \mu _{\mathbf H}(b) &0   &1  &1   \\
    \hline
    \mu _{\mathbf H}(c) &0   &1   &1   \\
    \hline
    \mu _{\mathbf H}(1) &0   &1   &1   
\end{array}
$$
We see that the lax morphism $\mu _{\mathbf H}$ is not injective and so it is not an embedding.

\section{Conclusion}\label{conclusion}

In this paper, we have introduced three fundamental construction techniques:
\begin{enumerate}
\item Creating an {\VFsemilattice} from a \Vmodule\  and a {\Vframe},
\item Deriving a \Vmodule\  from a {\VFsemilattice} and a {\Vframe},
\item Generating a {\Vframe} from an {\VFsemilattice} and 
a \Vmodule{}.
\end{enumerate}

Through these constructions, we have established three corresponding adjoint situations between the relevant categories. This framework provides a unified perspective on recent theoretical developments regarding tense operator representations across various categories.

\section*{Acknowledgments.} The first author acknowledges support from the Czech Science Foundation (GAČR) project 23-09731L ``Representations of algebraic semantics for substructural logics''. The second and third author were supported by the Austrian Science Fund (FWF) [10.55776/PIN5424624] and the Czech Science Foundation (GAČR) project 25-20013L ``Orthogonality and Symmetry''. The third author acknowledges support from the Masaryk University project MUNI/A/1457/2023.

\end{document}